\definecolor{bittersweet}{rgb}{1.0, 0.44, 0.37}
\newcommand{\chgdarm}[1]{{ #1}}
\newcommand{\chgdt}[1]{{ #1}}
\title[Bourgain--Brezis--Mironescu Convergence result and Triebel-Lizorkin Spaces]{Bourgain--Brezis--Mironescu Convergence via Triebel-Lizorkin Spaces}
 \author{Denis Brazke}
 \address[Denis Brazke]{Department of Mathematics,
 University of Heidelberg,
 Im Neuenheimer Feld 205,
 69120 Heidelberg, Germany}
 \email{denis.brazke@uni-heidelberg.de}
 \author[Armin Schikorra]{Armin Schikorra}
 \address[Armin Schikorra]{Department of Mathematics,
 University of Pittsburgh,
 301 Thackeray Hall,
 Pittsburgh, PA 15260, USA}
 \email{armin@pitt.edu}
 \author[Po-Lam Yung]{Po-Lam Yung}
 \address[Po-Lam Yung]{Mathematical Sciences Institute, The Australian National University, Canberra, Australia}
 \email{polam.yung@anu.edu.au}
\def\eps{\varepsilon}
\def\N{{\mathbb N}}
\def\S{{\mathbb S}}
\newtheorem{theorem}{Theorem}
\newtheorem{lemma}[theorem]{Lemma}
\newtheorem{corollary}[theorem]{Corollary}
\newtheorem{proposition}[theorem]{Proposition}
\theoremstyle{definition}
\newtheorem{remark}[theorem]{Remark}
\newtheorem{definition}[theorem]{Definition}
\newtheorem{question}[theorem]{Question}
\def\dist{{\rm dist\,}}
\def\lip{{\rm Lip\,}}
\def\supp{{\rm supp\,}}
\newcommand{\R}{\mathbb{R}}
\newcommand{\Z}{\mathbb{Z}}
\newcommand{\brac}[1]{\left (#1 \right )}
\newcommand{\abs}[1]{\left |#1 \right |}
\newcommand{\Sw}{\mathscr{S}}
\newcommand{\barint}{
\rule[.036in]{.12in}{.009in}\kern-.16in \displaystyle\int }
\newcommand{\barcal}{\mbox{$ \rule[.036in]{.11in}{.007in}\kern-.128in\int $}}
\def\mvint_#1{\mathchoice
          {\mathop{\vrule width 6pt height 3 pt depth -2.5pt
                  \kern -8pt \intop}\nolimits_{\kern -3pt #1}}%
%%%% P.S., 01/03/2001
% old definition had ...\nolimits_{#1}}
% \kern -3pt makes nicer distances between the integral sign
% and the domain of integration
%%%%
          {\mathop{\vrule width 5pt height 3 pt depth -2.6pt
                  \kern -6pt \intop}\nolimits_{#1}}%
          {\mathop{\vrule width 5pt height 3 pt depth -2.6pt
                  \kern -6pt \intop}\nolimits_{#1}}%
          {\mathop{\vrule width 5pt height 3 pt depth -2.6pt
                  \kern -6pt \intop}\nolimits_{#1}}}
\numberwithin{theorem}{section} \numberwithin{equation}{section}
\newcommand{\lap}{\Delta }
\newcommand{\aleq}{\lesssim}
\newcommand{\ageq}{\gtrsim}
\newcommand{\aeq}{\approx}
\newcommand{\Rz}{\mathcal{R}}
\newcommand{\laps}[1]{(-\lap) ^{\frac{#1}{2}}}
\begin{document}
\begin{abstract}
We study a convergence result of Bourgain--Brezis--Mironescu (BBM) using Triebel-Lizorkin spaces. It is well known that as spaces $W^{s,p} = F^{s}_{p,p}$, and $H^{1,p} = F^{1}_{p,2}$. When $s\to 1$, the $F^{s}_{p,p}$ norm becomes the $F^{1}_{p,p}$ norm but BBM showed that the $W^{s,p}$ norm becomes the $H^{1,p} = F^{1}_{p,2}$ norm. Naively, for $p \neq 2$ this seems like a contradiction, but we resolve this by providing embeddings of $W^{s,p}$ into $F^{s}_{p,q}$ for $q \in \{p,2\}$ with sharp constants with respect to $s \in (0,1)$. As a consequence we obtain an $\mathbb{R}^N$-version of the BBM-result, and obtain several more embedding and convergence theorems of BBM-type that to the best of our knowledge are unknown. 
\end{abstract}

\maketitle
%\tableofcontents
%\listoftodos

\section{Introduction and Main Results}

\subsection{{Previous results}}
For $s \in (0,1)$, $p \in (1,\infty)$ and an open set $\Omega \subset \R^N$ the ${\dot{W}}^{s,p}$-Gagliardo-seminorm is defined as
\[
 [f]_{\dot W^{s,p}(\Omega)} = \brac{\int_{\Omega}\int_{\Omega} \frac{|f(x)-f(y)|^p}{|x-y|^{N+sp}}\, dx\, dy}^{\frac{1}{p}} \equiv \left \|\frac{f(x)-f(y)}{|x-y|^{\frac{N}{p}+s}}\right \|_{L^p(\Omega \times \Omega)}.
\]
For $s = 1$ we denote the usual ${\dot{H}}^{1,p}$-Sobolev space seminorm by
\[
 [f]_{\dot H^{1,p}(\Omega)} = \|\nabla f\|_{L^p(\Omega)}
\]
{and write $H^{1,p}$ for the inhomogeneous Sobolev space so that
\[
H^{1,p}(\Omega) := \{f \in L^p(\Omega) : \nabla f \in L^p(\Omega)\}.
\]}
%$\|f\|_{H^{1,p}(\Omega)} := \|f\|_{L^p(\Omega)} + [f]_{\dot{H}^{1,p}(\Omega)}$.}

In the influential paper \cite{BBM01} Bourgain--Brezis--Mironescu {showed} %computed
that for {any smooth bounded domain $\Omega \subset \R^N$ and} any $f \in H^{1,p}(\Omega)$ we have
\begin{equation}\label{eq:BBMglobaleq}
 \tag{BBM1}  \|\nabla f\|_{L^p(\Omega)}  = \left( \frac{p}{k(p,N)} \right)^{1/p} \lim_{s \to 1^-} (1-s)^{\frac{1}{p}} [f]_{\dot W^{s,p}(\Omega)},
\end{equation}
where $k(p,N) := \int_{\S^{N-1}} |e \cdot \omega|^p d\omega$ and $e$ is any unit vector in $\R^N$. 
%More precisely, \eqref{eq:BBMglobaleq} follows relatively easily from the arguments in \cite{BBM01}, but is not explicitly stated; we explain the proof in \Cref{s:bbmglobaleq}.
Even more crucially, Bourgain--Brezis--Mironescu {established} the following convergence result.
\begin{theorem}[Bourgain--Brezis--Mironescu \cite{BBM01}]\label{th:bbm}
Let $\Omega \subset \R^N$ be open and bounded with smooth boundary, and $p \in (1,\infty)$. 
\begin{itemize}
\item[(BBM2)] Assume that $f_k \in C_c^\infty(\Omega)$ such that 
\[
 f_k \rightharpoonup f \quad \text{weakly in $L^p(\Omega)$ as $k \to \infty$}.
\]
Let $(s_k)_{k \in \N} \subset (0,1)$ such that $s_k \uparrow 1$ and assume that
\[
\Lambda := \sup_{k} \brac{\|f_k\|_{L^p(\Omega)} + (1-s_k)^{\frac{1}{p}} [f_k]_{\dot W^{s_k,p}(\Omega)}} < \infty.
\]
Then $f \in H^{1,p}(\Omega)$ and we have 
\[
 \|f\|_{L^p(\Omega)} + \|\nabla f\|_{L^p(\Omega)} \leq C\, 
%  \lim_{\tilde{t} \uparrow 1} \liminf_{k \to \infty} [f_k]_{H^{\tilde{t},p}(\R^N)} \leq 
 \Lambda.
\]
The constant $C$ depends only $p$ and $N$. Also, $f_k \xrightarrow{k \to \infty} f$ strongly in $L^p_{loc}(\Omega)$.
\end{itemize}
\end{theorem}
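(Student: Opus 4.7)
The plan is to convert the hypothesis into a uniform bound on $\R^N$ via zero-extension, apply the paper's sharp embedding $W^{s,p} \hookrightarrow F^s_{p,2}$ (as advertised in the abstract), and pass to the limit $s_k \to 1^-$ with a Littlewood--Paley argument combined with the identification $\dot F^{1}_{p,2} = \dot H^{1,p}$.

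First I would extend each $f_k$ by zero to $\tilde f_k \in L^p(\R^N)$ and split
\[
[\tilde f_k]_{\dot W^{s_k,p}(\R^N)}^p = [f_k]_{\dot W^{s_k,p}(\Omega)}^p + 2 \int_\Omega \int_{\R^N \setminus \Omega} \frac{|f_k(x)|^p}{|x-y|^{N+s_k p}}\, dy\, dx.
\]
The cross-term is at most $\frac{C_N}{s_k p}\int_\Omega |f_k(x)|^p \dist(x,\partial\Omega)^{-s_k p}\,dx$, which one controls via a fractional Hardy-type inequality for functions in $C_c^\infty(\Omega)$ on the smooth bounded domain $\Omega$ with constant growing at worst like $(1-s_k)^{-1}$. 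Multiplying by $(1-s_k)^{1/p}$ absorbs this factor, yielding
\[
(1-s_k)^{1/p}[\tilde f_k]_{\dot W^{s_k,p}(\R^N)} \leq C(\Omega,N,p)\,\Lambda
\]
uniformly in $k$. The sharp embedding $W^{s,p} \hookrightarrow F^s_{p,2}$ then gives $\|\tilde f_k\|_{\dot F^{s_k}_{p,2}(\R^N)} \leq C\Lambda$ uniformly.

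For the passage to the limit, $\tilde f_k \rightharpoonup \tilde f := f\,\mathbf{1}_\Omega$ weakly in $L^p(\R^N)$ (since every $\psi \in L^{p'}(\R^N)$ restricts to an $L^{p'}(\Omega)$ test function), so $\Delta_j \tilde f_k \rightharpoonup \Delta_j \tilde f$ weakly in $L^p$ for each Littlewood--Paley block $\Delta_j$. Fix $J \in \N$; for $|j|\leq J$ and $s_k \in (0,1)$ one has $2^{2j} \leq 2^{2J(1-s_k)}2^{2js_k}$, so pointwise
\[
\sum_{|j|\leq J}2^{2j}|\Delta_j \tilde f_k|^2 \leq 2^{2J(1-s_k)}\sum_{j}2^{2js_k}|\Delta_j \tilde f_k|^2,
\]
and taking $L^{p/2}$-norms yields
\[
\left\|\left(\sum_{|j|\leq J}2^{2j}|\Delta_j \tilde f_k|^2\right)^{1/2}\right\|_{L^p(\R^N)} \leq 2^{J(1-s_k)}\|\tilde f_k\|_{\dot F^{s_k}_{p,2}(\R^N)} \leq 2^{J(1-s_k)}\,C\Lambda.
\]
The left-hand side is a weakly lower semicontinuous functional of $\tilde f_k$ (the finite family $(\Delta_{-J}g,\ldots,\Delta_J g)$ depends $L^p$-$L^p$-continuously on $g$, and the truncated square function is a norm on this band), so sending $k \to \infty$ and using $2^{J(1-s_k)} \to 1$ gives the same bound for $\tilde f$. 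Letting $J \to \infty$ by monotone convergence yields $\|\tilde f\|_{\dot F^1_{p,2}(\R^N)} \leq C\Lambda$, which under $\dot F^1_{p,2}=\dot H^{1,p}$ is the desired $\|\nabla f\|_{L^p(\Omega)} \leq C\Lambda$ bound. Strong $L^p_{loc}(\Omega)$-convergence then follows by Rellich--Kondrachov applied to cutoffs of the $f_k$, whose uniform $W^{s_0,p}$-bounds on compacts (for any fixed $s_0 \in (0,1)$) are a consequence of the uniform $\dot F^{s_k}_{p,2}(\R^N)$-bound.

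The main obstacle is the extension step: naive tail estimates on the cross-term give a fractional Hardy constant that blows up as $s_k \to 1^-$, so one must either invoke a refined Hardy inequality on smooth bounded domains with quantitative $s$-dependence, or localize near $\partial\Omega$ via boundary charts and cutoffs to sidestep the global extension. The Littlewood--Paley step is otherwise routine but does hinge on the truncation trick above, because the high-frequency weights $2^{2js_k}$ are strictly smaller than $2^{2j}$ for $s_k < 1$, preventing any direct comparison between $\|\cdot\|_{\dot F^{s_k}_{p,2}}$ and $\|\cdot\|_{\dot F^1_{p,2}}$.
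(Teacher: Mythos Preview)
The paper does not give its own proof of Theorem~\ref{th:bbm}; the result is quoted from \cite{BBM01}. What the paper proves is the $\R^N$-analog, Corollary~\ref{co:strongconv}, and remarks that this ``recovers'' (BBM2). Your strategy of extending by zero and feeding the extensions into the paper's $\R^N$-machinery is therefore reasonable in spirit, but there are two genuine gaps.

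The more serious one is the embedding step. The inequality you invoke, $[f]_{\dot F^{s}_{p,2}} \lesssim (1-s)^{1/p}[f]_{\dot W^{s,p}}$, is established in the paper only for $1 < p \le 2$; this is \eqref{eq:mainFsp2:1} in Theorem~\ref{th:mainFsp2}. For $p > 2$ the inclusion $\dot W^{s,p} = \dot F^s_{p,p} \hookrightarrow \dot F^s_{p,2}$ is simply false (the $\ell^2 \hookrightarrow \ell^p$ embedding goes the wrong way), and indeed the paper only proves the reverse direction \eqref{eq:mainFsp2:2} in that range. The paper's substitute, which works for all $p \in (1,\infty)$, is the Sobolev-type estimate \eqref{eq:sobolev2} of Theorem~\ref{th:sobolev}: for $r$ with $1-r \ge 2(1-s)$ one has $[f]_{\dot F^r_{p,2}} \lesssim \|f\|_{L^p} + (1-s)^{1/p}[f]_{\dot W^{s,p}}$. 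This yields a uniform $\dot F^r_{p,2}$-bound on the $f_k$ for each fixed $r<1$ once $k$ is large, and one then passes to $\dot F^1_{p,2}$ via Lemma~\ref{la:triebelconv}, exactly as in the proof of Corollary~\ref{co:strongconv}. Your truncation trick with the factor $2^{J(1-s_k)}$ is a correct alternative to Lemma~\ref{la:triebelconv}, but it does not repair the missing embedding when $p>2$.

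Second, the extension arithmetic does not close as written. If the fractional Hardy constant in $\int_\Omega |f_k|^p \dist(\cdot,\partial\Omega)^{-s_kp}\,dx \le C(s_k)\,[f_k]_{\dot W^{s_k,p}(\Omega)}^p$ really grew like $(1-s_k)^{-1}$, then after multiplying by $(1-s_k)$ you are left with $C\,[f_k]_{\dot W^{s_k,p}(\Omega)}^p$, which is \emph{not} controlled by $\Lambda^p$; only $(1-s_k)[f_k]_{\dot W^{s_k,p}(\Omega)}^p$ is. You actually need $C(s_k)=O(1)$ as $s_k\to 1$, which is true on smooth bounded domains (and consistent with the classical Hardy inequality in the limit), but is a stronger input than you stated. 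Note also that any such Hardy step makes the final constant depend on $\Omega$, whereas the theorem claims $C=C(p,N)$; the paper's route through Corollary~\ref{co:strongconv} on $\R^N$ avoids this, and the $\Omega$-independent constant in the bounded-domain statement really comes from the original argument in \cite{BBM01}.
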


See also \cite{BPS16,Ponce04,dTGCV20} for related results, \cite{Milman1,Milman2} for an interpretation via interpolation space, and \cite{Mazya1,Mazya2} for the regime $s \to 0$.

\subsection{{Questions on $\R^N$}}

{In this paper, we explore what happens when the bounded domain $\Omega$ above is replaced by the whole space $\R^N$. It is relatively easy to show that \eqref{eq:BBMglobaleq} holds with $\Omega$ replaced by $\R^N$; we provide a short proof in \Cref{s:bbmglobaleq}. Our main result will be an analog of \Cref{th:bbm} on $\R^N$. In fact,}
from the point of view of Harmonic Analysis, \Cref{th:bbm} seems like a surprising result, as we shall explain here. Denote the {homogeneous} Triebel-Lizorkin norm $[\, \cdot \,]_{\dot{F}^{s}_{p,p}(\R^N)}$ by
\[
 [f]_{\dot{F}^{s}_{p,p}(\R^N)} = \brac{\int_{\R^N} \sum_{j \in \Z} 2^{sjp}|\Delta_j f(x)|^p \ dx}^{\frac{1}{p}}.
\]
Here $\Delta_j f$ are the Littlewood-Paley projections {(see \Cref{s:triebelprelim} for their definitions).}
%, which we introduce below in \Cref{s:triebelprelim}, for a more detailed overview we refer the reader to \cite{RS96}. 
It is well-known that for $s \in (0,1)$, $p \in (1,\infty)$,
\[
 [f]_{\dot{F}^{s}_{p,p}(\R^N)} \aeq [f]_{\dot W^{s,p}(\R^N)},
\]
whenever $f \in \Sw(\R^N)$, where $\Sw(\R^N)$ denotes the set of Schwartz functions on $\R^N$.
However, since $\|f\|_{L^p(\R^N)} \aeq \|f\|_{\dot{F}^{0}_{p,2}}$ we have 
\[
 [f]_{\dot{F}^{1}_{p,2}(\R^N)} \aeq \|\nabla f\|_{L^p(\R^N)}.
\]
From the definition of Triebel-Lizorkin spaces, it easily follows (cf. \Cref{la:triebelconv})
\[
 \lim_{s \to 1} [f]_{\dot{F}^{s}_{p,p}(\R^N)}=[f]_{\dot{F}^{1}_{p,p}(\R^N)}.
\]
So {if \Cref{th:bbm} holds true on $\R^N$, it then}
%the Bourgain--Brezis--Mironescu result 
seems to suggest that in some way $\dot{W}^{s,p} \aeq_{s,p} \dot{F}^{s}_{p,p}$ ``converges to'' $\dot{H}^{1,p} \aeq_{p} \dot{F}^{1}_{p,2}$, which appears \chgdarm{to be} a contradiction to the above, because for $p \neq 2$ we have that $\dot{F}^{1}_{p,2} \not \aeq \dot{F}^{1}_{p,p}$. These statements, of course, do not make any sense, because spaces do not converge, but norms. The aim of this note is to clarify the effects we are seeing here{{, which we achieve by clarifying various relationships between the $\dot{W}^{s,p}$, $\dot{F}^s_{p,p}$ and $\dot{F}^s_{p,2}$ seminorms for $0 < s < 1$}.

\subsection{{Results about $\dot{F}^s_{p,p}$}}
Our first main theorem is the following {quantitative comparison between the $\dot{W}^{s,p}$ and the $\dot{F}^s_{p,p}$ seminorms}. %$\dot F^{s}_{p,p}$-characterization.

\begin{theorem}\label{th:mainFspp}
Let $N \ge 1$, $p \in (1,\infty)$. Then there exists $C = C(N,p) > 0$, such that for every $s \in (0,1)$ and $f \in \Sw(\R^N)$,
%For every $N \in \N$ there exists a constant $C=C(N,p)$ so that for all $f \in \Sw(\R^N)$, $s \in (0,1)$, $p \in (1,\infty)$, one has
\begin{enumerate}
 \item if $1 < p \leq 2$:
\begin{equation}\label{eq:Fspppleq2}
 C^{-1} \brac{\frac{1}{s^{\frac{1}{2}}} + \frac{1}{(1-s)^{\frac{1}{2}}}}\, [f]_{\dot{F}^{s}_{p,p}(\R^N)} \leq [f]_{\dot W^{s,p}(\R^N)} \leq C \brac{\frac{1}{s^{\frac{1}{p}}} + \frac{1}{(1-s)^{\frac{1}{p}}}}\, [f]_{\dot{F}^{s}_{p,p}(\R^N)}.
\end{equation}
\item if $2 \leq p < \infty$:
\begin{equation}\label{eq:Fspppgeq2}
 C^{-1} \brac{\frac{1}{s^{\frac{1}{p}}} + \frac{1}{(1-s)^{\frac{1}{p}}}}\, [f]_{\dot{F}^{s}_{p,p}(\R^N)} \leq [f]_{\dot W^{s,p}(\R^N)} \leq C \brac{\frac{1}{s^{\frac{1}{2}}} + \frac{1}{(1-s)^{\frac{1}{2}}}}\, [f]_{\dot{F}^{s}_{p,p}(\R^N)}.
\end{equation}
\end{enumerate}
\end{theorem}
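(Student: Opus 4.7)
The plan is to establish the four inequalities in \eqref{eq:Fspppleq2}--\eqref{eq:Fspppgeq2} by Littlewood--Paley analysis, alternating between the $\ell^p$- and $\ell^2$-flavours of the $L^p$-characterization. Passing to polar coordinates in $h$,
\[
[f]_{\dot W^{s,p}(\R^N)}^p = \int_{\R^N}|h|^{-N-sp}\|D_h f\|_{L^p}^p\,dh,\qquad D_h f(x):=f(x+h)-f(x),
\]
and decomposing $D_h f = \sum_{j\in\Z} D_h\Delta_j f$, the building block is the per-piece bound $\|D_h\Delta_j f\|_{L^p}\lesssim \min(1, 2^j|h|)\,\|\tilde\Delta_j f\|_{L^p}$, obtained from Bernstein's inequality when $2^j|h|\leq 1$ and from the triangle inequality when $2^j|h|>1$, where $\tilde\Delta_j$ is a fattened projection. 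A pointwise analogue $|D_h\Delta_j f(x)|\lesssim\min(1,2^j|h|)\,M^*(\Delta_j f)(x)$ with $M^*$ a shifted maximal function is also needed.

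For the \emph{upper bound with $1/p$-exponent} (right side of \eqref{eq:Fspppleq2}, $p\leq 2$), I combine the $L^p$-square function characterization with the embedding $\ell^p\hookrightarrow\ell^2$ (valid for $p\leq 2$) to get $\|D_h f\|_{L^p}\lesssim (\sum_j\|D_h\Delta_j f\|_{L^p}^p)^{1/p}$. Inserting the per-piece bound and computing
\[
\int_0^\infty r^{-sp-1}\min(1,2^j r)^p\,dr = 2^{sjp}\Big(\tfrac{1}{ps}+\tfrac{1}{p(1-s)}\Big)
\]
gives the $(s^{-1/p}+(1-s)^{-1/p})$ factor after the $1/p$-root.

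For the \emph{upper bound with $1/2$-exponent} (right side of \eqref{eq:Fspppgeq2}, $p\geq 2$, the sharp direction) I use $\|D_h f\|_{L^p}\asymp\|(\sum_j|D_h\Delta_j f|^2)^{1/2}\|_{L^p}$ together with the pointwise $M^*$-bound and the vector-valued Fefferman--Stein inequality, reducing for each fixed $x$ to bounding
\[
\int_0^\infty r^{-sp-1}\Big(\sum_j\min(1,2^j r)^2|\Delta_j f(x)|^2\Big)^{p/2}\,dr.
\]
Discretising $r=2^{-k}$ and setting $c_j = 2^{sj}|\Delta_j f(x)|$, the integrand becomes $(c^2 * w)(k)^{p/2}$ where $w(m)=2^{-2sm}$ for $m\geq 0$ and $w(m) = 2^{2(1-s)m}$ for $m < 0$, with $\|w\|_{\ell^1}\asymp s^{-1}+(1-s)^{-1}$. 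Young's inequality $\ell^1 * \ell^{p/2}\hookrightarrow \ell^{p/2}$ (available since $p/2\geq 1$) then produces $(s^{-1}+(1-s)^{-1})^{p/2}$, which becomes $(s^{-1/2}+(1-s)^{-1/2})$ after the final $1/p$-root.

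For the \emph{lower bounds} (left sides of \eqref{eq:Fspppleq2}--\eqref{eq:Fspppgeq2}), the same ideas run in reverse. One starts from the reproducing formula $\Delta_j f(x) = \int \phi(u)[f(x-2^{-j}u) - f(x)]\,du$ (available since $\int\phi = 0$); combined with Jensen's inequality against $|\phi(u)|\,du$ and the dual form of the appropriate embedding ($\ell^2\hookrightarrow\ell^p$ when $p\geq 2$, or the Minkowski inequality $\|(\sum|g_j|^2)^{1/2}\|_{L^p}\leq(\sum\|g_j\|_{L^p}^2)^{1/2}$ when $p\leq 2$), together with a dual form of the Young's-inequality convolution analysis on $\Z$, this yields the required lower bounds with the sharp $s$-dependent constants $(s^{-1/p}+(1-s)^{-1/p})$ and $(s^{-1/2}+(1-s)^{-1/2})$ respectively. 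The main obstacle I anticipate is maintaining uniformity of all constants in $s \in (0,1)$: in particular, the shifted maximal function $M^*$ must be chosen so the vector-valued Fefferman--Stein inequality has $s$-independent constants, and the convolution kernel $w$ on $\Z$ must be tracked carefully through both the Young and dual-Young steps so that the sharp $\tfrac{1}{2}$ versus $\tfrac{1}{p}$ dichotomy is preserved in each regime.
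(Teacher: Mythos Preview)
Your upper-bound arguments are essentially correct and close in spirit to the paper's proof. The paper also starts from $\|D_h f\|_{L^p}$, applies Littlewood--Paley, and splits into high/low frequencies relative to the scale of $h$; its treatment of the $p\geq 2$ case uses Minkowski in $\ell^{p/2}(L^{p/2})$ rather than your Young's-inequality packaging, but the two are equivalent here. One technical point: the paper avoids the shifted maximal function entirely by working at the $L^p$ level (translation invariance for the high-frequency piece, fundamental theorem of calculus plus Minkowski for the low-frequency piece), which is cleaner than tracking a pointwise $M^*$ whose shift depends on the integration variable~$h$.

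The real gap is in the lower bounds. The direct route you sketch---reproducing formula $\Delta_j f(x)=\int\phi(u)\,D_{-2^{-j}u}f(x)\,du$, Jensen against $|\phi(u)|\,du$, then some ``dual'' embedding---only produces
\[
[f]_{\dot F^{s}_{p,p}}^p \;\lesssim\; \int_{\R^N}\|D_h f\|_{L^p}^p\,K(h)\,dh,\qquad K(h)=\sum_{j}2^{j(sp+N)}|\phi(2^{j}h)|,
\]
and a straightforward estimate of $K(h)$ using the Schwartz decay of $\phi$ gives $K(h)\lesssim |h|^{-N-sp}$ with constant \emph{uniform} in $s$. So you recover $[f]_{\dot F^{s}_{p,p}}\lesssim [f]_{\dot W^{s,p}}$, but with no factor of $\min\{s,1-s\}^{1/p}$ or $\min\{s,1-s\}^{1/2}$. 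The phrase ``dual form of the Young's-inequality convolution analysis'' does not supply the missing gain: Young's inequality has no reverse form of the type you would need here.

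The paper obtains the sharp lower bounds by a genuinely different mechanism: duality plus the integral representation of the fractional Laplacian. One writes $[f]_{\dot F^{s}_{p,q}}\lesssim\big|\int(-\Delta)^{s}f\cdot g\big|$ for some $g$ with $[g]_{\dot F^{s}_{p',q'}}\leq 1$, then uses
\[
\int(-\Delta)^{s}f\cdot g \;=\; c_{N,s}\int\!\!\int\frac{(f(x+z)-f(x))(g(x+z)-g(x))}{|z|^{N+2s}}\,dx\,dz,
\]
where the crucial point is that $c_{N,s}\approx\min\{s,1-s\}$. H\"older then gives $\min\{s,1-s\}\,[f]_{\dot W^{s,p}}\,[g]_{\dot W^{s,p'}}$, and the already-proven \emph{upper} bound, applied to $g$ with exponent $p'$, closes the argument with exactly the constants claimed. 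This $c_{N,s}$ factor is where the sharp $s$-dependence of the lower bounds originates; it is absent from your sketch.
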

The upper bounds in \eqref{eq:Fspppleq2} and \eqref{eq:Fspppgeq2} have been proven by Gu and the third author in \cite{GY21}.

As an immediate corollary we obtain the following Sobolev-type inequality for $p=2$. It is well-known and elementary to show that
\[%begin{equation}\label{eq:Ws2bdd}
[f]_{\dot W^{s,2}(\R^N)} \leq C_{s,t} \brac{\|f\|_{L^2(\R^N)} + [f]_{\dot W^{t,2}(\R^N)}}, \quad  \text{for $0 < s \leq t < 1$}.
\]
The main nontriviality in the corollary below is the prefactor $\min\{s,(1-s)\}^{\frac{1}{2}}$ on the left-hand side and $\min\{t,(1-t)\}^{\frac{1}{2}}$ on the right-hand side. We do not know if a similar statement is true for any $p \in (1,\infty)$, see \Cref{q:sharpsob}.
\begin{corollary}\label{co:sharpshob2}
{Let $N \geq 1$. Then there exists $C = C(N) > 0$, such that for all }
%Let 
$0 < s \leq t <1$ and $f \in \Sw(\R^N)$,
%. Then there exists $C = C(N) > 0$, such that
\[
 \min\{s,(1-s)\}^{\frac{1}{2}} [f]_{\dot W^{s,2}(\R^N)} \leq C \brac{
 \|f\|_{L^2(\R^N)} + \min\{t,(1-t)\}^{\frac{1}{2}} [f]_{\dot W^{t,2}(\R^N)}}.
\]
\end{corollary}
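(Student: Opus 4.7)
The plan is to reduce the statement to a comparison between Triebel-Lizorkin seminorms via \Cref{th:mainFspp} at $p=2$. In that case both \eqref{eq:Fspppleq2} and \eqref{eq:Fspppgeq2} collapse to the two-sided bound
\[
 C^{-1}\brac{\frac{1}{s^{\frac12}}+\frac{1}{(1-s)^{\frac12}}}\,[f]_{\dot F^{s}_{2,2}(\R^N)} \leq [f]_{\dot W^{s,2}(\R^N)} \leq C\brac{\frac{1}{s^{\frac12}}+\frac{1}{(1-s)^{\frac12}}}\,[f]_{\dot F^{s}_{2,2}(\R^N)},
\]
and since $\frac{1}{s^{1/2}}+\frac{1}{(1-s)^{1/2}} \aeq \min\{s,1-s\}^{-\frac12}$ uniformly in $s \in (0,1)$, this rearranges to the equivalence
\[
 \min\{s,1-s\}^{\frac12}\,[f]_{\dot W^{s,2}(\R^N)} \aeq [f]_{\dot F^{s}_{2,2}(\R^N)},
\]
and similarly with $t$ in place of $s$. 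Substituting these into both sides of the claimed inequality, it is enough to prove the ``bare'' comparison
\[
 [f]_{\dot F^{s}_{2,2}(\R^N)} \leq C\brac{\|f\|_{L^2(\R^N)} + [f]_{\dot F^{t}_{2,2}(\R^N)}}
\]
whenever $0 < s \leq t < 1$, with $C$ independent of $s$ and $t$.

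For that inequality, I would invoke Plancherel: the almost-orthogonality of the homogeneous Littlewood-Paley pieces gives
\[
 [f]_{\dot F^{s}_{2,2}(\R^N)}^{2} = \sum_{j \in \Z} 2^{2sj}\,\|\Delta_j f\|_{L^2(\R^N)}^{2}, \qquad \|f\|_{L^2(\R^N)}^{2} \aeq \sum_{j \in \Z}\|\Delta_j f\|_{L^2(\R^N)}^{2}.
\]
Splitting the sum at $j=0$, for $j \geq 0$ we have $2^{2sj}\leq 2^{2tj}$ (since $s \leq t$), so this piece is dominated by $[f]_{\dot F^{t}_{2,2}(\R^N)}^{2}$; for $j < 0$ we have $2^{2sj} \leq 1$, so this piece is dominated by $\sum_{j<0}\|\Delta_j f\|_{L^2(\R^N)}^2 \aeq \|f\|_{L^2(\R^N)}^2$. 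Adding the two parts and taking square roots yields the Triebel-Lizorkin comparison, hence the corollary.

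I do not anticipate a real obstacle: once \Cref{th:mainFspp} is in hand, the argument is essentially just a dyadic split, uniform in $s,t$. The only bookkeeping is the elementary equivalence $\frac{1}{s^{1/2}}+\frac{1}{(1-s)^{1/2}} \aeq \min\{s,1-s\}^{-1/2}$ and the Plancherel identity for $\dot F^{s}_{2,2}$, both of which carry constants depending only on $N$.
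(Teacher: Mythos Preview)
Your argument is correct and follows essentially the same route as the paper: apply \Cref{th:mainFspp} at $p=2$ to convert both sides to $\dot F^{s}_{2,2}$-seminorms (using the elementary equivalence $s^{-1/2}+(1-s)^{-1/2}\aeq \min\{s,1-s\}^{-1/2}$), and then invoke the comparison $[f]_{\dot F^{s}_{2,2}}\aleq \|f\|_{L^2}+[f]_{\dot F^{t}_{2,2}}$ for $s\le t$. The paper simply cites this last inequality without proof, whereas you supply the one-line dyadic split via Plancherel; that is the only difference.
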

For the convenience of the reader, we give the details of the proof \Cref{co:sharpshob2} in \Cref{s:co:sharpshob2}.

\begin{remark}[Sharpness of the constants]
To some extent the constants in \Cref{th:mainFspp} are sharp, as can be shown using the results of \cite{BBM01}.
\begin{enumerate}
\item Observe that in general for $p < 2$
\[
\brac{\frac{1}{s^{\frac{1}{p}}} + \frac{1}{(1-s)^{\frac{1}{p}}}}\, [f]_{\dot{F}^{s}_{p,p}(\R^N)} \not \leq C [f]_{\dot W^{s,p}(\R^N)} 
\]
for $C = C(N,p) > 0$. Indeed, if that was true for all $s \in (0,1)$, we could pick a function $f \in H^{1,p}(\R^N)$ with compact support that does not belong to $F^{1}_{p,p}(\R^N)$. From \cite{BBM01} we would then obtain that 
\[
 \limsup_{s \to 1^-} \ (1-s)^{\frac{1}{p}} [f]_{\dot W^{s,p}(\R^N)} < \infty,
\]
however we have 
\[
 \lim_{s \to 1^-} [f]_{\dot{F}^{s}_{p,p}(\R^N)} = [f]_{\dot{F}^{1}_{p,p}(\R^N)} = \infty.
\]
\item Similarly, for $p > 2$ in general
\[
 [f]_{{\dot{W}}^{s,p}(\R^N)} \not \leq C \brac{\frac{1}{s^{\frac{1}{p}}} + \frac{1}{(1-s)^{\frac{1}{p}}}}\, [f]_{\dot{F}^{s}_{p,p}(\R^N)} \\
\]
for $C = C(N,p) > 0$. To obtain a counterexample in this case take, $f \in F^{1}_{p,p}(\R^N)$ with compact support and $f \not \in H^{1,p}(\R^N)$. Then, again by \cite{BBM01}
\[
 \limsup_{s \to 1^-} \ (1-s)^{\frac{1}{p}} [f]_{{\dot{W}}^{s,p}(\R^N)}  = \infty,
\]
however 
\[
 \liminf_{s \to 1^-} \ [f]_{\dot{F}^{s}_{p,p}(\R^N)} = [f]_{\dot{F}^{1}_{p,p}(\R^N)} < \infty.
\]
\end{enumerate}
\end{remark}

\subsection{{Results about $\dot{F}^s_{p,2}$}}

{Next we explore relationships between the $\dot{W}^{s,p}$ and the $\dot{F}^s_{p,2}$ seminorms}. } Observe that while \Cref{th:mainFspp} is a nice characterization, and we obtain some convergence for functions with uniformly bounded $(1-s)^{\frac{1}{p}}[f]_{\dot W^{s,p}(\R^N)}$-norms, we do not recover \Cref{th:bbm} yet. For this we need a different space. Namely, we obtain the following $\dot{F}^{s}_{p,2}$-estimate and the main focus should be on how changing from $\dot{F}^{s}_{p,p}$ to $\dot{F}^{s}_{p,2}$ improves the dependency on $s$ and $(1-s)$.
\begin{theorem}\label{th:mainFsp2}
Let $N \ge 1$, $p \in (1,\infty)$. Then there exists $C = C(N,p) > 0$, such that for all $s \in (0,1)$ and $f \in \Sw(\R^N)$, 
%For every $N \in \N$ and $p \in (1,\infty)$ there exists a constant $C=C(N,p)$ so that for all $f \in \Sw(\R^N)$, $s \in (0,1)$, one has
\begin{enumerate}
\item if $1 < p \leq 2$: 
\begin{equation}\label{eq:mainFsp2:1}
 C^{-1} \brac{\frac{1}{s^{\frac{1}{p}}} + \frac{1}{(1-s)^{\frac{1}{p}}}}\, [f]_{\dot{F}^{s}_{p,2}(\R^N)} \leq [f]_{\dot W^{s,p}(\R^N)} .
\end{equation}
\item if $2 \leq p  < \infty$:
\begin{equation}\label{eq:mainFsp2:2}
 [f]_{\dot W^{s,p}(\R^N)}  \leq C \brac{\frac{1}{s^{\frac{1}{p}}} + \frac{1}{(1-s)^{\frac{1}{p}}}}\, [f]_{\dot{F}^{s}_{p,2}(\R^N)}.
\end{equation}
\end{enumerate}
\end{theorem}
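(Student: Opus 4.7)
My plan is to combine a Littlewood--Paley decomposition of the difference $\tau_h f - f$ (with $\tau_h f(x):=f(x+h)$) with Minkowski's integral inequality, in order to reduce the Gagliardo energy to a pointwise energy density at each dyadic frequency, and then track the sharp $s$-dependence coming from the scale splitting $|h|\lessgtr 2^{-j}$. Starting from $[f]_{\dot W^{s,p}}^p=\int_{\R^N}|h|^{-N-sp}\|\tau_h f-f\|_{L^p}^p\,dh$ and observing that $\Delta_j$ commutes with $\tau_h$, the functions $\psi_j(x,h):=\Delta_j f(x+h)-\Delta_j f(x)$ form a Littlewood--Paley decomposition of $\tau_h f-f$, so by the square-function characterization (valid for $1<p<\infty$)
\[
\|\tau_h f-f\|_{L^p}^p \simeq \int_x \Bigl(\sum_j |\psi_j(x,h)|^2\Bigr)^{p/2}\,dx.
\]
Since $|h|^{-N-sp}(\sum_j|\psi_j|^2)^{p/2} = (\sum_j (|h|^{-(N+sp)/p}|\psi_j|)^2)^{p/2}$, Minkowski's integral inequality relating $L^p_h(\ell^2_j)$ and $\ell^2_j(L^p_h)$ (with $p\geq 2$ giving $L^p(\ell^2)\hookrightarrow\ell^2(L^p)$, and the reverse for $p\leq 2$) produces
\[
[f]_{\dot W^{s,p}}^p\lesssim\int_x \Bigl(\sum_j A_j(x)^{2/p}\Bigr)^{p/2}dx \ \text{ if }\ p\geq 2,\qquad A_j(x):=\int_h \frac{|\psi_j(x,h)|^p}{|h|^{N+sp}}\,dh,
\]
and the reverse inequality if $p\leq 2$.

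The pointwise control of $A_j$ is obtained by splitting the $h$-integral at $|h|=2^{-j}$. On $\{|h|\leq 2^{-j}\}$ I use the mean-value inequality together with Bernstein for Peetre's maximal function, $|\nabla\Delta_j f(y)|\lesssim 2^j(\Delta_j f)_M^*(x)(1+2^j|x-y|)^M$, to obtain $|\psi_j(x,h)|\lesssim 2^j|h|\,(\Delta_j f)_M^*(x)$; on $\{|h|>2^{-j}\}$ I bound $|\psi_j(x,h)|\lesssim (\Delta_j f)_M^*(x)(1+2^j|h|)^M$ directly. The polar integrals $\int_0^{2^{-j}}r^{p(1-s)-1}\,dr=\frac{2^{-jp(1-s)}}{p(1-s)}$ and $\int_{2^{-j}}^\infty r^{-1-sp}\,dr=\frac{2^{jsp}}{sp}$ supply the sharp $(1-s)^{-1}$ and $s^{-1}$ factors, yielding the pointwise upper bound
\[
A_j(x)\lesssim \Bigl(\frac{1}{s}+\frac{1}{1-s}\Bigr)2^{spj}\bigl((\Delta_j f)_M^*(x)\bigr)^p.
\]
For case (2), $p\geq 2$, this is inserted into the Minkowski bound; the Peetre version of the Fefferman--Stein vector-valued maximal inequality (valid for $M>N/\min(p,2)$) replaces $(\Delta_j f)_M^*$ by $\Delta_j f$ inside the $\ell^2$-square function, and $(a+b)^{1/p}\sim a^{1/p}+b^{1/p}$ yields \eqref{eq:mainFsp2:2}.

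The main obstacle is case (1), $p\leq 2$: the analogous pointwise lower bound $A_j(x)\gtrsim(\tfrac{1}{s}+\tfrac{1}{1-s})2^{spj}|\Delta_j f(x)|^p$ \emph{fails}---at critical points of $\Delta_j f$ the contribution from $\{|h|\leq 2^{-j}\}$ does not recover the $(1-s)^{-1}$ factor, so the symmetric argument breaks. My plan is to sidestep this by duality. Applying the $p'\geq 2$ version of \eqref{eq:mainFsp2:2} (proved above) gives $[g]_{\dot W^{s,p'}}\lesssim (s^{-1/p'}+(1-s)^{-1/p'})[g]_{\dot F^s_{p',2}}$ for every Schwartz $g$. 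Combining this with the identification $[\,\cdot\,]_{\dot F^s_{p',2}}\simeq\|(-\Delta)^{s/2}\cdot\|_{L^{p'}}$ (with constants independent of $s\in(0,1)$) and $L^p$--$L^{p'}$ duality reduces the task to bounding the bilinear form $\langle(-\Delta)^{s/2}f,\phi\rangle$ for $\|\phi\|_{L^{p'}}\leq 1$ in terms of $[f]_{\dot W^{s,p}}$. Using the symmetric Fourier identity
\[
\langle(-\Delta)^{s/2}f,\phi\rangle = \tilde c_{N,s}^{-1}\int\!\!\int \frac{(f(x)-f(x+h))(\phi(x)-\phi(x+h))}{|h|^{N+s}}\,dh\,dx
\]
together with Hölder in $(x,h)$ via the asymmetric splitting $|h|^{-N-s}=|h|^{-N/p-s}\cdot|h|^{-N/p'}$, and carefully handling the resulting $\phi$-factor via a mollification in $h$, should yield exactly the factor $s^{-1/p}+(1-s)^{-1/p}$ in \eqref{eq:mainFsp2:1}. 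The hardest technical step will be controlling this dual $\phi$-factor by $\|\phi\|_{L^{p'}}$ alone, which requires absorbing a divergent $\int|h|^{-N}dh$ by the sharp $s$-asymptotics of $\tilde c_{N,s}$ and the $\phi$-mollification; this duality computation is the crux of the whole argument.
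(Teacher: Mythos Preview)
Your argument has a genuine gap in each of the two cases.

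\textbf{Case (2), $p\geq 2$.} The pointwise bound $A_j(x)\lesssim (s^{-1}+(1-s)^{-1})\,2^{spj}\bigl((\Delta_j f)^*_M(x)\bigr)^p$ is false on the far region $\{|h|>2^{-j}\}$. The estimate $|\psi_j(x,h)|\lesssim(\Delta_j f)^*_M(x)(1+2^j|h|)^M$ is correct, but you then integrate as though the factor $(1+2^j|h|)^M$ were absent --- the polar integral you quote is $\int_{2^{-j}}^\infty r^{-1-sp}\,dr$. Keeping the factor yields $\int_{2^{-j}}^\infty r^{-1+(M-s)p}\,dr$, which diverges whenever $M\geq s$; since the Fefferman--Stein inequality forces $M>N/2$, this already fails for every $s\in(0,1)$ once $N\geq 2$. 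Concretely, $\int_{|h|>2^{-j}}|h|^{-N-sp}|\Delta_jf(x+h)|^p\,dh$ cannot be dominated pointwise by $\bigl((\Delta_jf)^*_M(x)\bigr)^p$: a bump of $\Delta_jf$ far from $x$ contributes fully to the integral while being heavily suppressed in the maximal function. The paper avoids this by integrating in $x$ \emph{before} handling the high-frequency piece: for $|z|\approx 2^{-k}$ and $j\geq 0$ one uses $|\Delta_{k+j}f(\cdot+z)-\Delta_{k+j}f|\leq |\Delta_{k+j}f(\cdot+z)|+|\Delta_{k+j}f|$ inside the $\ell^2$ square function and then translation invariance of $\|\cdot\|_{L^p_x}$. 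Your Minkowski step, which pulls the $h$-integral inside the $\ell^2_j$, destroys precisely this translation-invariance trick.

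\textbf{Case (1), $p\leq 2$.} The duality strategy is exactly the right one (and is what the paper does), but your implementation cannot close. Dualising $\|(-\Delta)^{s/2}f\|_{L^p}$ against a bare $\phi\in L^{p'}$ and invoking the integral formula for $(-\Delta)^{s/2}$ produces the kernel $|h|^{-N-s}$; your asymmetric H\"older splitting $|h|^{-N-s}=|h|^{-N/p-s}\cdot|h|^{-N/p'}$ then leaves $\phi$ carrying the divergent quantity $\int\!\!\int|\phi(x)-\phi(x+h)|^{p'}|h|^{-N}\,dh\,dx$. No ``mollification'' fixes this, because the constant in that integral formula is $c_{N,s/2}\approx s$, which does not vanish as $s\to 1^-$ and so cannot supply the missing $(1-s)$ factor. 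The correct move is to dualise one level higher: write $\phi=(-\Delta)^{s/2}g$ with $[g]_{\dot F^s_{p',2}}\lesssim 1$, so that $\langle(-\Delta)^{s/2}f,\phi\rangle=\langle(-\Delta)^s f,g\rangle$. Now the integral formula for $(-\Delta)^s$ has kernel $|h|^{-N-2s}$ and constant $c_{N,s}\approx\min\{s,1-s\}$, and the \emph{symmetric} H\"older splitting $|h|^{-N-2s}=|h|^{-N/p-s}\cdot|h|^{-N/p'-s}$ yields $c_{N,s}\,[f]_{\dot W^{s,p}}\,[g]_{\dot W^{s,p'}}$. At this point your case-(2) bound on $[g]_{\dot W^{s,p'}}$ applies and the $s$-arithmetic gives exactly \eqref{eq:mainFsp2:1}. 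This is the paper's argument; your proposal was one substitution away from it.
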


{The upper bound for $[f]_{\dot{F}^s_{p,2}(\R^N)}$ in \eqref{eq:mainFsp2:1} in }\Cref{th:mainFsp2} provides a full, $\R^N$-version of \Cref{th:bbm} if $p \leq 2$, see \Cref{co:strongconv} below. For $p \geq 2$
%we need one more ingredient: a Sobolev-type estimate 
{the desired upper bound for $[f]_{\dot{F}^s_{p,2}(\R^N)}$ will be provided by the following Sobolev-type estimate: see \eqref{eq:sobolev2}.}
\begin{theorem}[Sobolev-Estimate]\label{th:sobolev}
Let {$N \geq 1$,} $p \in (1,\infty)$.%, $s \in (0,1)$ and $f \in \Sw(\R^N)$.
\begin{enumerate}
\item {Then there exists $C = C(N,p) > 0$, such that for $0 \leq r < s < t \leq 1$ and $f \in \Sw(\R^N)$,}
%Let $s \in (0,1)$, $r \in [0,s)$ and $t \in (s,1]$, then
\begin{equation}\label{eq:sobolev1}
 [f]_{\dot W^{s,p}(\R^N)}  \leq C \left(\frac{1}{(s-r)^{\frac{1}{p}}}\, [f]_{\dot{F}^{r}_{p,2}} + \frac{1}{(t-s)^{\frac{1}{p}}}\, [f]_{\dot{F}^{t}_{p,2}} \right). 
\end{equation}
%The constant $C$ depends on $p$ and $N$.
\item Let $\Lambda > 1$. {Then there exists $C = C(N,p,\Lambda) > 0$, such that the following holds: Let
\chgdt{$s \in [1-\frac{1}{2\Lambda}, 1)$}%
% $0 < s \leq 1-\frac{1}{2\Lambda}$
.} Let $\bar{r} \in (0,s)$ such that $(1-\bar{r})=\Lambda (1-s)$. Pick $r \in [0,\bar{r}]$. {Then for any $f \in \Sw(\R^N)$,}
\begin{equation}\label{eq:sobolev2}
 [f]_{\dot{F}^{r}_{p,2}(\R^N)} \leq C  \brac{\|f\|_{L^p(\R^N)} + (1-s)^{\frac{1}{p}} [f]_{\dot W^{s,p}(\R^N)}}.
\end{equation}
\end{enumerate}
\end{theorem}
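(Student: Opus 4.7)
My plan for \Cref{th:sobolev} is to use Littlewood-Paley decomposition for both parts.

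For Part (1), I would start from
\[
[f]_{\dot W^{s,p}(\R^N)}^p = \int_{\R^N}\int_{\R^N} \frac{|f(x+h)-f(x)|^p}{|h|^{N+sp}}\,dh\,dx
\]
and decompose $h$ dyadically into shells $|h|\sim 2^{-k}$ for $k\in\Z$. On each shell, expand $f = \sum_j \Delta_j f$ and split the $j$-sum at $j=k$: for $j \leq k$ I would use a mean-value estimate $|\Delta_j f(x+h) - \Delta_j f(x)| \lesssim |h|\,2^j M^{\natural}(\Delta_j f)(x)$ involving a Peetre-type maximal function $M^\natural$ to absorb the translation, while for $j > k$ I would use the pointwise triangle-inequality bound. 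A H\"older argument in $j$ then converts these $\ell^1$-sums into square-function structure, with the two geometric tails $\sum_{j \leq k} 2^{-(k-j)\alpha}$ and $\sum_{j > k} 2^{-(j-k)\beta}$ producing precisely the prefactors $(s-r)^{-1/p}$ and $(t-s)^{-1/p}$ once the exponents $\alpha,\beta$ are chosen to match the shell-weight $2^{ksp}$. Integrating in $x$ and summing in $k$ using the Fefferman-Stein vector-valued maximal inequality yields $[f]_{\dot F^r_{p,2}}$ and $[f]_{\dot F^t_{p,2}}$ on the right.

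For Part (2), I would split $f = P_{\leq 0} f + P_{>0} f$ via Littlewood-Paley projections. The low-frequency piece contributes $[P_{\leq 0}f]_{\dot F^r_{p,2}} \lesssim \|f\|_{L^p}$ by the Littlewood-Paley square-function theorem, since $4^{jr}\leq 1$ for $j\leq 0$ and $r\geq 0$. For the high-frequency piece, using $4^{jr}\leq 4^{j\bar r}$ for $j>0$ reduces the problem to $r=\bar r$; the gap estimate $s-\bar r = (\Lambda-1)(1-s)$ is the key quantitative input. When $p\leq 2$, the conclusion follows directly from \Cref{th:mainFsp2}(1) applied at smoothness $s$, combined with the pointwise monotonicity $\sum_{j>0}4^{j\bar r}|\Delta_j f|^2 \leq \sum_{j>0}4^{js}|\Delta_j f|^2$. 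When $p\geq 2$, \Cref{th:mainFsp2}(2) goes in the wrong direction, so I would instead apply Part (1) of the current theorem with a suitably chosen triple (for instance $(0,r,s)$ or $(\bar r, s, 1)$), and combine the resulting inequality with the gap $s-\bar r\geq (\Lambda-1)(1-s)$ to isolate the desired $(1-s)^{1/p}$ factor.

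The hardest step I foresee is Part (2) for $p\geq 2$. A naive H\"older bound on the $j$-sum, or the Besov embedding $\dot B^{\bar r}_{p,2}\hookrightarrow \dot F^{\bar r}_{p,2}$ combined with \Cref{th:mainFspp}, only yields a prefactor of order $(1-s)^{(4-p)/(2p)}$, strictly weaker than the target $(1-s)^{1/p}$ whenever $p>2$. Obtaining the sharp factor, which morally reflects the BBM identity \eqref{eq:BBMglobaleq}, requires keeping the $\ell^2$-structure \emph{inside} $L^p$ throughout the argument; this is most naturally achieved by repeating the $h$-dyadic decomposition of Part (1) rather than performing a direct sum-over-$j$ estimate.
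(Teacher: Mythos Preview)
Your plan for Part~(1) is essentially the paper's proof: the paper also decomposes $h$ dyadically, expands $f=\sum_j \Delta_j f$, splits at $j=k$ (triangle inequality for $j\geq k$, mean-value for $j<k$), and then estimates the resulting double sums by simple geometric-series arguments that produce the prefactors $(s-r)^{-1/p}$ and $(t-s)^{-1/p}$. The paper avoids the Peetre maximal function by using Minkowski's inequality in the $t$-integral of the mean-value estimate together with translation invariance of $L^p$, but your maximal-function variant works equally well.

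Your plan for Part~(2) has a genuine gap when $p\geq 2$. You propose to ``apply Part~(1) of the current theorem with a suitably chosen triple (for instance $(0,r,s)$ or $(\bar r,s,1)$)'', but Part~(1) is an inequality of the form $[f]_{\dot W^{\cdot,p}}\lesssim [f]_{\dot F^{\cdot}_{p,2}}+[f]_{\dot F^{\cdot}_{p,2}}$: it bounds a Gagliardo seminorm \emph{above} by Triebel--Lizorkin seminorms. What you need in Part~(2) is the reverse direction, an upper bound on $[f]_{\dot F^{\bar r}_{p,2}}$ in terms of $[f]_{\dot W^{s,p}}$. No choice of triple in Part~(1) applied to $f$ yields this, and ``repeating the $h$-dyadic decomposition'' does not either, since that decomposition by its nature produces upper, not lower, bounds on $[f]_{\dot W^{s,p}}$.

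The paper's missing ingredient is a \emph{duality argument}. One first proves that for any $f\in\Sw(\R^N)$,
\[
[f]_{\dot F^{\bar r}_{p,2}}\ \lesssim\ \|f\|_{L^p}\ +\ \min\{\bar r,1-\bar r\}\,[f]_{\dot W^{s,p}}\,\sup_{g}[g]_{\dot W^{2\bar r-s,p'}},
\]
where the supremum runs over $g$ with $[g]_{\dot F^{\bar r}_{p',2}}\leq 1$ and $\mathcal F g$ supported in $\{|\xi|\geq 1/4\}$; this follows from a duality characterization of $\dot F^{\bar r}_{p,2}$, the integral representation of $(-\Delta)^{\bar r}$, and H\"older's inequality on the Gagliardo integrand (splitting $|z|^{N+2\bar r}=|z|^{N/p+s}\,|z|^{N/p'+(2\bar r-s)}$). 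Now one applies Part~(1) to $g$ at exponent $p'$ with the triple $(0,\,2\bar r-s,\,\bar r)$, which is legitimate since $0<2\bar r-s<\bar r$, and the support condition on $\mathcal F g$ gives $[g]_{\dot F^0_{p',2}}\lesssim [g]_{\dot F^{\bar r}_{p',2}}\leq 1$. This yields $[g]_{\dot W^{2\bar r-s,p'}}\lesssim (s-\bar r)^{-1/p'}$, and combining with $\min\{\bar r,1-\bar r\}\lesssim (1-s)$ and $s-\bar r=(\Lambda-1)(1-s)$ gives the sharp factor $(1-s)\cdot(1-s)^{-1/p'}=(1-s)^{1/p}$. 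This argument is uniform in $p\in(1,\infty)$; your separate treatment for $p\leq 2$ via \Cref{th:mainFsp2}(1) is correct but unnecessary once the duality route is in place.
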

Applying \cite[Theorem 1]{BBM01} to $\rho(x) = |x|^{-N-(1-s)p}$ one obtains for a bounded set $\Omega$
\[
 \sup_{s \in (0,1)} (1-s)^{\frac{1}{p}} [f]_{\dot W^{s,p}(\Omega)} \leq C(N,\Omega,p) \|\nabla f\|_{L^p(\Omega)}.
\]
As an immediate corollary of \Cref{th:sobolev}, we find a variant of this inequality on $\R^N$ and even obtain a fractional version of it. In the following we denote {for non-integral values of $s$}
\[
 \dot{H}^{s,p}(\R^N)\equiv \dot{F}^{s}_{p,2}(\R^N),
\]
whose seminorm $[f]_{\dot{H}^{s,p}(\R^N)} = \|\laps{s} f\|_{L^p(\R^N)}$ is defined via the fractional Laplacian. Observe that $\|\laps{1} f\|_{L^p(\R^N)} \aeq \|\nabla f\|_{L^p(\R^N)}$ for any $p \in (1,\infty)$ by the $L^p$-boundedness of the Riesz transforms{, so for our purposes it does not really matter whether we defined $[f]_{\dot{H}^{1,p}}$ to be $\|\nabla f\|_{L^p}$ or $\|(-\Delta)^{1/2} f\|_{L^p}$}. From \eqref{eq:sobolev1} it is easy to deduce

\begin{corollary}\label{co:fracBBMupper}
Let $N \ge 1$, $p \in (1,\infty)$, $0 < \theta < 1$. Then there exists $C = C(N,p,\theta) > 0$, such that for $s \in (\theta, 1]$ and $f \in \Sw(\R^N)$,
%Let $f \in \Sw(\R^N)$, $p \in (1,\infty)$ and $\theta \in (0,1)$. Then for any $s \in (\theta,1]$
\begin{equation}\label{eq:co:fracBBMupper:s}
 \sup_{r \in [\theta,s)} (s-r)^{\frac{1}{p}} [f]_{\dot W^{r,p}(\R^N)} \leq C\brac{\|f\|_{L^p(\R^N)} + \|\laps{s} f\|_{L^p(\R^N)}}.
\end{equation}
%The constants depend on $\theta$, $N$ and $p$, but not on $s$ and $f$.
In particular, setting $s = 1$, we obtain
\[
 \sup_{r \in [\theta,1)} (1-r)^{\frac{1}{p}} [f]_{\dot W^{r,p}(\R^N)} \leq C \brac{\|f\|_{L^p(\R^N)} + \|\nabla f\|_{L^p(\R^N)}}.
\]
\end{corollary}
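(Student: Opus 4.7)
The plan is a direct application of \Cref{th:sobolev}(1), with the parameter triple there specialized to $(r, s, t) = (0, r, s)$ (where on the right I am using $r$ and $s$ to denote the variables of the corollary). The constraint $0 \leq 0 < r < s \leq 1$ is met for any $r \in [\theta, s)$ since $\theta > 0$ and $s \leq 1$, so \eqref{eq:sobolev1} yields
\[
[f]_{\dot W^{r,p}(\R^N)} \leq C \left( \frac{1}{r^{1/p}}\, [f]_{\dot F^{0}_{p,2}(\R^N)} + \frac{1}{(s-r)^{1/p}}\, [f]_{\dot F^{s}_{p,2}(\R^N)} \right).
\]
I would then identify the two Triebel--Lizorkin seminorms with the familiar quantities appearing on the right-hand side of \eqref{eq:co:fracBBMupper:s}: $[f]_{\dot F^{0}_{p,2}(\R^N)} \aeq \|f\|_{L^p(\R^N)}$ (as already noted in the discussion preceding \Cref{th:mainFspp}), and $[f]_{\dot F^{s}_{p,2}(\R^N)} = \|\laps{s} f\|_{L^p(\R^N)}$ by the definition of $\dot H^{s,p}$ used in the paper; the endpoint $s=1$ is subsumed because the paper remarks that $[f]_{\dot F^{1}_{p,2}(\R^N)} \aeq \|\nabla f\|_{L^p(\R^N)} \aeq \|\laps{1} f\|_{L^p(\R^N)}$ via the $L^p$-boundedness of the Riesz transforms.

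To finish, I would multiply both sides by $(s-r)^{1/p}$ and absorb the resulting $s$-dependent factors. Using $r \geq \theta$ and $s - r \leq 1$ one has
\[
(s-r)^{1/p}\, r^{-1/p} \leq \theta^{-1/p},
\]
so the first term contributes at most $C\, \theta^{-1/p}\, \|f\|_{L^p(\R^N)}$, while the second term becomes exactly (up to an absolute multiplicative constant) $\|\laps{s} f\|_{L^p(\R^N)}$. Taking the supremum over $r \in [\theta, s)$ then yields \eqref{eq:co:fracBBMupper:s} with $C = C(N, p, \theta)$, and setting $s = 1$ gives the ``in particular'' statement after using once more that $\|\laps{1} f\|_{L^p} \aeq \|\nabla f\|_{L^p}$. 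I do not anticipate any real obstacle: \Cref{th:sobolev}(1) does all the heavy lifting, and the only point that requires a line of care is handling the $s=1$ endpoint of the Triebel--Lizorkin identification and keeping the $\theta$-dependence explicit in the bound $r^{-1/p} \leq \theta^{-1/p}$.
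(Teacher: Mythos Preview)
Your proposal is correct and is essentially the same argument the paper has in mind: the paper states that \eqref{eq:co:fracBBMupper:s} is ``easy to deduce'' from \eqref{eq:sobolev1}, and your specialization $(r,s,t)\mapsto(0,r,s)$ together with the bound $(s-r)^{1/p}r^{-1/p}\leq\theta^{-1/p}$ is exactly how that deduction goes. The only cosmetic point is that $[f]_{\dot F^{s}_{p,2}}$ and $\|\laps{s} f\|_{L^p}$ are equivalent rather than literally equal (via \Cref{la:FspeqF0plaps} combined with \Cref{la:LittlewoodPaley}, uniformly for $s\in[0,1]$), but you already flag the need for care at the $s=1$ endpoint, so this is not a gap.
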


\begin{remark}
Barring the independence of the constant on $s$, the case $s < 1$ in \eqref{eq:co:fracBBMupper:s} is only interesting for the case $p < 2$. For $p \geq 2$ and $s <1$ it is an obvious (and non-optimal) estimate. Indeed, if $f \in \Sw(\R^N)$, $p \geq 2$ and $r \in (0,1)$, we have 
\[
 [f]_{\dot W^{r,p}(\R^N)} \aeq [f]_{\dot{F}^{r}_{p,p}} \leq [f]_{\dot{F}^{r}_{p,2}} \aeq \|\laps{r} f\|_{L^p(\R^N)},
\]
where the implicit constants depend on $r, p$ and $N$. If $r$ is in a compact subinterval of $(0,1)$, then the constants can be taken independent of $r$. 
Since
\[
 (s-r)^{\frac{1}{p}} [f]_{\dot W^{r,p}(\R^N)}  \leq C [f]_{\dot W^{r,p}(\R^N)} %   \aleq \|\laps{r} f\|_{L^p(\R^N)}.
\]
and
\[
\|\laps{r} f\|_{L^p(\R^N)} \leq C \brac{ \|f\|_{L^p(\R^N)} + \|\laps{s} f\|_{L^p(\R^N)}}
\]
for $r \in [0,s]$ (with constants independent of $r$ and $s$), if $s \leq 1-\theta'$ for some $\theta' > 0$, then {whenever $\theta > 0$}
\[
\sup_{r \in [\theta,s)} (s-r)^{\frac{1}{p}} [f]_{\dot W^{r,p}(\R^N)} \leq C \brac{\|f\|_{L^p(\R^N)} + \|\laps{s} f\|_{L^p(\R^N)}}
\]
with a constant depending on $\theta, \theta', p$ and $N$ but not on $r$ and $s$.
\end{remark}

\subsection{{Back to Bourgain--Brezis--Mironescu's convergence result}}

From \eqref{eq:sobolev2} and Rellich--Kondrachov theorem we recover in particular (BBM2) of \Cref{th:bbm} -- actually with a stronger convergence than is commonly considered in the literature.
\begin{corollary}\label{co:strongconv}
Let $p \in (1,\infty)$, assume that $f_k \in \Sw(\R^N)$ such that 
\[
 f_k \rightharpoonup f \quad \text{weakly in $L^p(\R^N)$ as $k \to \infty$}.
\]
Let $(s_k)_{k \in \N} \subset (0,1)$ such that $s_k \uparrow 1$ and assume that
\begin{equation}\label{eq:co:strongconv435}
\Lambda := \sup_{k} \brac{\|f_k\|_{L^p(\R^N)} + (1-s_k)^{\frac{1}{p}} [f_k]_{\dot W^{s_k,p}(\R^N)}} < \infty.
\end{equation}
Then $f \in H^{1,p}(\R^N)$ and we have 
\[
 \|f\|_{L^p(\R^N)} + \|\nabla f\|_{L^p(\R^N)} \leq C\, 
%  \lim_{\tilde{t} \uparrow 1} \liminf_{k \to \infty} [f_k]_{H^{\tilde{t},p}(\R^N)} \leq 
 \Lambda.
\]
The constant $C$ depends on $p$ and $N$.

Also, $f_k \xrightarrow{k \to \infty} f$ strongly in $H^{t,p}_{loc}(\R^N)$ for any $t \in [0,1)$, that is 
\begin{equation}\label{eq:co:strongconv4386}
 \lim_{k \to \infty} \|\laps{t} f_k - \laps{t} f\|_{L^p(K)} = 0 \quad \forall \text{compact sets $K \subset \R^N$},
\end{equation}
and for any $t \in (0,1)$
\begin{equation}\label{eq:co:strongconv43862}
 \lim_{k \to \infty} [f_k - f]_{\dot W^{t,p}(K)} = 0 \quad \forall \text{compact sets $K \subset \R^N$}.
\end{equation}
\end{corollary}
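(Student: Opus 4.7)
The plan is to apply the Sobolev-type estimate \eqref{eq:sobolev2} to upgrade the hypothesis \eqref{eq:co:strongconv435} into uniform boundedness of $\{f_k\}$ in the Bessel-potential space $\dot{F}^{\tau}_{p,2}(\R^N)$ for every fixed $\tau \in [0,1)$, then pass $\tau \uparrow 1$ to identify $f \in H^{1,p}(\R^N)$, and finally combine this with Rellich--Kondrachov compactness to obtain the local strong convergence. Fix any $\Lambda_0 > 1$ (e.g.\ $\Lambda_0 = 2$). For $k$ so large that $s_k \geq 1 - \frac{1}{2\Lambda_0}$, the threshold $\bar{r}_k := 1 - \Lambda_0(1-s_k)$ tends to $1$, and \eqref{eq:sobolev2} with $s = s_k$ yields
\[
\|\laps{\tau} f_k\|_{L^p(\R^N)} = [f_k]_{\dot{F}^{\tau}_{p,2}(\R^N)} \leq C(N,p,\Lambda_0)\, \Lambda
\]
for every fixed $\tau \in [0,1)$ and all $k$ so large that $\tau \leq \bar{r}_k$.

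For the $H^{1,p}$-bound, fix $\tau \in (0,1)$. Testing $\laps{\tau} f_k$ against $\varphi \in \Sw(\R^N)$ and using $f_k \rightharpoonup f$ in $L^p(\R^N)$ gives $\laps{\tau} f_k \rightharpoonup \laps{\tau} f$ weakly in $L^p(\R^N)$, hence $\|\laps{\tau} f\|_{L^p(\R^N)} \leq C\Lambda$ by weak lower semi-continuity. The family $\{\laps{\tau} f\}_{\tau \in (0,1)}$ is then uniformly bounded in $L^p(\R^N)$ and converges distributionally to $\laps{1} f$ as $\tau \uparrow 1$, so weak-$*$ compactness produces $\laps{1} f \in L^p(\R^N)$ with $\|\laps{1} f\|_{L^p(\R^N)} \leq C\Lambda$, which is equivalent to $\|\nabla f\|_{L^p(\R^N)} \leq C\Lambda$ by the $L^p$-boundedness of the Riesz transforms.

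For the strong convergence claims, fix $t \in [0,1)$, a compact $K \subset \R^N$, $\tau \in (t,1)$, and a cutoff $\phi \in C_c^\infty(\R^N)$ with $\phi \equiv 1$ on a $\delta$-neighborhood of $K$. Since $\phi$ is a multiplier on $H^{\tau,p}(\R^N)$, $\{\phi f_k\}$ is bounded in $H^{\tau,p}(\R^N)$ with uniformly compact support, and the classical Rellich--Kondrachov compactness yields $\phi f_k \to \phi f$ strongly in $H^{t,p}(\R^N)$ (the $L^p$-weak limit identifies the strong limit; a standard subsequence argument promotes this to full-sequence convergence). For \eqref{eq:co:strongconv4386}, decompose $\laps{t}(f_k - f) = \laps{t}(\phi(f_k-f)) + \laps{t}((1-\phi)(f_k-f))$: the first term already converges to $0$ in $L^p(\R^N)$, and for $x \in K$ the second is given by the singular-integral representation
\[
\laps{t}((1-\phi)(f_k-f))(x) = -c_{N,t} \int_{\R^N} \frac{(1-\phi(y))(f_k(y)-f(y))}{|x-y|^{N+t}}\, dy
\]
(valid since $(1-\phi)(f_k-f) \equiv 0$ near $K$) with $|x-y| \geq \delta$ on the integrand's support. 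Splitting this integral at $|y| = R$, H\"older with the uniform $L^p$-bound controls the tail uniformly in $k$ for large $R$, while for the bounded part Rellich on $B_R$ (giving $f_k \to f$ strongly in $L^p(B_R)$) combined with the kernel bound $|x-y|^{-N-t} \leq \delta^{-N-t}$ forces convergence to $0$ uniformly on $K$. For \eqref{eq:co:strongconv43862} with $t \in (0,1)$, since $\phi \equiv 1$ on $K \times K$ we have $[f_k - f]_{\dot{W}^{t,p}(K)} = [\phi(f_k - f)]_{\dot{W}^{t,p}(K)} \leq [\phi(f_k - f)]_{\dot{W}^{t,p}(\R^N)}$, and the elementary interpolation inequality
\[
[g]_{\dot{W}^{t,p}(\R^N)} \leq C\, \|g\|_{L^p(\R^N)}^{1 - t/\tau}\, [g]_{\dot{W}^{\tau,p}(\R^N)}^{t/\tau}
\]
(proved by splitting the Gagliardo integral at scale $|x-y| = r$ and optimizing in $r$) closes the argument, noting that $\|\phi(f_k-f)\|_{L^p}\to 0$ from above while the $\dot W^{\tau,p}$-factor is uniformly bounded via the Triebel--Lizorkin embedding $\dot{F}^{\tau'}_{p,2} \hookrightarrow \dot{F}^{\tau}_{p,p} = \dot{W}^{\tau,p}$ for any $\tau' \in (\tau, 1)$.

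The principal technical obstacle is the nonlocality of $\laps{t}$: Rellich--Kondrachov alone controls only the cutoff $\phi(f_k - f)$, and transferring this to the genuinely nonlocal quantities $\laps{t}(f_k - f)$ and $[f_k - f]_{\dot{W}^{t,p}}$ on $K$ requires the near-field/far-field splitting of the singular-integral representation together with the interpolation step described above.
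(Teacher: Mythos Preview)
Your proof is correct and reaches the same conclusions, but it deviates from the paper's argument in several technical choices. For the $H^{1,p}$-bound on $f$, the paper invokes \Cref{la:triebelconv} (lower semicontinuity of the $\dot{F}^{s_k}_{p,2}$-seminorms under weak $L^p$ limits) to obtain $[f]_{\dot{F}^{1}_{p,2}}\le C\Lambda$ directly, whereas you argue by first passing to the weak limit at each fixed level $\tau$ and then letting $\tau\uparrow 1$ via weak-$*$ compactness; both work, and yours avoids citing the lemma at the cost of a slightly more delicate identification of the distributional limit. For the local strong convergence of $\laps{t}f_k$, the paper writes $\eta\laps{t}(f_k-f)=\laps{t}(\eta(f_k-f))+[\eta,\laps{t}](f_k-f)$ and controls the commutator via the Coifman--McIntosh--Meyer estimate, while you instead split $\laps{t}(f_k-f)=\laps{t}(\phi(f_k-f))+\laps{t}((1-\phi)(f_k-f))$ and treat the second piece through the explicit kernel $|x-y|^{-N-t}$; your route is more elementary in that it avoids the commutator machinery, though the paper's approach is somewhat more systematic. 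Finally, for \eqref{eq:co:strongconv43862} the paper appeals to a local Sobolev-type inequality $[g]_{\dot{W}^{\tilde t,p}(\tilde K)}\lesssim \|\laps{t}g\|_{L^p(K)}+\|g\|_{L^p(K)}$, whereas you use the global Gagliardo interpolation $[g]_{\dot{W}^{t,p}}\lesssim \|g\|_{L^p}^{1-t/\tau}[g]_{\dot{W}^{\tau,p}}^{t/\tau}$; again both are valid, and your version is self-contained. One minor slip: you write $\|\laps{\tau}f_k\|_{L^p}=[f_k]_{\dot F^{\tau}_{p,2}}$, but this is only an equivalence (cf.\ \Cref{la:FspeqF0plaps}); it does not affect the argument.
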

We give the details of the proof in \Cref{s:co:strongconv}. The above strong convergence may not be global in $\R^N$ (even strong convergence in $L^p(\R^N)$ may be false). A counterexample is given by a standard counterexample to the global Rellich-Kondrachov Theorem for $W^{1,p}(\R^N)$ (which shows that $W^{1,p}(\R^N)$ does not embed compactly into $L^p(\R^N)$): for instance, if $f \in C^{\infty}_c(\R^N)$ and $\{f_k\}_k$ is a sequence of translates of $f$ that escapes off to infinity, then $f_k$ converges weakly to $0$ in $W^{1,p}(\R^N)$, \eqref{eq:co:strongconv435} is satisfied by \Cref{co:fracBBMupper}, but $f_k$ does not converge strongly in $L^p(\R^N)$.

\subsection{Open questions and further directions}

\begin{question}\label{q:sharpsob}
Let $p \in (1,\infty)$, $0 < \theta < s<t <1$ and $f \in \Sw(\R^N)$. Is it true that there exists $C = C(N,p,\theta) > 0$, such that
\[
 \min\{s,(1-s)\}^{\frac{1}{p}} [f]_{\dot W^{s,p}(\R^N)} \leq C
 \brac{\|f\|_{L^p(\R^N)} + \min\{t,(1-t)\}^{\frac{1}{p}} [f]_{\dot W^{t,p}(\R^N)}}?
\]
\end{question}

An indication that the above might be true, is the case $p=2$, \Cref{co:sharpshob2}. Also, of course, \Cref{q:sharpsob} holds asymptotically for $s=t$ and -- in view of \cite{BBM01} -- it holds if we first let $t \uparrow 1$ and \emph{then} take $s \uparrow 1$.

Let us remark that a very rough toy-case for \Cref{q:sharpsob} are characteristic functions -- and indeed the inequality from \Cref{q:sharpsob} holds in that case: for $A \subset \R^N$ measurable we have $|\chi_A(x)-\chi_A(y)|^p = |\chi_A(x)-\chi_A(y)|^2$, and $|x-y|^{N+sp} = |x-y|^{N+\frac{sp}{2}2}$. Thus,
\[
 [\chi_A]_{\dot W^{s,p}(\R^N)} = [\chi_A]_{\dot W^{\frac{sp}{2},2}(\R^N)}^{\frac{2}{p}},
\]
so
\[
\begin{split}
 &\min\{s,(1-s)\}^{\frac{1}{p}} [\chi_A]_{\dot W^{s,p}(\R^N)} 
 =  \min\{s,(1-s)\}^{\frac{1}{p}} [\chi_A]_{\dot W^{\frac{sp}{2},2}(\R^N)}^{\frac{2}{p}}
 =  \brac{\min\{s,(1-s)\}^{\frac{1}{2}} [\chi_A]_{\dot W^{\frac{sp}{2},2}(\R^N)} }^{\frac{2}{p}}\\ 
 &\quad \aleq_p  \|\chi_A\|_{L^2}^{\frac{2}{p}} + \brac{\min\{t,(1-t)\}^{\frac{1}{2}} [\chi_A]_{\dot W^{\frac{tp}{2},2}(\R^N)} }^{\frac{2}{p}}
 = \|\chi_A\|_{L^p} +  \min\{t,(1-t)\}^{\frac{1}{p}} [\chi_A]_{\dot W^{t,p}(\R^N)}.
 \end{split}
\]

Moving on to the next question, the estimate \eqref{eq:co:fracBBMupper:s} hints towards the possibility that there might be a Brezis--Bourgain--Mironescu-type result for $s  <1$, namely it establishes {an} $H^{s,p}$-type (BBM1)-estimate. It is unclear to us if the convergence result is also true.

\begin{question}\label{q:fracBBM}
Let $p \in (1,\infty)$, assume that $f_k \in \Sw(\R^N)$ such that 
\[
 f_k \rightharpoonup f \quad \text{weakly in $L^p(\R^N)$ as $k \to \infty$}.
\]
Let $t \in (0,1)$ and  $(s_k)_{k \in \N} \subset (0,t)$ such that $s_k \uparrow t$ and assume that
\[
\Lambda := \sup_{k} \brac{\|f_k\|_{L^p(\R^N)} + (t-s_k)^{\frac{1}{p}} [f_k]_{\dot W^{s_k,p}(\R^N)}} < \infty.
\]
Is it true that $f \in H^{t,p}(\R^N)$ and that there exists $C = C(N,p,t) > 0$, such that
\[
 \lim_{\tilde{t} \uparrow t} \limsup_{k \to \infty} \|\laps{\tilde{t}} f_k\|_{L^{p}(\R^N)} \leq C \Lambda?
\]
\end{question}

Our next question concerns an extension to other Triebel-Lizorkin spaces. It is known that for $p > \frac{Nq}{N+sq}$
\begin{equation}\label{eq:wspq}
        [f]_{\dot{W}^{s,p}_q(\R^N)} {:=} \left (\int_{\R^N} \left (\int_{\R^N} \frac{|f(x)-f(y)|^q}{|x-y|^{N+sq}} dy\right )^{\frac{p}{q}}dx\right )^{\frac{1}{p}} \aeq [f]_{\dot{F}^{s}_{p,q}(\R^N)},
       \end{equation}
see~\cite{Stein1961,AMV12} for $q=2$, \cite[Section 2.5.10]{T83} for $s \geq \frac{N}{\min\{p,q\}}$ and \cite{P19} for the general $p > \frac{Nq}{N+sq}$. Unless $q = 2$, which was treated in \cite{F70}, the case of equality $p = \frac{Nq}{N+sq}$ seems to be open. 
\begin{question}\label{q:fspqversion}
What is the dependency on $s$ and $(1-s)$ as $s \downarrow 0$ or $s \uparrow 1$ in the equivalence \eqref{eq:wspq}?
\end{question}
\Cref{q:fspqversion} is related to works by Spector--Leoni, \cite{LS11,LS11b}. Of course, the limit cases $p=1$ and $p=\infty$ would also be an interesting direction, cf. \cite{D2002}.

Lastly let us mention that Bourgain--Brezis--Mironescu \cite{BBM01} (see also \cite{Ponce04}) the singular kernel $|x-y|^{-N-sp}$ is only one special case considered. In general they work with family of kernels $\rho_n$ that suitably approximate $|x-y|^{-p} \delta_{x,y}$. It might be possible to adapt our methods to treat this case as well, in the sense that as $n \to \infty$ the corresponding $\rho_n$-seminorm controls more and more frequencies estimated in ${\dot{F}}^{s}_{p,2}$.

{The paper will be organized as follows. In \Cref{sect:prelim} we collect a few basic results about Triebel-Lizorkin spaces. In \Cref{sect:p=2} we give a simple proof of \Cref{th:mainFspp} in the special case $p = 2$. In \Cref{sect:upper1.21.5} we prove the upper bounds for $[f]_{\dot{W}^{s,p}}$ in \Cref{th:mainFspp} and \Cref{th:mainFsp2}, i.e. the second inequalities of \eqref{eq:Fspppleq2} and \eqref{eq:Fspppgeq2}, and the inequality \eqref{eq:mainFsp2:2}. In \Cref{s:sobolevbound} we prove the upper bound \eqref{eq:sobolev1} for $[f]_{\dot{W}^{s,p}}$ in \Cref{th:sobolev}. In \Cref{sect:lower} we prove the lower bounds for $[f]_{\dot{W}^{s,p}}$ in \Cref{th:mainFspp}, \Cref{th:mainFsp2} and \Cref{th:sobolev}, i.e. the first inequalities of \eqref{eq:Fspppleq2} and \eqref{eq:Fspppgeq2}, and the inequalities \eqref{eq:mainFsp2:1} and \eqref{eq:sobolev2}. In \Cref{s:co:strongconv} we prove \Cref{co:strongconv}. Finally, in \Cref{s:bbmglobaleq} we prove \eqref{eq:BBMglobaleq} with $\Omega$ replaced by $\R^N$, and in \Cref{s:co:sharpshob2} we give a short proof of \Cref{co:sharpshob2}. }

{\subsection*{Recent progress in \cite{DM21}} After finishing this manuscript, Dom\'inguez and Milman \cite{DM21} settled \Cref{q:sharpsob} and \Cref{q:fracBBM}, using heavy interpolation machinery. They also provide alternative proofs of our main theorems via these interpolation and extrapolation techniques.}
% heavy Harmonic Analysis machinery
\subsection*{Acknowledgments}
D.B. is funded by the Deutsche Forschungsgemeinschaft (DFG, German Research Foundation) under Germany's Excellence Strategy EXC 2181/1 - 390900948 (the Heidelberg STRUCTURES Excellence Cluster). 
A.S. is funded by the NSF Career award DMS-2044898 and Simons foundation grant no 579261. P-L. Y. is funded by an Australian Future Fellowship FT200100399.
Discussions with James Scott and Andreas Seeger are gratefully acknowledged.

\section{Preliminaries} \label{sect:prelim}
In this section we gather preliminary results that most likely are all widely known. Throughout the paper we use the notation $A \aleq B$ whenever there is a multiplicative constant $C>0$ such that $A \leq C B$. $A \aeq B$ means $A \aleq B$ and $B \aleq A$. The constant $C$ can change from line to line and depends on dimension and exponent, but unless otherwise noted does not depend on $s$, $t$ etc.

\subsection{Mixed Measure Spaces}
Let $p,q \in (1,\infty)$, and consider the space $L^p(\ell^q)$ given by sequence $(f_j)_{j \in \Z} \subset L^p(\R^N)$ with finite norm
\[
 \|f_j\|_{L^p(\ell^q)} := \left \|\brac{\sum_j |f_j(x)|^q}^{\frac{1}{q}}\right \|_{L^p(\R^N,dx)}.
\]
By a slight abuse of notation, we will have the same notation when considering finite sequences $(f_j)_{j=-K}^K \subset L^p(\R^N)$.

From \cite[Theorem 1]{Benedek61} we obtain that $L^p(\ell^q)$ is a Banach space, and more importantly its dual space is $L^{p'}(\ell^{q'})$ in the following way: any linear functional $J \in (L^p(\ell^q))^\ast$ is given by an element $g \in L^{p'}(\ell^{q'})$ such that 
\[
 J(f) = \int_{\R^N} \sum_{j \in \Z} f_j(x) g_j(x)\, dx.
\]
In particular, from Hahn-Banach theorem, we have 
\begin{proposition}\label{pr:dualchar:Lplq} Let $p,q \in (1,\infty)$.
Let $(f_j)_{j \in \Z} \in L^{p} (\ell^{q})$. Then there exists $(g_j)_{j \in \Z} \subset L^{p'}(\R^N)$ with 
\[
 \|(g_j)_{j \in \Z}\|_{L^{p'}(\ell^{q'})} \leq 1
\]
and 
\[
 \|f_j\|_{L^p(\ell^q)} = \int_{\R^N} \sum_{j\in \Z} f_j(x) g_j(x)\, dx.
\]
An analogous statement holds for sequences $(f_j)_{j =-K}^K$.
\end{proposition}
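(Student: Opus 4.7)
The strategy is a textbook Hahn--Banach norming argument, using the identification $(L^p(\ell^q))^\ast = L^{p'}(\ell^{q'})$ recalled from \cite{Benedek61} just above the statement.

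First I would handle the case where $(f_j)_{j\in\Z} \neq 0$ in $L^p(\ell^q)$ (the zero case is trivial: take $g_j \equiv 0$). Consider the one-dimensional subspace
\[
V := \mathrm{span}\{(f_j)_{j\in\Z}\} \subset L^p(\ell^q),
\]
and define the linear functional $J_0 : V \to \R$ by
\[
J_0\bigl(\lambda (f_j)_{j\in\Z}\bigr) := \lambda\, \|(f_j)_{j\in\Z}\|_{L^p(\ell^q)}, \qquad \lambda \in \R.
\]
By construction $J_0((f_j)) = \|(f_j)\|_{L^p(\ell^q)}$ and $\|J_0\|_{V^\ast} = 1$.

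Next I would apply the Hahn--Banach theorem to extend $J_0$ to a continuous linear functional $J \in (L^p(\ell^q))^\ast$ with
\[
\|J\|_{(L^p(\ell^q))^\ast} = \|J_0\|_{V^\ast} = 1.
\]
By the Benedek--Panzone duality result cited above, there exists a unique $(g_j)_{j\in\Z} \in L^{p'}(\ell^{q'})$ representing $J$ in the sense that
\[
J\bigl((h_j)_{j\in\Z}\bigr) = \int_{\R^N} \sum_{j\in\Z} h_j(x)\, g_j(x)\, dx \qquad \text{for all } (h_j) \in L^p(\ell^q),
\]
with $\|(g_j)\|_{L^{p'}(\ell^{q'})} = \|J\|_{(L^p(\ell^q))^\ast} = 1 \leq 1$. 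Evaluating at $(f_j)$ yields
\[
\|(f_j)\|_{L^p(\ell^q)} = J_0((f_j)) = J((f_j)) = \int_{\R^N} \sum_{j\in\Z} f_j(x)\, g_j(x)\, dx,
\]
which is the desired identity.

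For the analogous statement for finite sequences $(f_j)_{j=-K}^K$, I would simply embed the finite sequence as an element of $L^p(\ell^q)$ by setting $f_j = 0$ for $|j| > K$ and apply the argument above; the resulting $(g_j)_{j\in\Z}$ then witnesses the identity, and if one wants a finite-sequence witness one can truncate $g_j$ to $|j|\leq K$ without changing either the integral or increasing the norm.

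There is no genuine obstacle here: the only place that uses nontrivial input is the representation theorem of Benedek--Panzone, which has already been invoked in the paragraph preceding the proposition. Everything else is the standard norming-functional construction from Hahn--Banach.
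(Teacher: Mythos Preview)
Your argument is correct and matches the paper's approach exactly: the paper does not give a detailed proof but simply states that the proposition follows ``from Hahn--Banach theorem'' together with the Benedek--Panzone duality $(L^p(\ell^q))^\ast = L^{p'}(\ell^{q'})$, which is precisely the norming-functional construction you have spelled out.
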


% \begin{comment}
% Denote by $\mathcal{M}$ the Hardy-Littlewood maximal function,
% \[
%  \mathcal{M} f(x) := \sup_{r > 0} |B(x,r)|^{-1} \int_{B(x,r)} |f(x)| \ dx.
% \]
% For $p \in (1,\infty)$ the map $\mathcal{M}: L^p(\R^N) \to L^p(\R^N)$ is a sublinear bounded (i.e. continuous) operator. More generally, it is also continuous on $L^p(\ell^q)$, namely we have the following result. For a proof we refer to \cite[Theorem 5.6.6]{Gclassic14}.
% \begin{theorem}\label{th:vectormax}
% Let $p,q \in (1,\infty)$ and $(f_j)_{j \in \Z} \in L^p(\ell^q)$.
% Then there exists $C = C(N,p,q) > 0$, such that
% \[
%   \left \|\brac{\sum_{j \in \Z} |\mathcal{M}f_j|^q}^{\frac{1}{q}} \right \|_{L^p(\R^N)} \leq C \left \|\brac{\sum_{j \in \Z} |f_j|^q}^{\frac{1}{q}} \right \|_{L^p(\R^N)}.
% \]
% An analogous statement holds for sequences $(f_j)_{j =-K}^K$ with constants independent of $K$.
% \end{theorem}
% \end{comment}

\subsection{Fractional Laplacian}
For $s > 0$ denote by $\laps{s}$ the operator with Fourier symbol $|\xi|^s$, that is
\[
 \mathcal{F}(\laps{s} f)(\xi) := |\xi|^s \mathcal{F} f(\xi).
\]
It is well-known that there is an integral formula for the fractional Laplacian when $s \in (0,2)$, cf. \cite{Hitchhiker}. We need the following estimate on the constant that appears there.

\begin{lemma}\label{la:fraclap}
Let $f \in \Sw(\R^N)$ and $s \in (0,1)$. Then
\[
 (-\lap)^s f(x) = c_{N,s} \int_{\R^N} \frac{2f(x)-f(x+z)-f(x-z)}{|z|^{N+2s}}\, dz,
\]
where 
\[
 c_{N,s} \aeq \min\{s, 1-s\}.
\]
\end{lemma}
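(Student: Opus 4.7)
The identity itself, for $f \in \Sw(\R^N)$, is a standard consequence of Fourier inversion (see e.g.\ Di Nezza--Palatucci--Valdinoci); the content of the lemma is the asymptotic $c_{N,s} \aeq \min\{s,1-s\}$. My plan is to pin down $c_{N,s}$ via the Fourier side. Using $\widehat{f(\cdot \pm z)}(\xi) = e^{\pm iz\cdot\xi}\hat f(\xi)$ together with Fubini (which is legitimate for $s \in (0,1)$ by the absolute integrability of $(1-\cos z_1)|z|^{-N-2s}$), the proposed formula is equivalent to the multiplier identity
\[
|\xi|^{2s} = c_{N,s}\int_{\R^N} \frac{2(1-\cos(z\cdot\xi))}{|z|^{N+2s}}\,dz.
\]
By rotation invariance in $\xi$ and the substitution $w = |\xi|z$, the right side equals $c_{N,s}K_{N,s}|\xi|^{2s}$, where
\[
K_{N,s} := 2\int_{\R^N}\frac{1-\cos z_1}{|z|^{N+2s}}\,dz.
\]
Hence $c_{N,s} = K_{N,s}^{-1}$ and the lemma reduces to showing $K_{N,s} \aeq \min\{s,1-s\}^{-1}$.

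For the \emph{upper bound} on $K_{N,s}$ I split at $|z|=1$: on $\{|z|\leq 1\}$ use $1-\cos z_1 \leq |z|^2/2$ to obtain a contribution $\aleq \int_0^1 r^{1-2s}\,dr \aeq (1-s)^{-1}$; on $\{|z|>1\}$ use $|1-\cos z_1|\leq 2$ to obtain $\aleq \int_1^\infty r^{-1-2s}\,dr \aeq s^{-1}$. For the \emph{lower bound} in the regime $s\uparrow 1$, I restrict to $\{|z|\leq 1/2\}$, bound $1-\cos z_1$ from below by $c_0 z_1^2$, and average $z_1^2/|z|^2$ over spheres to get a contribution $\ageq \int_0^{1/2}r^{1-2s}\,dr\ageq (1-s)^{-1}$. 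For $s\downarrow 0$, I restrict to $\{|z|>1\}$ and estimate
\[
K_{N,s} \ageq 2\int_{|z|>1}\frac{dz}{|z|^{N+2s}} - 2\left|\int_{|z|>1}\frac{\cos z_1}{|z|^{N+2s}}\,dz\right|,
\]
whose first term is $\aeq s^{-1}$, while the second is bounded uniformly in $s$ near $0$ by an integration by parts in $z_1$ along slices $z'=\mathrm{const}$ (exploiting that $\partial_{z_1}^2|z|^{-N-2s}$ is absolutely integrable on $\{|z_1|\geq 1\}$ with a bound uniform in $s$ small). Since $K_{N,s}$ is continuous and positive on compact subintervals of $(0,1)$, the two one-sided estimates glue together into $K_{N,s}\ageq \min\{s,1-s\}^{-1}$.

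The only mildly delicate step is the oscillatory-integral bound as $s\downarrow 0$; I expect this to be the main obstacle in the elementary approach. It can be short-circuited via the classical closed-form computation, which I plan to present as an alternative: representing $|z|^{-N-2s}=\Gamma(\tfrac{N}{2}+s)^{-1}\int_0^\infty t^{\tfrac{N}{2}+s-1}e^{-t|z|^2}\,dt$, evaluating the resulting Gaussian integrals in $z$, and integrating by parts once in $t$ yields
\[
K_{N,s} = \frac{2\pi^{N/2}\Gamma(1-s)}{s\,2^{2s}\,\Gamma(\tfrac{N}{2}+s)},
\]
from which $K_{N,s}\aeq s^{-1}$ as $s\to 0$ and $K_{N,s}\aeq (1-s)^{-1}$ as $s\to 1$ follow directly from $\Gamma(1-s)\to 1$ and $\Gamma(1-s)\sim(1-s)^{-1}$ respectively. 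Either route delivers $c_{N,s}\aeq \min\{s,1-s\}$.
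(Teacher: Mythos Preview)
Your argument is correct. Both the elementary estimate of $K_{N,s}$ by splitting at $|z|=1$ and the closed-form computation via the heat-kernel representation of $|z|^{-N-2s}$ go through; in particular, for the oscillatory piece as $s\downarrow 0$ one can split $\{|z|>1\}$ into $\{|z_1|\geq 1\}$ (where integration by parts in $z_1$ applies, with bounded boundary terms at $z_1=\pm 1$) and $\{|z|>1,\ |z_1|<1\}$ (where in polar coordinates the angular measure of $\{|\omega_1|<1/r\}$ is $\aeq r^{-1}$, so the integral $\aleq \int_1^\infty r^{-2-2s}\,dr$ converges uniformly in $s\geq 0$ without using oscillation at all).

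The paper's proof is considerably shorter and takes a different route: it simply quotes the explicit formula
\[
c_{N,s} = \frac{1}{2}\,\frac{4^s\,\Gamma(\tfrac{N}{2}+s)}{\pi^{N/2}\,|\Gamma(-s)|}
\]
from \cite{Hitchhiker} and then invokes the reflection identity $\Gamma(-s)=-\dfrac{\pi}{\sin(\pi s)\,\Gamma(1+s)}$ to obtain $c_{N,s}\aeq |\sin(\pi s)|\aeq \min\{s,1-s\}$ in one line. Your closed-form alternative is essentially equivalent to this (indeed your $K_{N,s}^{-1}$ agrees with the quoted $c_{N,s}$ after using $|\Gamma(-s)|=\Gamma(1-s)/s$). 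What your elementary approach buys is self-containedness: you get the asymptotic without ever knowing the exact constant, at the price of a page of estimates instead of a one-line Gamma-function manipulation.
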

\begin{proof}
For $s \in (0,1)$ we have (see e.g. \cite{Hitchhiker})
\[
 (-\lap)^s f(x) = c_{N,s} \int_{\R^N} \frac{2f(x)-f(x+z)-f(x-z)}{|z|^{N+2s}}\, dz, \quad
 c_{N,s} = \frac{1}{2} \frac{4^{s} \Gamma(\frac{N}{2}+s)}{\pi^{\frac{N}{2}} |\Gamma(-s)|}.
\]
Since
\[
 \Gamma(-s) = -\frac{\pi}{\sin(\pi s)} \frac{1}{\Gamma(1+s)},
\]
for $s \in (0,1)$ we get 
$c_{N,s}   \aeq |\sin(\pi s)| \aeq \min\{s, 1-s\}.$
\end{proof}

\begin{comment}
It is also worth recording that $\partial_\alpha = \Rz_\alpha \laps{1}$ where $\Rz_\alpha$ is the $\alpha$-th Riesz transform which is a bounded operator from $L^p(\R^N) \to L^p(\R^N)$ whenever $p \in (1,\infty)$. Since moreover $\sum_{\alpha =1}^N\Rz_\alpha \Rz_\alpha = - c_N I$ we have 
\[
 [f]_{\dot H^{1,p}(\R^N)} := \|\laps{1} f\|_{L^p(\R^N)} \aeq \|\nabla f\|_{L^p(\R^N)} \quad \forall p \in (1,\infty),\ f \in \Sw(\R^N).
\]
\end{comment}

\subsection{Littlewood-Paley projections and Triebel-Lizorkin Spaces}\label{s:triebelprelim}
{Below we will need to understand the space $L^p(\R^N)$ and the inhomogeneous Sobolev space
\[
H^{1,p}(\R^N) := \{f \in L^p(\R^N) \colon \nabla f \in L^p(\R^N)\}
\]
for $1 < p < \infty$, via Bessel potentials and Triebel-Lizorkin spaces.}

{First, recall the Bessel potential $(I-\Delta)^{s/2}$, given by the Fourier multiplier $(1+|\xi|^2)^{s/2}$ for $s \in \R$. We have $(I-\Delta)^{s/2} : \Sw(\R^N) \to \Sw(\R^N)$ continuously, thus $(I-\Delta)^{s/2}$ extends by duality to a map that acts on tempered distributions $\Sw'(\R^N)$.
For $1 < p < \infty$, it is known that $f \in \Sw'(\R^N)$ with $(I-\Delta)^{1/2} f \in L^p(\R^N)$ if and only if $f \in L^p(\R^N)$ with distributional gradient $\nabla f \in L^p(\R^N)$. This motivates one to define, for $s \in \R$ and $1 < p < \infty$, the space $H^{s,p}(\R^N)$, as the space of all tempered distributions $f \in \Sw'(\R^N)$ for which 
\[
\|f\|_{H^{s,p}(\R^N)} := \|(I-\Delta)^{s/2} f\|_{L^p(\R^N)} < \infty.
\]
When $s = 1$, the definition of $H^{s,p}(\R^N)$ agrees with our earlier definition in the previous paragraph using distributional gradients. We also have
\[
\|f\|_{H^{1,p}(\R^N)} \aeq_{p,N} \|f\|_{L^p(\R^N)} + \|\nabla f\|_{L^p(\R^N)}
\]
for $f \in H^{1,p}(\R^N)$.
}

%First, recall that a function $f$ defined (almost everywhere) on $\R^N$ is called a tempered function if there exists $C, L > 0$ such that $|f(x)| \leq C(1+|x|)^L$ for all $x \in \R^N$. The set of tempered functions is a subset of the set of all tempered distributions on $\R^N$, denoted $\Sw'(\R^N)$.} For a {tempered distribution} %function $f \in \Sw(\R^N)$
%$f \in {\Sw'}(\R^N)$,
{Next, for a function $f \in L^p(\R^N)$,} the $j$-th Littlewood-Paley projection is defined as
\[
 \Delta_j f(x) := {f*[2^{jN} \eta (2^j \cdot)](x).} % 2^{jN}\int_{\R^N} \eta(2^{j}z)\, f(x-z)\, dz.
\]
Here $\eta \in \Sw(\R^N)$ is a Schwartz function such that its Fourier transform $\mathcal{F} \eta$ satisfies
\begin{equation}\label{eq:sumpeq1}
 \sum_{j \in \Z} (\mathcal{F} \eta)(2^{j} \xi) = 1 \quad \forall \xi \neq 0.
\end{equation}
%{See e.g. \cite{RS96} for more details.}
It is customary to assume $\eta \in \Sw(\R^N)$ is real-valued and symmetric in the sense that
\begin{equation}\label{eq:pjsymmetry}
\eta(-z) = \eta(z),
\end{equation}
so that $\Delta_j$ is a self-adjoint operator with respect to the $L^2(\R^N)$-scalar product.
We can and will also assume that 
{$\mathcal{F}\eta(\xi) = \mathcal{F}\eta_0(\xi)-\mathcal{F}\eta_0(2\xi)$ for some Schwartz function $\eta_0$ with $\mathcal{F}\eta_0(\xi) = 1$ for $|\xi| \leq 1$ and $\mathcal{F}\eta_0(\xi) = 0$ for $|\xi| \geq 2$.}
%$\mathcal{F}\eta(\xi) \equiv 0$ unless $\frac{6}{7} \leq |\xi| \leq 2$. 
In particular 
\begin{equation}\label{eq:LPprojmv}
 \int_{\R^N} \eta(x)\ dx = c\, \mathcal{F}(\eta)(0) = 0.
\end{equation}
Also we have 
\begin{equation}\label{eq:Deltajsupp}
 \supp \mathcal{F}(\Delta_j f) \subset \left \{\xi \in \R^N: \quad %\frac{6}{7}
 {\frac{1}{2}} \leq |2^{-j} \xi| \leq 2 \right \}.
\end{equation}
In particular, $\mathcal{F} \eta (2^j \xi) \mathcal{F} \eta (2^{j+\ell} \xi) = 0$ whenever $|\ell|\geq 2$, and thus
\begin{equation}\label{eq:LPproj}
 \Delta_j f(x) = \sum_{\ell = j-1}^{j+1} \Delta_j \Delta_\ell f(x).
\end{equation}
Then we have for any $f \in \Sw(\R^N)$, see \cite[Exercise 1.1.4]{Gmodern14},
\begin{equation}\label{eq:fpointwiselittlewoodpaley}
 f(x) = \sum_{j \in \Z} \Delta_j f(x) \quad \forall x \in \R^N,
\end{equation}
and the convergence is in %$W^{L,q}(\R^N)$
{$L^p(\R^N)$} for any %$L \geq 0$,
$p \in (1,\infty]$. % (see \Cref{la:Deltajfdecay}).
{In particular, the set of all Schwartz functions whose Fourier transform is supported in a compact subset of $\R^N \setminus \{0\}$ is dense in $L^p(\R^N)$ if $1 < p < \infty$.}
For further reading on Littlewood-Paley projection we refer to \cite[6.2.2]{Gmodern14}. {Below we write $\Delta_{\leq 0}f$ for $f*\eta_0$.}

\begin{definition}[Triebel-Lizorkin {space}] %norm]
Let $s \in \R$, $p,q \in (1,\infty)$. Then the {inhomogeneous}
%homogeneous 
Triebel-Lizorkin {space $F^s_{p,q}$} is defined as the set of all {tempered distributions $f \in \Sw'(\R^N)$ such that }
\[
\|f\|_{F^{s}_{p,q}(\R^N)} :=  \brac{\int_{\R^N} \brac{|\Delta_{\leq 0} f(x)|^q + \sum_{j \geq 1} 2^{jsq} |\Delta_j f(x)|^q}^{\frac{p}{q}}\, dx}^{\frac{1}{p}} < \infty.
\]
%semi-norm $\dot{F}^{s}_{{p,}q}$ is defined, for a
%Schwartz function $f \in \Sw(\R^N)$, by
%For tempered distributions $[\cdot]_{\dot{F}^{s}_{{p,}q}(\R^N)} $ is only a seminorm since it vanishes on polynomials, indeed $\Delta_j$ applied to any polynomial is zero.
\end{definition}

{For a tempered distribution $f \in \Sw'(\R^N)$ we also define its homogeneous Triebel-Lizorkin semi-norm as follows:
\[
 [f]_{\dot{F}^{s}_{{p,}q}(\R^N)} := \brac{\int_{\R^N} \brac{\sum_{j \in \Z} 2^{jsq} |\Delta_j f(x)|^q}^{\frac{p}{q}}\, {dx}}^{\frac{1}{p}}.
\]
It is known, for instance, that if $s > 0$, $p,q \in (1,\infty)$ and $f \in F^s_{p,q}(\R^N)$, the homogeneous Triebel-Lizorkin semi-norm $[f]_{\dot{F}^s_{p,q}(\R^N)}$ is finite and $[f]_{\dot{F}^s_{p,q}(\R^N)} \lesssim_{s,p,q,N} \|f\|_{F^s_{p,q}(\R^N)}$.}

The class of Triebel-Lizorkin spaces and Besov spaces (where the role of integral and sum are reversed) contains several classical function spaces, we refer e.g. to \cite[\textsection 2.1.2, p.14]{RS96} or \cite[\textsection 2.3.5]{T83}. A well-known function space that is of Triebel-Lizorkin type is $L^p(\R^N)$ for $1 < p < \infty$: the theory of H\"{o}rmander-Mikhlin multipliers implies{, for $1 < p < \infty$, that $L^p(\R^N) = F^0_{p,2}(\R^N)$ and $H^{1,p}(\R^N) = F^1_{p,2}(\R^N)$ with equivalence of norms. Furthermore:}
\begin{lemma}[Littlewood-Paley]\label{la:LittlewoodPaley}
Let $p \in (1,\infty)$. Then %{a tempered function $f$ on $\R^N$ belongs to $L^p(\R^N)$, if and only if $[f]_{\dot{F}^0_{p,2}(\R^N)} < \infty$, in which case we have}
for every $f \in L^p(\R^N)$ it holds
\[
\|f\|_{L^p(\R^N)} \aeq [f]_{{\dot{F}^{0}_{p,2}}(\R^N)}.% = \brac{\int_{\R^N} \brac{\sum_{j \in \Z} |\Delta_j f(x)|^2}^{\frac{p}{2}}\, dx}^{\frac{1}{p}}.
\]
Similarly, %for every $s > 0$, $1 < p, q < \infty$ and $f \in \Sw(\R^N)$, we have $[f]_{\dot{F}^s_{p,q}} \aeq [(-\Delta)^{\frac{s}{2}} f]_{\dot{F}^0_{p,q}}$ -- where we emphasize that the constants are independent of $s$.
{a function $f \in L^p(\R^N)$ is in $H^{1,p}(\R^N)$, if and only if $[f]_{\dot{F}^1_{p,2}(\R^N)} < \infty$, in which case \[\|\nabla f\|_{L^p(\R^N)} \aeq [f]_{\dot{F}^1_{p,2}(\R^N)}.\]
The implicit constants in these equivalences depend only on $p$ and $N$.}
\end{lemma}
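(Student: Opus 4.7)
The lemma is a classical Littlewood--Paley statement. My plan is to establish the $L^p$ characterization first, and then deduce the $H^{1,p}$ equivalence from it together with $L^p$-boundedness of the Riesz transforms and a comparison of two Littlewood--Paley-type decompositions.

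\textbf{Step 1: the $L^p$ equivalence.} To prove $[f]_{\dot{F}^{0}_{p,2}} \aleq \|f\|_{L^p}$, I introduce Rademacher signs $\epsilon=(\epsilon_j)_{j\in\Z}$ and the randomized operator $T_\epsilon f := \sum_j \epsilon_j \Delta_j f$. Its Fourier multiplier $m_\epsilon(\xi)=\sum_j \epsilon_j (\mathcal{F}\eta)(2^{-j}\xi)$ satisfies the H\"ormander--Mikhlin scaling-invariant derivative bounds uniformly in $\epsilon$: at each $\xi$ only $O(1)$ summands are nonzero by the annular support \eqref{eq:Deltajsupp}. Hence $\|T_\epsilon f\|_{L^p} \aleq \|f\|_{L^p}$ uniformly in $\epsilon$. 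Averaging $\|T_\epsilon f\|_{L^p}^p$ over $\epsilon$ and applying Khintchine's inequality pointwise in $x$ via Fubini converts the Rademacher sum into the $\ell^2$ square function, giving the desired upper bound. For the reverse inequality I dualize: given $g\in L^{p'}$ with $\|g\|_{L^{p'}}=1$, I expand $\langle f,g\rangle=\sum_{j,k}\langle\Delta_j f,\Delta_k g\rangle$, restrict to $|j-k|\leq 1$ using \eqref{eq:LPproj}, and apply Cauchy--Schwarz in $j$ followed by H\"older in $x$ to get $\langle f,g\rangle \aleq [f]_{\dot F^0_{p,2}}\,[g]_{\dot F^0_{p',2}} \aleq [f]_{\dot F^0_{p,2}}$, invoking the already-proven upper bound for $g\in L^{p'}$.

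\textbf{Step 2: the $H^{1,p}$ equivalence.} Writing $\partial_k = R_k(-\Delta)^{1/2}$ with $R_k$ the Riesz transform (bounded on $L^p$ for $1<p<\infty$), one has $\|\nabla f\|_{L^p} \aeq \|(-\Delta)^{1/2}f\|_{L^p}$, so it suffices to show $\|(-\Delta)^{1/2}f\|_{L^p} \aeq [f]_{\dot F^1_{p,2}}$. Define an auxiliary Schwartz function $\tilde\eta$ by $\mathcal{F}\tilde\eta(\xi):=|\xi|(\mathcal{F}\eta)(\xi)$, which is smooth and compactly supported in the same annulus as $\mathcal{F}\eta$. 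Setting $\widetilde\Delta_j f := f*2^{jN}\tilde\eta(2^j\cdot)$, a Fourier-side computation gives $\Delta_j(-\Delta)^{1/2}f = 2^j\widetilde\Delta_j f$. Applying Step 1 to $(-\Delta)^{1/2}f$ yields
\[
\|(-\Delta)^{1/2}f\|_{L^p} \aeq \Bigl\|\Bigl(\sum_{j\in\Z} 2^{2j}|\widetilde\Delta_j f|^2\Bigr)^{1/2}\Bigr\|_{L^p}.
\]
To compare this with $[f]_{\dot F^1_{p,2}}$, I use that $\widetilde\Delta_j = \widetilde\Delta_j\sum_{|\ell-j|\leq C}\Delta_\ell$ (again by annular support) and the pointwise majorization $|\widetilde\Delta_j f|(x)\aleq \sum_{|\ell-j|\leq C} M(\Delta_\ell f)(x)$, where $M$ is the Hardy--Littlewood maximal function. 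The vector-valued Fefferman--Stein inequality then gives one direction; the reverse direction is obtained by reversing the roles of $\eta$ and $\tilde\eta$ via a Calder\'on reproducing formula that expresses $\mathcal{F}\eta$ as a smooth multiple of $\mathcal{F}\tilde\eta$ in the relevant annulus.

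\textbf{Main obstacle.} The subtlest point is the comparison of the two dyadic square functions built from $\Delta_j$ and $\widetilde\Delta_j$ in Step 2: no direct pointwise inequality is available, so one genuinely needs the vector-valued Fefferman--Stein maximal inequality (or, equivalently, to repeat the full Khintchine--Mikhlin argument of Step 1 with $\tilde\eta$ in place of $\eta$). A secondary technical issue is the convergence at low frequencies of the homogeneous sum $\sum_{j\in\Z}\Delta_j f$ for $f\in L^p$; this is handled by the inhomogeneous splitting $f = \Delta_{\leq 0}f + \sum_{j\geq 1}\Delta_j f$ in the definition of $F^s_{p,q}$, together with the inhomogeneous identification $L^p = F^0_{p,2}$ already implicit in Step 1.
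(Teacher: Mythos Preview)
The paper does not give its own proof of this lemma: it simply records it as well known, stating just before the lemma that ``the theory of H\"ormander--Mikhlin multipliers implies, for $1<p<\infty$, that $L^p(\R^N)=F^0_{p,2}(\R^N)$ and $H^{1,p}(\R^N)=F^1_{p,2}(\R^N)$ with equivalence of norms.'' Your outline is a correct and standard way to flesh out exactly that remark: Step~1 is the usual Khintchine--randomization proof of the Littlewood--Paley square-function estimate via H\"ormander--Mikhlin, and Step~2 is precisely the content of the paper's \Cref{la:FspeqF0plaps} (the isomorphism $\laps{s}\colon \dot F^{t+s}_{p,q}\to\dot F^t_{p,q}$) specialized to $s=1$, $q=2$, combined with $\|\nabla f\|_{L^p}\aeq\|\laps{1}f\|_{L^p}$ via Riesz transforms. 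So there is nothing to compare: your argument is in the same spirit as what the paper cites, just written out in more detail.
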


{
For $0 < s < 1$ and $1 < p < \infty$, we also have $H^{s,p}(\R^N) = F^s_{p,2}(\R^N)$, with 
\[
\|f\|_{H^{s,p}(\R^N)} \aeq \|f\|_{F^s_{p,2}(\R^N)}
\]
where the constants depend on $p$ and $N$ (and uniform \chgdarm{over} $s \in (0, 1)$).
}

{Recall the Gagliardo semi-norm
\[
[f]_{\dot{W}^{s,p}(\R^N)} = \left( \int_{\R^N} \int_{\R^N} \frac{|f(x)-f(y)|^p}{|x-y|^{N+sp}} dx dy \right)^{1/p}
\]
from the Introduction.}
It is also known that the following identification holds for $s \in (0,1)$, $p \in (1,\infty)$:
\[
 [f]_{\dot{F}^{s}_{p,p}(\R^N)} \aeq_{s,p,{N}} [f]_{{\dot{W}}^{s,p}(\R^N)} \quad \forall s \in (0,1), \ f \in \Sw(\R^N),
\]
and it is the objective of the present work to understand the dependency of the constant on $s$. It is important to observe that $F^{1}_{p,p}$ does \emph{not} correspond to the classical Sobolev space $H^{1,p}(\R^N) = F^{1}_{p,2}(\R^N)$, unless $p=2$.

%{For $0 < s < 1$ and $1 < p < \infty$ we now define} yet another function space {$H^{s,p}(\R^N)$, which is the space of all tempered distributions $f \in \Sw'(\R^N)$ for which
%\[
%\|f\|_{H^{s,p}(\R^N)} := \|(I - \Delta)^{s/2} f\|_{L^p(\R^N)} < \infty.
%\] 
%Here $(I-\Delta)^{s/2}$ is the Bessel potential, given by a Fourier multiplier $(1+|\xi|^2)^{s/2}$. It turns out that $H^{s,p}(\R^N)$ is equal to the Triebel-Lizorkin space ${F^{s}_{p,2}}(\R^N)$}, and not be confused with the \emph{Besov-space} $W^{s,p}(\R^N)$.
%$H^{s,p}(\R^N)$ is given by functions $f \in L^p(\R^N)$ such that $\laps{s} f \in L^p(\R^N)$, and the norm
%\[
% \|f\|_{H^{s,p}(\R^N)} := \|f\|_{L^p(\R^N)} + \|\laps{s} f\|_{L^p(\R^N)}.
%\]
%When $s = 1$, we also have
%\[
%\|\nabla f\|_{L^p(\R^N)} \aeq  \brac{\int_{\R^N} \brac{\sum_{j \in \Z} |\nabla \Delta_j f(x)|^2}^{\frac{p}{2}}\, dx}^{\frac{1}{p}} \aeq \|f\|_{\dot{F}^1_{p,2}}.
%\]

We will need the following well-known vector-valued estimate for Littlewood-Paley projections, which follows from a vector-valued singular integral estimate (see e.g. \cite[Chapter II.5.4]{MR1232192}):

\begin{lemma}\label{lem:vv}
For any $1 < p < \infty$ and any $(f_j)_{j \in \Z} \in L^p(\ell^2)$, we have
\[
\brac{\int_{\R^N} \brac{\sum_{j \in \Z} |\Delta_j f_j(x)|^2}^{\frac{p}{2}}\, dx}^{\frac{1}{p}} 
\aleq  \brac{\int_{\R^N} \brac{\sum_{j \in \Z} |f_j(x)|^2}^{\frac{p}{2}}\, dx}^{\frac{1}{p}}
\]
\end{lemma}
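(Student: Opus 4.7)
The plan is to realize the map $T\colon (f_j)_{j\in\Z} \mapsto (\Delta_j f_j)_{j \in \Z}$ as a vector-valued singular integral operator, and then invoke the standard vector-valued Calder\'on--Zygmund theory to pass from an easy $L^2$ estimate to the desired $L^p$ bound for all $p \in (1,\infty)$.

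First I would verify $L^2$-boundedness: since
\[
\|T(f_j)\|_{L^2(\ell^2)}^2 = \sum_{j \in \Z} \|\Delta_j f_j\|_{L^2(\R^N)}^2
\]
and each $\Delta_j$ is a Fourier multiplier with $\|\mathcal{F}\eta(2^j\cdot)\|_{L^\infty} \leq \|\mathcal F \eta\|_{L^\infty} \lesssim 1$, Plancherel's theorem yields $\|T(f_j)\|_{L^2(\ell^2)} \lesssim \|(f_j)\|_{L^2(\ell^2)}$.

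Next I would identify the operator-valued kernel. Writing $\Delta_j f_j(x) = \int_{\R^N} 2^{jN} \eta(2^j(x-y)) f_j(y)\, dy$, the map $T$ is realized by the convolution kernel $K(z) \colon \ell^2 \to \ell^2$ which acts diagonally as multiplication by the sequence $(2^{jN}\eta(2^j z))_{j \in \Z}$. Its operator norm on $\ell^2$ equals $\sup_{j \in \Z} |2^{jN}\eta(2^j z)|$. Using the Schwartz decay of $\eta$, namely $|\eta(w)| \lesssim_M (1+|w|)^{-M}$ and $|\nabla \eta(w)| \lesssim_M (1+|w|)^{-M}$ for any $M$, one optimizes over $j$ around the scale $2^j \sim |z|^{-1}$ to obtain
\[
\|K(z)\|_{\ell^2 \to \ell^2} \lesssim |z|^{-N}, \qquad \|\nabla K(z)\|_{\ell^2 \to \ell^2} \lesssim |z|^{-N-1}
\]
for all $z \neq 0$. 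These are precisely the size and regularity estimates required by the vector-valued Calder\'on--Zygmund theorem.

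With $L^2$-boundedness and the operator-valued H\"ormander-type conditions in hand, the vector-valued Calder\'on--Zygmund theorem, as formulated for instance in \cite[Chapter II.5.4]{MR1232192}, promotes $T$ to a bounded operator on $L^p(\R^N; \ell^2)$ for every $p \in (1,\infty)$, which is exactly the claim. There is no real obstacle here beyond the standard bookkeeping; the only point requiring care is the uniform-in-$j$ estimate on the kernel, but that is a direct consequence of the Schwartz decay of $\eta$ together with the scaling homogeneity of the family $\{2^{jN}\eta(2^j\cdot)\}_{j \in \Z}$.
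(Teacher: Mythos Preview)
Your proposal is correct and follows exactly the route the paper indicates: the paper does not give a proof but simply states that the lemma ``follows from a vector-valued singular integral estimate (see e.g. \cite[Chapter II.5.4]{MR1232192})'', and you have supplied precisely those details---$L^2$ boundedness via Plancherel, the diagonal $\ell^2$-valued kernel with Calder\'on--Zygmund bounds coming from the Schwartz decay of $\eta$, and the appeal to the vector-valued Calder\'on--Zygmund theorem.
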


{For $s > 0$ and $1 < p < \infty$, the Fourier multiplier $|\xi|^s (1+|\xi|^2)^{-s/2}$ defines a bounded linear map on $L^p(\R^N)$ (see \cite[Chapter V]{MR0290095}). Thus one can define a bounded linear map $(-\Delta)^{s/2} \colon H^{s,p}(\R^N) \to L^p(\R^N)$.
}
\chgdt{It is known that $\laps{s}: \dot{F}^{t+s}_{p,q} \to \dot{F}^{t}_{p,q}$ is an isomorphism, see \cite[2.6.2, Proposition 2]{RS96} and \cite[5.2.3]{T83}, \cite[2.3.8]{T83}. Their argument {(basically a vector-valued multiplier theorem)} implies:
\begin{lemma}\label{la:FspeqF0plaps}
Let $p,q \in (1,\infty)$, $\Theta > 0$. Then for any $s \in [0,\Theta]$ and any $f \in \Sw(\R^N)$ we have
\[
[f]_{\dot{F}^{s}_{p,q}} \aeq [\laps{s} f]_{\dot{F}^{0}_{p,q}}.
\]
Also
\[
[f]_{\dot{F}^{s}_{p,q}} \aleq [f]_{\dot{F}^{0}_{p,q}} + \left \|\brac{\sum_{j \geq 0} |\Delta_j \laps{s} f|^q}^{\frac{1}{q}}\right \|_{L^p(\R^N)}.
\]
The constant depends on $p,q,N$ and $\Theta$, and is otherwise independent of $s$.
\end{lemma}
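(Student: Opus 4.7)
The idea is to realise $2^{-js}\Delta_j \laps{s}$ as a Littlewood--Paley-type Fourier multiplier whose symbol has derivatives bounded uniformly in $s\in[0,\Theta]$, and then apply a vector-valued maximal inequality. Fix an auxiliary $\tilde\eta\in \Sw(\R^N)$ with $\mathcal{F}\tilde\eta$ smooth, compactly supported in $\{1/4\le |\zeta|\le 4\}$, and identically $1$ on $\{1/2\le |\zeta|\le 2\}\supset \supp \mathcal{F}\eta$. A direct Fourier-side computation, using $\mathcal{F}\tilde\eta(2^{-j}\xi)\equiv 1$ on $\supp \mathcal{F}(\Delta_j f)$, yields two key identities
\[
\Delta_j \laps{s} f \,=\, 2^{js}\,M_j^s(\Delta_j f),\qquad N_j^s(\Delta_j \laps{s} f) \,=\, 2^{js}\,\Delta_j f,
\]
where $\widehat{M_j^s g}(\xi)=m_s(2^{-j}\xi)\,\widehat g(\xi)$ and $\widehat{N_j^s g}(\xi)=n_s(2^{-j}\xi)\,\widehat g(\xi)$, with $m_s(\zeta):=|\zeta|^s\mathcal{F}\tilde\eta(\zeta)$ and $n_s(\zeta):=|\zeta|^{-s}\mathcal{F}\tilde\eta(\zeta)$. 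Both $m_s$ and $n_s$ are smooth, supported in the fixed annulus $\{1/4\le |\zeta|\le 4\}$ away from the origin, with partial derivatives of every order bounded uniformly in $s\in[0,\Theta]$ (since $|\zeta|^{\pm s}$ and its $\zeta$-derivatives are uniformly bounded on any fixed compact set disjoint from the origin for $s$ in a bounded range). Consequently the inverse Fourier transforms $\mathcal{F}^{-1}m_s$ and $\mathcal{F}^{-1}n_s$ are Schwartz functions with Schwartz seminorms uniform in $s$, whence $|M_j^s g(x)|+|N_j^s g(x)|\aleq \M g(x)$ uniformly in $j\in\Z$ and $s\in[0,\Theta]$, where $\M$ denotes the Hardy--Littlewood maximal operator.

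\textbf{Vector-valued step.} The Fefferman--Stein vector-valued maximal inequality (valid for all $1<p,q<\infty$) then gives, uniformly in $s\in[0,\Theta]$,
\[
\left\|\Bigl(\sum_{j\in\Z}|M_j^s g_j|^q\Bigr)^{1/q}\right\|_{L^p(\R^N)} + \left\|\Bigl(\sum_{j\in\Z}|N_j^s g_j|^q\Bigr)^{1/q}\right\|_{L^p(\R^N)}\aleq \left\|\Bigl(\sum_{j\in\Z}|g_j|^q\Bigr)^{1/q}\right\|_{L^p(\R^N)}.
\]
Substituting $g_j=2^{js}\Delta_j f$ into the $M_j^s$ estimate gives $[\laps{s}f]_{\dot{F}^{0}_{p,q}}\aleq [f]_{\dot{F}^{s}_{p,q}}$, and substituting $g_j=\Delta_j \laps{s}f$ into the $N_j^s$ estimate gives $[f]_{\dot{F}^{s}_{p,q}}\aleq [\laps{s}f]_{\dot{F}^{0}_{p,q}}$. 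This establishes the first equivalence.

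\textbf{Second inequality.} Split the sum defining $[f]_{\dot{F}^{s}_{p,q}}$ at $j=0$. For $j<0$ and $s\ge 0$ we have $2^{jsq}\le 1$, hence
\[
\left\|\Bigl(\sum_{j<0}2^{jsq}|\Delta_j f|^q\Bigr)^{1/q}\right\|_{L^p}\le \left\|\Bigl(\sum_{j\in \Z}|\Delta_j f|^q\Bigr)^{1/q}\right\|_{L^p}=[f]_{\dot{F}^{0}_{p,q}}.
\]
For $j\ge 0$ apply the identity $2^{js}\Delta_j f = N_j^s \Delta_j \laps{s}f$ and the $N_j^s$-version of the uniform vector-valued estimate above, with sums restricted to $j\ge 0$, which bounds the remaining half by $\|(\sum_{j\ge 0}|\Delta_j \laps{s}f|^q)^{1/q}\|_{L^p}$. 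Summing the two pieces yields the claim. The main technical obstacle throughout is the uniformity of the multiplier bounds in $s$, but this reduces to the elementary fact, noted above, that $|\zeta|^{\pm s}$ varies smoothly in $s\in[0,\Theta]$ on the fixed compact support of $\mathcal{F}\tilde\eta$.
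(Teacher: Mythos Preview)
Your proof is correct and follows essentially the approach the paper indicates: the paper does not give a proof but only cites the standard references and notes that ``their argument (basically a vector-valued multiplier theorem)'' yields the lemma, and your argument is precisely a clean execution of that idea. Your reduction to pointwise domination $|M_j^s g|,|N_j^s g|\aleq \M g$ (uniform in $j$ and in $s\in[0,\Theta]$) followed by the Fefferman--Stein vector-valued maximal inequality is exactly one of the standard ways to realise such a vector-valued multiplier estimate for Triebel--Lizorkin spaces, and the splitting at $j=0$ for the second inequality is the natural move.
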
}

Next we need the following result about the Triebel-Lizorkin norm of a weak limit in $L^p$:

\begin{lemma}\label{la:triebelconv}
Let $f_k \in L^p(\R^N)$ weakly converge to $f \in L^p(\R^N)$, and assume that for some $s_k \uparrow t \in (0,\infty)$ we have 
\[
 \sup_{k} \, [f_k]_{\dot{F}^{s_k}_{{p,}q}{(\R^N)}} < \infty.
\]
Then 
\[
 [f]_{\dot{F}^{t}_{{p,}q}{(\R^N)}} \leq \sup_{k} \, [f_k]_{\dot{F}^{s_k}_{{p,}q}{(\R^N)}}.
\]
\end{lemma}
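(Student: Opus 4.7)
The plan is to use a duality-plus-testing argument, exploiting that each Littlewood-Paley projection $\Delta_j$ is convolution with a Schwartz function and hence interacts nicely with weak $L^p$ convergence.

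First I would reduce to the case of a finite band of frequencies. By monotone convergence,
\[
[f]_{\dot{F}^{t}_{p,q}(\R^N)}^p = \lim_{K \to \infty} \left\|\left(\sum_{|j|\leq K} 2^{jtq}|\Delta_j f|^q\right)^{1/q}\right\|_{L^p(\R^N)}^p,
\]
so it suffices to bound the truncated quantity on the right by $\Lambda := \sup_k [f_k]_{\dot{F}^{s_k}_{p,q}(\R^N)}$ uniformly in $K$. Fix $K \in \N$. By Proposition~\ref{pr:dualchar:Lplq} applied to the finite sequence $(2^{jt}\Delta_j f)_{|j|\leq K}$, there exists $(g_j)_{|j|\leq K}$ with $\|(g_j)\|_{L^{p'}(\ell^{q'})} \leq 1$ such that
\[
\left\|\left(\sum_{|j|\leq K} 2^{jtq}|\Delta_j f|^q\right)^{1/q}\right\|_{L^p} = \int_{\R^N} \sum_{|j|\leq K} 2^{jt}\Delta_j f(x)\, g_j(x)\, dx.
\]

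Next I would transfer the Littlewood-Paley projections onto the test sequence using the symmetry \eqref{eq:pjsymmetry}. Since each $\|g_j\|_{L^{p'}} \leq 1$ and the sum over $|j|\leq K$ is finite, the function $G := \sum_{|j|\leq K} 2^{jt}\Delta_j g_j$ lies in $L^{p'}(\R^N)$. The weak $L^p$ convergence $f_k \rightharpoonup f$ then gives
\[
\int_{\R^N} \sum_{|j|\leq K} 2^{jt}\Delta_j f(x)\,g_j(x)\,dx = \int_{\R^N} f\cdot G\,dx = \lim_{k\to\infty} \int_{\R^N} f_k\cdot G\,dx = \lim_{k\to\infty}\int_{\R^N}\sum_{|j|\leq K} 2^{jt}\Delta_j f_k(x)\,g_j(x)\,dx.
\]

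Finally I would replace $2^{jt}$ by $2^{js_k}$ up to a vanishing error. For $|j|\leq K$ we have $|2^{jt}-2^{js_k}| \leq C(K)(t-s_k)$, and by H\"older's inequality in $\ell^q$ and $L^p$ together with $\|(g_j)\|_{L^{p'}(\ell^{q'})} \leq 1$,
\[
\left|\int\sum_{|j|\leq K}(2^{jt}-2^{js_k})\Delta_j f_k\,g_j\,dx\right| \leq C(K)(t-s_k)\,2^{Kt}\,\Lambda \xrightarrow{k\to\infty} 0,
\]
whereas again by duality $\left|\int\sum_{|j|\leq K} 2^{js_k}\Delta_j f_k\, g_j\, dx\right| \leq [f_k]_{\dot{F}^{s_k}_{p,q}} \leq \Lambda$. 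Combining these yields the $K$-truncated estimate by $\Lambda$, and sending $K\to\infty$ concludes the proof.

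The only subtle point is justifying passage to the limit in $k$: the cleanest route is the one above, transferring $\Delta_j$ onto $g_j$ so that weak convergence tests against the fixed $L^{p'}$ function $G$. The error estimate $|2^{jt}-2^{js_k}| = O(t-s_k)$ on the bounded frequency band makes the replacement of $t$ by $s_k$ harmless.
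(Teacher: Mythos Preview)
Your proof is correct, and it takes a somewhat different route from the paper's.

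The paper truncates both in frequency ($|j|\le M$) and in space ($B(0,R)$). It then uses that weak $L^p$ convergence of $f_k$ implies pointwise convergence $\Delta_j f_k(x)\to\Delta_j f(x)$ for every $x$ (because $\Delta_j$ is convolution with a Schwartz kernel, which lies in $L^{p'}$), together with $2^{js_k}\to 2^{jt}$ on the finite band, to get pointwise convergence of the truncated $\ell^q$ expression; a Fatou-type argument on $B(0,R)$ then yields
\[
\bigl\|\|2^{jt}\Delta_j f\|_{\ell^q(|j|\le M)}\bigr\|_{L^p(B(0,R))}\le\sup_k[f_k]_{\dot F^{s_k}_{p,q}},
\]
and one lets $M,R\to\infty$.

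You instead truncate only in frequency and invoke the $L^p(\ell^q)$--$L^{p'}(\ell^{q'})$ duality (Proposition~\ref{pr:dualchar:Lplq}) to manufacture a single fixed test function $G=\sum_{|j|\le K}2^{jt}\Delta_j g_j\in L^{p'}$, so that weak convergence can be applied directly to $\int f_k\,G$. Both proofs rest on the same two ingredients --- finiteness of the frequency band to control $|2^{jt}-2^{js_k}|$, and good behaviour of $\Delta_j$ under weak convergence --- but your route sidesteps the passage from pointwise to norm convergence (which the paper treats rather tersely), at the modest cost of invoking the duality proposition. Either approach is perfectly adequate here.
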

\begin{proof}

For each fixed $M$ and $R$, 
\[
\left\| \|2^{jt} \Delta_j f\|_{\ell^q(-M \leq j \leq M)} \right\|_{L^p(B(0,R))} = \lim_{k \to \infty} \left\| \|2^{js_k} \Delta_j f_k\|_{\ell^q(-M \leq j \leq M)} \right\|_{L^p(B(0,R))} \leq \sup_{k} [f_k]_{\dot{F}^{s_k}_{{p,}q}(\R^N)}.
\]
In the estimate above, the middle equality follows from the fact that
\[
\begin{split}
\brac{\sum_{j = -M}^M \abs{2^{j s_k} \Delta_j f_k-2^{j t} \Delta_j f}^q}^{\frac{1}{q}}
 \aleq&\max_{j \in \{-M,\ldots,M\}} |2^{j s_k}-2^{jt}|\, \max_{j \in \{-M,\ldots,M\}} |\Delta_j f|\, \\
 &+\max_{j \in \{-M,\ldots,M\}} 2^{j s_k} \, \max_{j \in \{-M,\ldots,M\}} |\Delta_j f_k-\Delta_j f|
 \end{split}
\]
which as $k \to \infty$ converges to 0 pointwise almost everywhere in $B(0,R)$, and hence in $L^p(B(0,R))$, if $f_k$ converges weakly to $f$ on $L^p(\R^N)$.
\end{proof}

\subsection{A duality characterization for Triebel-Lizorkin spaces}
The following duality statement must be known to experts -- we did not find it in this precise form in the literature, and thus repeat the proof.

\begin{theorem}[Duality]\label{th:dual}
Let $s \geq 0$, $p,q \in (1,\infty)$.
For any $f \in \Sw(\R^N)$ there exist $g \in \mathcal{F}^{-1}(C_c^\infty(\R^N \backslash \{0\}))$  such that%$g \in \Sw(\R^N) \cap \mathcal{F}^{-1}(C_c^\infty(\R^N \backslash \{0\}))$  such that
\[
 [g]_{\dot{F}^{s}_{p',q'}(\R^N)}  \leq 1
\]
and
\begin{equation}\label{eq:dual:goal}
[f]_{\dot{F}^{s}_{p,q}(\R^N)} \aeq \abs{\int_{\R^N} \laps{s}f(x)\ \laps{s} g(x) dx}.
\end{equation}
The constants depend on $s,p,q,N$, however if for some $\theta > 0$ we have 
$s \in [0,\theta)$, $p,q \in (1+\frac{1}{\theta},\theta)$,
% {$s \in [\theta,\theta^{-1}]$, $p,q \in (1+\theta,\theta^{-1})$},
then the constant can be chosen only to depend on $\theta$ and $N$.
\end{theorem}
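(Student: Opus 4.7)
The plan is to combine Lemma~\ref{la:FspeqF0plaps}, which identifies
\[
[f]_{\dot{F}^{s}_{p,q}(\R^N)} \aeq [\laps{s} f]_{\dot{F}^{0}_{p,q}(\R^N)} = \|(\Delta_j \laps{s} f)_{j \in \Z}\|_{L^p(\ell^q)},
\]
with the dual representation of $L^p(\ell^q)$-norms from Proposition~\ref{pr:dualchar:Lplq}. Since $f \in \Sw(\R^N)$, the pieces $\Delta_j \laps{s} f$ decay geometrically as $j \to +\infty$ (by Schwartz regularity of $f$) and as $j \to -\infty$ (because $\mathcal{F}\eta$ vanishes to infinite order at the origin, cf.~\eqref{eq:LPprojmv} and the construction of $\eta$). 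Consequently, for any $\varepsilon > 0$ I can choose $K = K(f,\varepsilon)$ so that the truncated family $(\Delta_j \laps{s} f)_{|j|\leq K}$ realizes the $L^p(\ell^q)$-norm up to a factor of $(1-\varepsilon)$. Applying Proposition~\ref{pr:dualchar:Lplq} to this finite family yields $(G_j)_{|j|\leq K} \subset L^{p'}(\R^N)$ with $\|(G_j)\|_{L^{p'}(\ell^{q'})} \leq 1$ and
\[
\|(\Delta_j \laps{s} f)_{|j|\leq K}\|_{L^p(\ell^q)} = \int_{\R^N} \sum_{|j|\leq K} \Delta_j \laps{s} f(x)\, G_j(x)\, dx.
\]

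Next, by density of $\Sw(\R^N)$ in $L^{p'}(\R^N)$, I replace each $G_j$ by a Schwartz function $\tilde G_j$, chosen close enough in $L^{p'}$ that both the pairing above and the $L^{p'}(\ell^{q'})$-norm change by at most $\varepsilon$. I then define $h := \sum_{|j|\leq K} \Delta_j \tilde G_j$ and $g := \laps{-s} h$. Using the self-adjointness of each $\Delta_j$ coming from \eqref{eq:pjsymmetry}, one computes
\[
\int_{\R^N} \laps{s} f \cdot \laps{s} g \, dx = \int_{\R^N} \laps{s} f \cdot h \, dx = \sum_{|j|\leq K} \int_{\R^N} \Delta_j \laps{s} f \cdot \tilde G_j \, dx,
\]
whose absolute value is at least $(1-O(\varepsilon))\,[f]_{\dot{F}^s_{p,q}}$, giving the lower bound in \eqref{eq:dual:goal}. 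For the prescribed regularity: each $\tilde G_j \in \Sw$, so $h$ is Schwartz with $\mathcal{F} h = \sum_j \mathcal{F}\eta(2^{-j}\cdot)\, \mathcal{F}\tilde G_j$ a finite sum of Schwartz functions supported in the compact annulus $\{2^{-K-1} \leq |\xi| \leq 2^{K+1}\}$; hence $\mathcal{F} h \in C_c^\infty(\R^N\setminus\{0\})$, and $g = \laps{-s} h \in \mathcal{F}^{-1}(C_c^\infty(\R^N \setminus \{0\}))$ as well.

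It remains to verify the upper bound $|\int \laps{s} f \laps{s} g| \aleq [f]_{\dot{F}^s_{p,q}}$ and the norm bound $[g]_{\dot{F}^s_{p',q'}} \aleq 1$; after the latter, a harmless rescaling of $g$ by an absolute constant gives the required $\leq 1$. The upper bound follows from the Fourier localization $\Delta_j \Delta_\ell = 0$ for $|j-\ell| \geq 2$ (consequences of \eqref{eq:Deltajsupp}--\eqref{eq:LPproj}) combined with H\"older in $\ell^q$ and $L^p$. For the norm bound, Lemma~\ref{la:FspeqF0plaps} reduces matters to estimating $\|(\Delta_j h)_j\|_{L^{p'}(\ell^{q'})}$; writing $\Delta_j h = \sum_{m=-1}^{1} \Delta_j \Delta_{j+m} \tilde G_{j+m}$ and taking a triangle inequality in $\ell^{q'}$, the claim follows from the $\ell^{q'}$-valued vector-valued Littlewood-Paley inequality
\[
\|(\Delta_j F_j)_j\|_{L^{p'}(\ell^{q'})} \aleq \|(F_j)_j\|_{L^{p'}(\ell^{q'})}.
\]
The main technical obstacle will be precisely this last ingredient: the excerpt records Lemma~\ref{lem:vv} only for $\ell^2$-sums, whereas the argument requires the analogue for $\ell^{q'}$, which is standard via vector-valued Calder\'on--Zygmund theory \cite{MR1232192}. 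Once in hand, the constants therein---as well as those in Lemma~\ref{la:FspeqF0plaps} and in Proposition~\ref{pr:dualchar:Lplq}---depend continuously on $s,p,q$ and are uniform for $s \in [0,\theta)$ and $p,q$ bounded away from $1$ and $\infty$, yielding the stated uniformity of the constant.
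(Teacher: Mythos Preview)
Your proof is correct and follows essentially the same route as the paper: reduce to $\dot{F}^{0}_{p,q}$ via Lemma~\ref{la:FspeqF0plaps}, truncate to finitely many frequencies, invoke Proposition~\ref{pr:dualchar:Lplq}, approximate the dual sequence by smooth functions, and set $g=\laps{-s}\sum_{|j|\leq K}\Delta_j \tilde G_j$; the paper uses $C_c^\infty$ rather than Schwartz approximants, but since $\Delta_j$ localizes in an annulus this makes no difference. Your remark that the norm bound $[g]_{\dot{F}^{s}_{p',q'}}\aleq 1$ requires the $\ell^{q'}$-valued version of the vector-valued inequality is well taken---the paper also needs this but cites only the $\ell^2$ statement of Lemma~\ref{lem:vv}, so you have in fact identified and addressed a point the paper leaves implicit.
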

%\ipolam{If we only know $f \in \Sw(\R^N)$, then $(-\Delta)^{s/2} f$ may not be a Schwartz function (it may not decay fast enough at infinity), and some care would be necessary to make sense of the right hand side of \eqref{eq:dual:goal}.}
Observe that in \eqref{eq:dual:goal}, $\laps{s} g$ belongs to the Schwartz class, since $\mathcal{F} g \in C_c^\infty(\R^N \backslash \{0\})$, consequently $\mathcal{F} \laps{s} g \in C_c^\infty(\R^N \backslash \{0\})$, which implies $\laps{s} g \in \Sw(\R^N)$. Moreover, it is easy to check that $f \in \Sw(\R^N)$ implies that $\laps{s}f \in L^\infty(\R^N)$, so the integral on the {right} hand side of \eqref{eq:dual:goal} makes sense.

\begin{proof}[Proof of \Cref{th:dual}]
Once $g$ is found, the $\ageq$-direction follows from two applications of H\"older's inequality and definition of the associated spaces.

So we focus on the $\aleq$-direction. Let $f \in \Sw(\R^N)$. Then by \chgdt{\Cref{la:FspeqF0plaps}}  %\Cref{la:LittlewoodPaley} % then by \Cref{la:FspeqF0plaps}
\[
 [f]_{\dot{F}^{s}_{p,q}(\R^N)} \aeq [\laps{s} f]_{\dot{F}^{0}_{p,q}(\R^N)}.
\]
In particular $(\Delta_j \laps{s} f)_{j \in \Z} \in L^p(\ell^q)$. In the case that $[f]_{\dot{F}^{s}_{p,q}} = 0$ we have $f$ is zero since the only polynomial in $\Sw(\R^N)$ is zero, and thus \eqref{eq:dual:goal} is trivially true for any $g$.

Consequently, from now own we assume
$[\laps{s} f]_{\dot{F}^{0}_{{p,}q}(\R^N)} > 0.$
By monotone convergence theorem, there must be $K \in \N$, depending on $f$, such that 
\[
 [\laps{s} f]_{\dot{F}^{0}_{{p,}q}(\R^N)} \leq 2\ \brac{\int_{\R^N} \brac{\sum_{j=-K}^{K} |\Delta_j \laps{s} f(x)|^q}^{\frac{p}{q}}\, dx}^{\frac{1}{p}}.
\]
Applying \Cref{pr:dualchar:Lplq}, there exists $(\bar{h}_j)_{j=-K}^{K} \subset L^{p'}(\R^N)$ with
\[
 \brac{\int_{\R^N} \brac{\sum_{j=-K}^{K} |\bar{h}_j(x)|^{q'}}^{\frac{p'}{q'}} dx}^{\frac{1}{p'}} \leq 1,
\]
such that 
\[
 [\laps{s} f]_{\dot{F}^{0}_{{p,}q}(\R^N)} \leq 2 \abs{\int_{\R^N} \sum_{j=-K}^{K} \Delta_j \laps{s} f(x)\, \bar{h}_j(x)\, dx }.
\]
By density of $C^{\infty}_c(\R^N)$ in $L^{p'}(\R^N)$, there exists $h_{-K}, \dots, h_K \in C^{\infty}_c(\R^N)$ such that
\[
\left \|\brac{\sum_{j=-K}^K |h_j-\bar{h}_j|^{q'}}^{\frac{1}{q'}} \right \|_{L^{p'}(\R^N)} \leq \frac{1}{4}.
\]
\begin{comment}
For each $\bar{h}_j$ there exists $h_j \in C_c^\infty(\R^N)$ such that 
\[
 \|h_j - \bar{h}_j\|_{L^{p'}(\R^N)} \leq \frac{1}{2}  (2K+1)^{-\frac{1}{p}} 2^{-(|j|+2)\frac{1}{p'}}.
\]
Then
\[
\begin{split}
 \left \|\brac{\sum_{j=-K}^K |h_j-\bar{h}_j|^{q'}}^{\frac{1}{q'}} \right \|_{L^{p'}(\R^N)} \leq &\left \|\sum_{j=-K}^K |h_j-\bar{h}_j| \right \|_{L^{p'}(\R^N)} \\
 \leq &\left \|(2K+1)^{\frac{1}{p}} \brac{\sum_{j=-K}^K |h_j-\bar{h}_j|^{p'}}^{\frac{1}{p'}} \right \|_{L^{p'}(\R^N)} \\
 \leq &(2K+1)^{\frac{1}{p}}\, \brac{\sum_{j=-K}^K\left \| h_j-\bar{h}_j\right \|_{L^{p'}(\R^N)}^{p'}}^{\frac{1}{p'}} \\
 \leq &\frac{1}{2}\, \brac{\sum_{j=-K}^K 2^{-|j|-2} }^{\frac{1}{p'}} \\
 \leq&\frac{1}{2}.
 \end{split}
\]
\end{comment}
Consequently,
\begin{equation} \label{eq:hbdd}
 \brac{\int_{\R^N} \brac{\sum_{j=-K}^{K} |h_j(x)|^{q'}}^{\frac{p'}{q'}} dx}^{\frac{1}{p'}} \leq \frac{5}{4}
\end{equation}
and
\[
 [\laps{s} f]_{\dot{F}^{0}_{{p,}q}(\R^N)} \leq 2 \abs{\int_{\R^N} \sum_{j=-K}^{K} \Delta_j \laps{s} f(x)\, h_j(x)\, dx } + \frac{1}{2} [\laps{s} f]_{\dot{F}^{0}_{{p,}q}(\R^N)},
\]
which implies 
\[
 [\laps{s} f]_{\dot{F}^{0}_{{p,}q}(\R^N)} \leq 4 \abs{\int_{\R^N} \sum_{j=-K}^{K} \Delta_j \laps{s} f(x)\, h_j(x)\, dx }.
\]
With an integration by parts (in this case this is just Fubini's theorem, using also symmetry \eqref{eq:pjsymmetry}),
\[
 \int_{\R^N} \Delta_j \laps{s}f(x) h_j (x)\, dx = \int_{\R^N} \laps{s} f(x)\, \Delta_j h_j (x)\, dx.
\]
Now we set 
\[
 h := \sum_{j={-K}}^K \Delta_j h_j (x), \quad \text{and} \quad
 g :=  (-\Delta)^{-\frac{s}{2}} h.
\]
Then clearly $g \in \mathcal{F}^{-1}[C^{\infty}_c(\R^N \setminus \{0\})] \subset \Sw(\R^N)$, and the above shows that
\[
 [\laps{s} f]_{\dot{F}^{0}_{{p,}q}} \leq 4 \abs{\int_{\R^N} \laps{s} f(x)\, h(x)\, dx } = 4 \abs{\int_{\R^N} \laps{s} f(x)\, \laps{s} g(x)\, dx }.
\]
Furthermore, 
\[
[g]_{\dot F^s_{p,q}} \aeq [h]_{\dot F^0_{p,q}} \aeq \max_{\ell = -1,0,1} \brac{\int_{\R^N} \brac{\sum_{j=-K}^K |\Delta_{j+\ell} \Delta_j h_j(x)|^{q'}}^{\frac{p'}{q'}}dx}^{\frac{1}{p'}}
\]
By \Cref{lem:vv} and \eqref{eq:hbdd}, we then have $[g]_{\dot F^s_{p,q}} \aleq 1$. This completes the proof of this theorem.
\end{proof}

We also obtain the inhomogeneous version of \Cref{th:dual}.

\begin{theorem}[Inhomogeneous Duality Estimate]\label{th:dualinhom}
Let $s \geq 0$, $p,q \in (1,\infty)$.
For any $f \in \Sw(\R^N)$ there exist $g \in \Sw(\R^N)$ with $\mathcal{F} g$ supported on $\{|\xi| \geq 1/4\}$  such that %$g \in \Sw(\R^N) \cap \mathcal{F}^{-1}(C_c^\infty(\R^N \backslash \{0\}))$  such that
\[
 %[g]_{\dot{F}^{0}_{p',q'}}+
 [g]_{\dot{F}^{s}_{p',q'}}  \leq 1
\]
and
\[
[f]_{\dot{F}^{s}_{p,q}(\R^N)} \aleq [f]_{\dot{F}^{0}_{p,q}(\R^N)} + \abs{\int_{\R^N} \laps{s}f(x)\ \laps{s} g(x) \ dx}.
\]
The constants depend on $s,p,q,N$, however if for some $\theta > 0$ we have 
$s \in [0,\theta)$, $p,q \in (1+\frac{1}{\theta},\theta)$,
%{$s \in [\theta,\theta^{-1}]$, $p,q \in (1+\theta, \theta^{-1})$,}
then the constant can be chosen only to depend on $\theta$ and $N$.
\end{theorem}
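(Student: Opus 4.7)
The strategy is to reduce to the homogeneous setting via the inhomogeneous comparison
\[
[f]_{\dot{F}^{s}_{p,q}(\R^N)} \aleq [f]_{\dot{F}^{0}_{p,q}(\R^N)} + \left\|\left(\sum_{j \geq 0} |\Delta_j \laps{s} f|^q\right)^{1/q}\right\|_{L^p(\R^N)}
\]
provided by \Cref{la:FspeqF0plaps}, and then to replay the duality argument of \Cref{th:dual} on the high-frequency piece alone. The first term on the right already matches the desired inequality, so only the $j \geq 0$ sum needs to be dualized.

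For the dualization I would follow the proof of \Cref{th:dual} almost verbatim, but restricted to $j \geq 0$. Assuming the high-frequency term is positive, use monotone convergence to pick $K \in \N$ so that truncating the sum to $0 \leq j \leq K$ loses at most a factor of $2$. Apply \Cref{pr:dualchar:Lplq} to the finite sequence $(\Delta_j \laps{s}f)_{j=0}^K \in L^p(\ell^q)$ to obtain $(\bar h_j)_{j=0}^K \in L^{p'}(\ell^{q'})$ of norm at most $1$ realising the duality pairing, and approximate each $\bar h_j$ by $h_j \in C_c^\infty(\R^N)$ with total $L^{p'}(\ell^{q'})$-error at most $1/4$. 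Using the self-adjointness of $\Delta_j$ from \eqref{eq:pjsymmetry}, the resulting pairing rewrites, up to an absolute constant, as
\[
\left\|\left(\sum_{j \geq 0} |\Delta_j \laps{s} f|^q\right)^{1/q}\right\|_{L^p(\R^N)} \aleq \left|\int_{\R^N} \laps{s}f(x) \sum_{j=0}^{K} \Delta_j h_j(x)\, dx\right|.
\]

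Now set $h := \sum_{j=0}^{K} \Delta_j h_j$ and $g := \laps{-s} h$. By \eqref{eq:Deltajsupp}, $\mathcal{F}(\Delta_j h_j)$ sits inside the annulus $\{2^{j-1} \leq |\xi| \leq 2^{j+1}\}$, so $\mathcal{F}g$ is supported in $\{|\xi| \geq 1/2\} \subset \{|\xi| \geq 1/4\}$; since the Fourier support of $h$ is bounded away from the origin, $g \in \Sw(\R^N)$ is well defined and the integrand above becomes $\laps{s}f(x)\, \laps{s}g(x)$ by construction. To verify the bound $[g]_{\dot F^s_{p',q'}} \aleq 1$, reduce via \Cref{la:FspeqF0plaps} to $[h]_{\dot F^0_{p',q'}}$, expand $\Delta_\ell h = \sum_{|\ell - j| \leq 1} \Delta_\ell \Delta_j h_j$ via the near-orthogonality \eqref{eq:LPproj}, and invoke \Cref{lem:vv} on the resulting vector-valued expression, using the $L^{p'}(\ell^{q'})$-bound on $(h_j)$. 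A final rescaling of $g$ by an absolute constant produces $[g]_{\dot F^s_{p',q'}} \leq 1$.

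The main obstacle I anticipate is purely bookkeeping: maintaining uniform constants in the regime $s \in [0,\theta)$, $p,q \in (1+1/\theta, \theta)$ required by the last part of the statement. This follows because each ingredient invoked -- \Cref{la:FspeqF0plaps}, \Cref{pr:dualchar:Lplq}, the almost-orthogonality identity \eqref{eq:LPproj}, and \Cref{lem:vv} -- supplies constants depending only on $\theta$ and $N$ in that regime, so no new estimates beyond those used for the homogeneous case of \Cref{th:dual} are needed.
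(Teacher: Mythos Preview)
Your proposal is correct and follows essentially the same route as the paper. The only organizational difference is that the paper introduces $\tilde f := \sum_{k\ge 0}\Delta_k f$, applies the homogeneous duality \Cref{th:dual} to $\tilde f$, and then moves the projector $\sum_{k\ge 0}\Delta_k$ onto the dual side to define $g$; you instead invoke the second inequality of \Cref{la:FspeqF0plaps} and dualize the $j\ge 0$ block directly, which produces the same $g$ (with Fourier support in $\{|\xi|\ge 1/2\}$) without that extra transfer step.
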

\begin{proof}
We have
\[
[f]_{\dot{F}^{s}_{{p,}q}(\R^N)} \leq [f]_{\dot{F}^{0}_{{p,}q}(\R^N)} + [\tilde{f}]_{\dot{F}^{s}_{{p,}q}(\R^N)} \quad \text{where} \quad
\tilde{f} := \sum_{k \geq 0} \Delta_k f.
\]
Following the proof of \Cref{th:dual}, one can find $\tilde{g} \in \Sw(\R^N)$, with $\mathcal{F}\tilde{g}$ supported on $\{|\xi| \geq 1/4\}$, such that $[\tilde{g}]_{\dot{F}^{s}_{p',q'}}  \leq 1$ and 
\[
[\tilde{f}]_{\dot{F}^{s}_{{p,}q}(\R^N)} \aleq \abs{\int_{\R^N} \laps{s}\tilde{f}(x)\ \laps{s} \tilde{g}(x) \ dx} = \abs{\int_{\R^N} \laps{s} f(x)\ \laps{s} \sum_{k \ge 0} \Delta_k \tilde{g}(x) \ dx}.
\]
It remains to check that $g := \sum_{k \geq 0} \Delta_k \tilde{g}$ satisfies the conclusion of the theorem.
\end{proof}

\section{An easy proof for the estimates for \texorpdfstring{$\dot{W}^{s,p}$ when $p=2$}{the estimates for Wsp when p=2}} \label{sect:p=2}
As a curiosity we give now a simple proof of the {equivalence between} ${\dot{W}}^{s,2}$
%-characterization. 
and {$\dot{F}^s_{2,2}$ seminorms.} %The proof of the lower bounds of the estimates in the introduction via duality is based on this observation.
\begin{proposition}\label{pr:peq2easyproof}
Let $s \in (0,1)$ and $f \in \Sw(\R^N)$. Then it holds with constants independent of $s$ and $f$,
\[
 \min\{s,(1-s)\}^{\frac{1}{2}} [f]_{{\dot{W}}^{s,2}(\R^N)} \aeq [f]_{\dot{F}^{s}_{2,2}(\R^N)}.
\]
%The constants depend on $N$, but not $s$ or $f$.
\end{proposition}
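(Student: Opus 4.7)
Since $p=2$, the natural strategy is to run everything through Plancherel's theorem and express both seminorms in terms of $\|\laps{s} f\|_{L^2(\R^N)}$, keeping careful track of the dependence of the constants on $s$. The proof thus reduces to establishing two equivalences with $s$-uniform constants, namely
\[
 \min\{s,1-s\}^{\frac{1}{2}}\, [f]_{\dot W^{s,2}(\R^N)} \aeq \|\laps{s} f\|_{L^2(\R^N)} \aeq [f]_{\dot F^{s}_{2,2}(\R^N)}.
\]

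For the left equivalence I will exploit the singular-integral representation of the fractional Laplacian from \Cref{la:fraclap}, which yields
\[
 \int_{\R^N} (-\Delta)^s f(x)\, f(x)\, dx = c_{N,s} \int_{\R^N}\int_{\R^N} \frac{[2f(x)-f(x+z)-f(x-z)]\, f(x)}{|z|^{N+2s}}\, dz\, dx
\]
with $c_{N,s} \aeq \min\{s,1-s\}$. A short symmetrisation in $z \leftrightarrow -z$, followed by the identity $2u^2-2uv = (u-v)^2 + (u^2-v^2)$ applied to $u=f(x)$, $v=f(x+z)$ and the translation invariance $\int f(x+z)^2\,dx = \int f(x)^2\,dx$, converts the right-hand side into $c_{N,s}\,[f]_{\dot W^{s,2}(\R^N)}^2$. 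Meanwhile Plancherel gives $\int (-\Delta)^s f \cdot f\,dx = \|\laps{s} f\|_{L^2(\R^N)}^2$. Combining these two expressions is precisely the required equivalence, and the only $s$-dependence is the factor $c_{N,s} \aeq \min\{s,1-s\}$.

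For the right equivalence I will use Plancherel on each Littlewood--Paley piece: $\|\Delta_j f\|_{L^2}^2 = \int |\mathcal F\eta(2^{-j}\xi)|^2 |\widehat f(\xi)|^2\,d\xi$, so that
\[
 [f]_{\dot F^{s}_{2,2}(\R^N)}^2 = \int_{\R^N} |\widehat f(\xi)|^2\, m_s(\xi)\, d\xi, \qquad m_s(\xi) := \sum_{j \in \Z} 2^{2sj}|\mathcal F\eta(2^{-j}\xi)|^2.
\]
By the scaling $m_s(\xi) = |\xi|^{2s}\, m_s(\xi/|\xi|)$ it suffices to bound $m_s$ above and below on the unit sphere, uniformly in $s \in (0,1)$. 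Since $\mathcal F\eta$ is supported in $\{1/2 \leq |\xi|\leq 2\}$, only the indices $j \in \{-1,0,1\}$ contribute at $|\omega|=1$; the upper bound is immediate, and the lower bound follows from $\sum_j \mathcal F\eta(2^{-j}\omega) = 1$ and Cauchy--Schwarz (three terms). This yields $m_s(\xi) \aeq |\xi|^{2s}$ with constants independent of $s \in (0,1)$, hence $[f]_{\dot F^{s}_{2,2}(\R^N)} \aeq \|\laps{s} f\|_{L^2(\R^N)}$.

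The only subtlety---and the sole reason the factor $\min\{s,1-s\}^{1/2}$ appears at all---is the constant $c_{N,s}$ in \Cref{la:fraclap}, which vanishes at both endpoints $s\to 0^+$ and $s\to 1^-$; this is the place where the Gagliardo seminorm degenerates relative to the Triebel--Lizorkin seminorm. No further estimates are needed beyond the two Fourier computations above.
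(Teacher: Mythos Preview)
Your proof is correct and follows essentially the same route as the paper's: both use Plancherel to pass from $[f]_{\dot F^s_{2,2}}$ to $\|\laps{s}f\|_{L^2}$, then invoke the integral representation of $(-\Delta)^s$ from \Cref{la:fraclap} together with a symmetrisation/translation argument to reach $c_{N,s}^{1/2}[f]_{\dot W^{s,2}}$; the paper carries the Littlewood--Paley pieces $\Delta_j f$ through the computation and only re-sums at the end, whereas you factor through $\|\laps{s}f\|_{L^2}$ directly, but the substance is the same. One small inaccuracy: $m_s$ only enjoys \emph{dyadic} scaling, so the identity $m_s(\xi)=|\xi|^{2s}m_s(\xi/|\xi|)$ is not literally true; you should instead bound $m_s$ on the annulus $\{1\le|\xi|<2\}$ (where still only finitely many $j$ contribute) and then use $m_s(2^k\xi)=2^{2sk}m_s(\xi)$, which gives the same uniform conclusion $m_s(\xi)\aeq|\xi|^{2s}$.
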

\begin{proof}
%In view of \Cref{co:approxschawrtz} we may assume that $f \in \mathcal{F}^{-1}(C_c^\infty(\R^N))$. This means that all sums in $j$ below are finite and we do not need to worry about moving them in and out of the integral. In view of \Cref{la:FspeqF0plaps},
Let $f \in \Sw(\R^N)$. Then by \chgdt{\Cref{la:FspeqF0plaps}} %{\Cref{la:FspeqF0plaps_new}}
% \Cref{la:LittlewoodPaley} 
and Fubini's theorem,
\[
 [f]_{\dot{F}^{s}_{2,2}(\R^N)}^2 \aeq [\laps{s}f]_{\dot{F}^{0,2}_2(\R^N)}^2 = \sum_{j \in \Z} \int_{\R^N} \laps{s} \Delta_j f(x) \, \overline{\laps{s} \Delta_j f(x)} \ dx.
\]
%{(The constants are independent of $s$ since now $p = 2$.)}
Integrating by parts (via the Fourier transform) we have 
\[
 \int_{\R^N} \laps{s} \Delta_j f(x)\, \overline{ \laps{s} \Delta_j f(x)} \ dx = \int_{\R^N} \laps{2s} \Delta_j f(x) \, \overline{\Delta_j f(x)} \ dx.
\]
With the integral characterization of the fractional Laplacian, \Cref{la:fraclap}, we have %for a constant $|c_{N,s}|\aeq \min\{s,(1-s)\}$,
\begin{equation} \label{eq:L2temp}
 \int_{\R^N} \laps{2s} \Delta_j f(x) \, \overline{ \Delta_j f(x)} \ dx = c_{N,s} \int_{\R^N} \int_{\R^N} \frac{(2\Delta_j f(x)-\Delta_j f(x+z)-\Delta_j f(x-z))\,\overline{ \Delta_j f(x)} }{|z|^{N+2s}}\, dz\, dx.
\end{equation}
We note that by a change of variables $x \mapsto x+z$, we have 
\begin{equation} \label{eq:shift}
\int_{\R^N} ( \Delta_j f(x) - \Delta_j f(x-z) ) \overline{\Delta_j f(x)} dx = \int_{\R^N} (\Delta_j f(x+z) - \Delta_j f(x)) \overline{\Delta_j f(x+z)} dx.
\end{equation}
Hence \eqref{eq:L2temp} is equal to
\[
\begin{split}
%&\int_{\R^N} \int_{\R^N} \frac{(\Delta_j f(x+z)+\Delta_j f(x-z)-2\Delta_j f(x))\, \Delta_j f(x) }{|z|^{N+2s}}\, dz\, dx\\
c_{N,s} \int_{\R^N} \int_{\R^N} \frac{|\Delta_j f(x+z)-\Delta_jf(x)|^2}{|z|^{N+2s}}\, dx\, dz
= c_{N,s} \int_{\R^N} \frac{\|\Delta_j f(\cdot+z)-\Delta_j f(\cdot)\|_{L^2(\R^N)}^2}{ |z|^{N+2s}} dz.
\end{split}
\]
We obtain via \Cref{la:LittlewoodPaley}
\[
 \sum_{j \in \Z} \|\Delta_j f(\cdot+z)-\Delta_j f(\cdot)\|_{L^2(\R^N)}^2 = \|f(\cdot+z)-f(\cdot)\|_{L^2(\R^N)}^2,
\]
from which we deduce
\[
  [f]_{\dot{F}^{s}_{2,2}(\R^N)} \aeq  \brac{c_{N,s} \int_{\R^N} \frac{ \|f(\cdot+z)-f(\cdot)\|_{L^2(\R^N)}^2 }{|z|^{N+2s}} dz}^{\frac{1}{2}}  = c_{N,s}^{\frac{1}{2}} [f]_{{\dot{W}}^{s,2}(\R^N)}.
\]
The proposition then follows from the estimate $c_{N,s} \aeq \min\{s,(1-s)\}$ in Lemma~\ref{la:fraclap}.
\end{proof}

\section{The upper bounds {for \texorpdfstring{$[f]_{\dot{W}^{s,p}}$}{[f]{Wsp}} in} Theorems~\ref{th:mainFspp} and \ref{th:mainFsp2}} \label{sect:upper1.21.5}
In this section we prove the upper bounds {for $[f]_{\dot{W}^{s,p}}$ in} Theorems~\ref{th:mainFspp} and \ref{th:mainFsp2}, namely we show

\begin{theorem}
Let $p \in (1,\infty)$, $s \in (0,1)$ and $f \in \Sw(\R^N)$. Then
%For any $p \in (1,\infty)$ there exists a constant $C=C(N,p)$ such that for any $f \in \Sw(\R^N)$ and for any $s \in (0,1)$,
\begin{equation}\label{eq:upper1}
 [f]_{{\dot{W}}^{s,p}(\R^N)} \aleq  \brac{\frac{1}{s^{\frac{1}{p}}} + \frac{1}{(1-s)^{\frac{1}{p}}}}\, [f]_{\dot{F}^{s}_{p,p}(\R^N)} \quad \text{if $1 < p \leq 2$},
\end{equation}
\begin{equation}\label{eq:upper2}
 [f]_{{\dot{W}}^{s,p}(\R^N)} \aleq \brac{\frac{1}{s^{\frac{1}{2}}} + \frac{1}{(1-s)^{\frac{1}{2}}}}\, [f]_{\dot{F}^{s}_{p,p}(\R^N)} \quad \text{if $2 \leq p < \infty$},
\end{equation}
and
\begin{equation}\label{eq:upper3}
 [f]_{{\dot{W}}^{s,p}(\R^N)}  \aleq \brac{\frac{1}{s^{\frac{1}{p}}} + \frac{1}{(1-s)^{\frac{1}{p}}}}\, [f]_{\dot{F}^{s}_{p,2}(\R^N)} \quad \text{if $2 \leq p < \infty$}.
\end{equation}
\end{theorem}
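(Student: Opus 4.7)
The three inequalities share a common starting point: the representation
\[
[f]_{\dot W^{s,p}(\R^N)}^p = \int_{\R^N} \frac{\|f(\cdot+h) - f\|_{L^p(\R^N)}^p}{|h|^{N+sp}}\,dh,
\]
combined with the Littlewood--Paley decomposition $f = \sum_{j \in \Z} \Delta_j f$, which commutes with translations. For each dyadic piece we use the crude bound $\|\Delta_j f(\cdot+h) - \Delta_j f\|_{L^p} \leq 2\|\Delta_j f\|_{L^p}$ and, via Bernstein's inequality $\|\nabla \Delta_j f\|_{L^p} \aleq 2^j\|\Delta_j f\|_{L^p}$, the smoothness bound $\|\Delta_j f(\cdot+h) - \Delta_j f\|_{L^p} \aleq 2^j|h|\,\|\Delta_j f\|_{L^p}$. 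Applying the smoothness bound on $|h| \leq 2^{-j}$ (yielding a factor $\tfrac{1}{1-s}$ from $\int_0^{2^{-j}} r^{p(1-s)-1}\,dr$) and the crude bound on $|h| > 2^{-j}$ (yielding $\tfrac{1}{s}$ from $\int_{2^{-j}}^\infty r^{-sp-1}\,dr$), the per-dyadic contribution is
\[
\int_{\R^N}\frac{\|\Delta_j f(\cdot+h) - \Delta_j f\|_{L^p}^p}{|h|^{N+sp}}\,dh \aleq \Bigl(\frac{1}{s}+\frac{1}{1-s}\Bigr)\, 2^{jsp}\,\|\Delta_j f\|_{L^p}^p.
\]

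\emph{Proof of \eqref{eq:upper1} ($1 < p \leq 2$).} The Littlewood--Paley inequality of \Cref{la:LittlewoodPaley}, combined with the pointwise bound $(\sum_j a_j^2)^{1/2} \leq (\sum_j a_j^p)^{1/p}$ (valid since $p \leq 2$) and Fubini, gives $\|g\|_{L^p(\R^N)}^p \aleq \sum_j \|\Delta_j g\|_{L^p(\R^N)}^p$. Applying this to $g = f(\cdot+h) - f$, swapping the $j$-sum with the $h$-integration, and invoking the per-dyadic bound produces \eqref{eq:upper1}.

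\emph{Proofs of \eqref{eq:upper3} and \eqref{eq:upper2} ($p \geq 2$).} Here we invoke the Littlewood--Paley square function equivalence $\|g\|_{L^p} \aeq \|(\sum_j|\Delta_j g|^2)^{1/2}\|_{L^p}$. For \eqref{eq:upper3}, set $\widetilde\Delta_j := \Delta_{j-1}+\Delta_j+\Delta_{j+1}$, so that $\Delta_j = \Delta_j\widetilde\Delta_j$ by \eqref{eq:LPproj}. From $\Delta_j f = \widetilde\Delta_j f * \eta_j$ and the Schwartz decay of $\eta$ (mean value theorem on scales $\leq 2^{-j}$, convolution majorant on larger scales) one derives the pointwise estimate
\[
|\Delta_j f(x+h) - \Delta_j f(x)| \aleq \min(2^j|h|,1)\bigl(M\widetilde\Delta_j f(x) + M\widetilde\Delta_j f(x+h)\bigr),
\]
with $M$ the Hardy--Littlewood maximal operator. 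Inserting this into the LP equivalence, using translation-invariance of $L^p$ for the $x+h$-term, and applying the Fefferman--Stein vector-valued maximal inequality in $L^p(\ell^2)$ to absorb $M$, Fubini reduces matters to the pointwise inequality
\begin{equation}\label{plan:pw3}
\int_0^{\infty}\Bigl(\sum_j \min(2^j u,1)^2 b_j^2\Bigr)^{p/2}\frac{du}{u^{1+sp}} \aleq \Bigl(\frac{1}{s}+\frac{1}{1-s}\Bigr)\Bigl(\sum_j 2^{2js} b_j^2\Bigr)^{p/2},
\end{equation}
applied pointwise with $b_j = |\widetilde\Delta_j f(x)|$. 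For \eqref{eq:upper2}, one uses instead Minkowski's inequality in $L^{p/2}$ (valid since $p/2 \geq 1$) to obtain $\|g\|_{L^p}^p \aleq (\sum_j \|\Delta_j g\|_{L^p}^2)^{p/2}$; the reduction is then to the discrete Hardy-type inequality
\begin{equation}\label{plan:pw2}
\int_0^{\infty}\Bigl(\sum_j \min(2^j u,1)^2 a_j^2\Bigr)^{p/2}\frac{du}{u^{1+sp}} \aleq \Bigl(\frac{1}{s^{p/2}}+\frac{1}{(1-s)^{p/2}}\Bigr)\sum_j 2^{jsp} a_j^p,
\end{equation}
with $a_j = \|\Delta_j f\|_{L^p}$. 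Both \eqref{plan:pw3} and \eqref{plan:pw2} are proved by dyadic decomposition of the $u$-integral about $u \sim 2^{-k}$, splitting the inner $j$-sum at $j = k$, and geometric-series summation; H\"older's inequality is used in \eqref{plan:pw2} to exchange the $\ell^2$-structure for the $\ell^p$-structure on the right-hand side.

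\emph{Main obstacle.} The case $p \leq 2$ is clean: the pointwise $\ell^p \hookrightarrow \ell^2$ inclusion reduces matters directly to per-scale integration. The technical heart of the argument lies in the $p \geq 2$ case, where the Littlewood--Paley square function, the pointwise Bernstein-type estimate, and the Fefferman--Stein vector-valued maximal inequality must be orchestrated together, and the sharp $s$- and $(1-s)$-dependencies tracked through the discrete Hardy-type inequalities \eqref{plan:pw3} and \eqref{plan:pw2}.
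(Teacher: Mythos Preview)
Your proof is correct. The overall strategy---dyadic decomposition of the translation integral, Littlewood--Paley decomposition of $f$, and the dichotomy between the crude bound $\|\Delta_j f(\cdot+h)-\Delta_j f\|_{L^p}\le 2\|\Delta_j f\|_{L^p}$ and the Bernstein bound $\|\Delta_j f(\cdot+h)-\Delta_j f\|_{L^p}\aleq 2^j|h|\|\Delta_j f\|_{L^p}$---matches the paper's, and for \eqref{eq:upper1} and \eqref{eq:upper2} your argument is essentially a reorganization of the paper's route through \Cref{la:firststep}.

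The one genuine difference is in \eqref{eq:upper3}. You pass to a \emph{pointwise} estimate
\[
|\Delta_j f(x+h)-\Delta_j f(x)|\aleq \min(2^j|h|,1)\bigl(M\widetilde\Delta_j f(x)+M\widetilde\Delta_j f(x+h)\bigr)
\]
and then invoke the Fefferman--Stein vector-valued maximal inequality to remove the $M$'s. The paper instead works entirely at the level of $L^p(\ell^2)$ norms: in \Cref{la:firststep} the smoothness gain for $j<0$ comes from the fundamental theorem of calculus together with a vector-valued multiplier estimate (replacing $\nabla$ by $2^{k+j}$), and the subsequent proposition for \eqref{eq:upper3} uses only the elementary pointwise domination $\sum_{j\ge 0}|2^{ks}\Delta_{k+j}f(x)|^2\le \sum_{\ell}|2^{\ell s}\Delta_\ell f(x)|^2$ combined with $p\ge 2$. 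So the paper avoids Fefferman--Stein entirely. Your approach is a bit heavier in machinery but has the advantage of making the reduction to the scalar inequality \eqref{plan:pw3} completely transparent; the paper's is more self-contained. Both track the $s^{-1/p}+(1-s)^{-1/p}$ dependence identically via geometric series.
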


%Observe that in view of \Cref{co:approxschawrtz} it suffices to assume $f \in \mathcal{F}^{-1}(C_c^\infty(\R^N \backslash \{0\}))$, which is the standing assumption throughout this section. Then 
\eqref{eq:upper1} and \eqref{eq:upper2} have been proven by Gu and the third author in \cite{GY21}, and \eqref{eq:upper3} is a slight adaptation of their argument. We still present it for the sake of completeness.

Below we repeatedly use the following estimate for geometric sums: for $1 < p < \infty$,
\begin{equation} \label{eq:geoms>0}
 \sum_{j \geq 0} 2^{-jsp} =  \frac{1}{1-2^{-sp}} \aeq \frac{1}{s} \quad \text{for $s > 0$}
\end{equation}
and
\begin{equation} \label{eq:geoms<1}
 \sum_{j \leq 0} 2^{j \sigma p} =  \frac{1}{1-2^{-\sigma p}} \aeq \frac{1}{\sigma} \quad \text{for $\sigma > 0$}.
\end{equation}

The first step for \eqref{eq:upper1}, \eqref{eq:upper2} and \eqref{eq:upper3} is the following estimate.
\begin{lemma}\label{la:firststep}
Let $p \in (1,\infty)$ and $s \in (0,1)$. Then
\[
\begin{split}
 [f]_{{\dot{W}}^{s,p}(\R^N)} \aleq& \brac{\sum_{k \in \Z} \int_{\R^N}\brac{\sum_{j \geq 0} |2^{ks} \Delta_{k+j} f(x)|^2}^{\frac{p}{2}} dx}^{\frac{1}{p}} +\brac{\sum_{k \in \Z} \int_{\R^N}\brac{\sum_{j \leq 0} \abs{2^{j} 2^{ks} \Delta_{k+j} f(x)}^2}^{\frac{p}{2}} dx}^{\frac{1}{p}}.
 \end{split}
\]
%The constant depends on $N$ and $p$, but not on $s$.
\end{lemma}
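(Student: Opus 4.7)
My plan is to decompose the Gagliardo seminorm dyadically in the difference variable $z := y-x$, and at each dyadic scale $2^{-k}$ of $z$, to split $f = P_{\geq k}f + P_{<k}f$ via the Littlewood--Paley projections $P_{\geq k}f := \sum_{l \geq k}\Delta_l f$ and $P_{<k}f := \sum_{l<k}\Delta_l f$, treating the resulting ``high-frequency'' and ``low-frequency'' differences separately. Setting $A_k := \{z \in \R^N : 2^{-k-1} \leq |z| < 2^{-k}\}$, one has $|z|^{-(N+sp)} \aeq 2^{k(N+sp)}$ on $A_k$ and $|A_k| \aeq 2^{-kN}$, so the substitution $z = y-x$ followed by the $L^p(dx)$-triangle inequality yields
\[
[f]_{\dot{W}^{s,p}(\R^N)}^p \aleq \sum_{k \in \Z} 2^{k(N+sp)} \int_{A_k} \Bigl(\|P_{\geq k}f(\cdot+z) - P_{\geq k}f(\cdot)\|_{L^p(\R^N)}^p + \|P_{<k}f(\cdot+z)-P_{<k}f(\cdot)\|_{L^p(\R^N)}^p\Bigr) dz.
\]

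For the high-frequency term, I would simply use translation invariance of $L^p(dx)$, namely $\|P_{\geq k}f(\cdot+z) - P_{\geq k}f(\cdot)\|_{L^p} \leq 2\|P_{\geq k}f\|_{L^p}$, combined with the Littlewood--Paley square function characterization of $L^p$ (\Cref{la:LittlewoodPaley} via the vector-valued estimate \Cref{lem:vv}) to get $\|P_{\geq k}f\|_{L^p} \aeq \|(\sum_{l \geq k}|\Delta_l f|^2)^{1/2}\|_{L^p}$. Since $|A_k|\,2^{k(N+sp)} \aeq 2^{ksp}$, summing over $k$ and reindexing $j = l-k \geq 0$ produces exactly the first term in the lemma.

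For the low-frequency term, the key is to gain a factor of $|z|$ using the smoothness of $P_{<k}f$. I would use the fundamental theorem of calculus in the form $P_{<k}f(x+z) - P_{<k}f(x) = \int_0^1 z\cdot \nabla P_{<k}f(x+tz)\,dt$, combined with Minkowski's integral inequality and translation invariance of the $L^p(dx)$-norm, to obtain $\|P_{<k}f(\cdot+z) - P_{<k}f(\cdot)\|_{L^p} \leq |z|\,\|\nabla P_{<k}f\|_{L^p}$. The main technical step, and the place where care is needed, is the weighted Littlewood--Paley estimate
\[
\|\nabla P_{<k}f\|_{L^p(\R^N)} \aleq \Bigl\|\bigl(\sum_{l \leq k} 2^{2l}|\Delta_l f|^2\bigr)^{1/2}\Bigr\|_{L^p(\R^N)},
\]
which I will derive from \Cref{lem:vv} using the facts that $\Delta_{l'}\nabla$ acts as $2^{l'}$ times an auxiliary Littlewood--Paley operator on its Fourier support and that $\nabla P_{<k}f$ is Fourier-localized to $|\xi|\aleq 2^k$ (so only indices $l' \aleq k$ contribute). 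Using $|z|\aeq 2^{-k}$ on $A_k$, the factor $|z|^p\aeq 2^{-kp}$ redistributes into the weight $2^{2l}$ to produce $2^{2(l-k)} = 2^{2j}$ after reindexing $j = l-k\leq 0$, which is precisely the second term in the lemma. All constants are independent of $s$ and $k$, as required.
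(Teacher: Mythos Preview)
Your proposal is correct and follows essentially the same route as the paper: dyadic decomposition in the difference variable, a high/low frequency split at scale $2^{-k}$, triangle inequality plus translation invariance for the high part, and the fundamental theorem of calculus plus Minkowski for the low part. The only cosmetic difference is the order of operations---the paper first applies Littlewood--Paley to the difference $f(\cdot+z)-f(\cdot)$ and then splits the resulting square-function sum into $j\geq 0$ and $j<0$, whereas you split $f=P_{\geq k}f+P_{<k}f$ first and then invoke Littlewood--Paley on each piece; both rely on the same vector-valued estimate (\Cref{lem:vv}) and lead to the same bound.
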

\begin{proof}
We have 
\[
\begin{split}
 [f]_{{\dot{W}}^{s,p}(\R^N)}
=& \brac{\int_{\R^N} \frac{ \|f(\cdot+z)-f(\cdot)\|_{L^p(\R^N)}^p }{|z|^{N+sp}} dz }^{\frac{1}{p}}
 \aleq \brac{\sum_{k \in \Z} 2^{ksp} \sup_{|z| \aeq 2^{-k}} \|f(\cdot+z)-f(\cdot)\|_{L^p(\R^N)}^p}^{\frac{1}{p}}.
  \end{split}
\]
But for $|z| \aeq 2^{-k}$, Littlewood-Paley implies
\[
\begin{split}
 \|f(\cdot+z)-f(\cdot)\|_{L^p(\R^N)}
 \aleq& \brac{ \int_{\R^N}\brac{\sum_{j \in \Z} |\Delta_{k+j} f(x+z)-\Delta_{k+j} f(x)|^2}^{\frac{p}{2}} dx }^{\frac{1}{p}} 
 \end{split}
\]
which by the triangle inequality is 
\[
\begin{split}
 \aleq& \brac{ \int_{\R^N}\brac{\sum_{j \ge 0} |\Delta_{k+j} f(x+z)-\Delta_{k+j} f(x)|^2}^{\frac{p}{2}} dx }^{\frac{1}{p}} + \brac{ \int_{\R^N}\brac{\sum_{j < 0} |\Delta_{k+j} f(x+z)-\Delta_{k+j} f(x)|^2}^{\frac{p}{2}} dx }^{\frac{1}{p}}.
\end{split}
\]
The first term above is bounded by the triangle inequality by
\[
\begin{split}
%&\brac{ \int_{\R^N}\brac{\sum_{j \ge 0} |\Delta_{k+j} f(x+z)|^2}^{\frac{p}{2}} dx }^{\frac{1}{p}}
%+ \brac{ \int_{\R^N}\brac{\sum_{j \ge 0} |\Delta_{k+j} f(x)|^2}^{\frac{p}{2}} dx }^{\frac{1}{p}} 
2 \brac{ \int_{\R^N}\brac{\sum_{j \ge 0} |\Delta_{k+j} f(x)|^2}^{\frac{p}{2}} dx }^{\frac{1}{p}}.
\end{split}
\]
For the second term, the fundamental theorem of calculus implies
\[
\begin{split}
|\Delta_{k+j} f(x+z) - \Delta_{k+j} f(x)| 
\aleq |z| \int_0^1 |\nabla \Delta_{k+j} f(x+tz)| dt,
\end{split}
\]
so Minkowski's inequality implies
\[
\begin{split}
\brac{ \int_{\R^N}\brac{\sum_{j < 0} |\Delta_{k+j} f(x+z)-\Delta_{k+j} f(x)|^2}^{\frac{p}{2}} dx }^{\frac{1}{p}}
\aleq & |z|\int_0^1 \brac{ \int_{\R^N}\brac{\sum_{j < 0} |\nabla \Delta_{k+j} f(x+tz)|^2}^{\frac{p}{2}} dx }^{\frac{1}{p}} dt
\end{split}  
\]
which is
\[
\begin{split}
\aeq & 2^{-k} \brac{ \int_{\R^N}\brac{\sum_{j < 0} |\nabla \Delta_{k+j} f(x)|^2}^{\frac{p}{2}} dx }^{\frac{1}{p}} \aleq \brac{ \int_{\R^N}\brac{\sum_{j < 0} |2^{j} \Delta_{k+j} f(x)|^2}^{\frac{p}{2}} dx }^{\frac{1}{p}}
\end{split}  
\]
by the Littlewood-Paley inequality again. Altogether, we get
\[
2^{ksp} \sup_{|z| \aeq 2^{-k}} \|f(\cdot+z)-f(\cdot)\|_{L^p(\R^N)}^p \aleq \int_{\R^N}\brac{\sum_{j \ge 0} |2^{ks} \Delta_{k+j} f(x)|^2}^{\frac{p}{2}} dx +  \int_{\R^N}\brac{\sum_{j < 0} |2^{j} 2^{ks} \Delta_{k+j} f(x)|^2}^{\frac{p}{2}} dx
\]
which implies the desired estimate.
\end{proof}

Now \eqref{eq:upper1} is a consequence of \Cref{la:firststep} and the following proposition.
\begin{proposition}
Let $1 < p \leq 2$. Then
\[
 \brac{\sum_{k \in \Z} \int_{\R^N}\brac{\sum_{j \geq 0} |2^{ks} \Delta_{k+j} f(x)|^2}^{\frac{p}{2}} dx}^{\frac{1}{p}} \aleq \frac{1}{s^{\frac{1}{p}}} [f]_{\dot{F}^{s}_{p,p}(\R^N)}.
 \]
and 
\[
  \brac{\sum_{k \in \Z} \int_{\R^N}\brac{\sum_{j < 0} \abs{2^j 2^{ks} \Delta_{k+j} f(x)}^2}^{\frac{p}{2}} dx}^{\frac{1}{p}} \aleq \frac{1}{(1-s)^{\frac{1}{p}}} [f]_{\dot{F}^{s}_{p,p}(\R^N)}.
\]
\end{proposition}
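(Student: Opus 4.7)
The plan is to exploit the hypothesis $1 < p \leq 2$ via the elementary embedding $\ell^p \hookrightarrow \ell^2$, which in this regime gives $\left(\sum_j |a_j|^2\right)^{p/2} \leq \sum_j |a_j|^p$ (because $p/2 \leq 1$ allows the $p/2$-power to be brought inside a sum of non-negative terms). This immediately removes the mixed-norm structure and replaces it by a single $p$-th power sum, which after Fubini decouples into a geometric sum in $j$ and the $\dot{F}^s_{p,p}$ seminorm in $k$.

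For the first inequality I would argue as follows. Apply the embedding pointwise in $x$:
\[
\int_{\R^N}\brac{\sum_{j \geq 0} |2^{ks} \Delta_{k+j} f(x)|^2}^{\frac{p}{2}} dx \;\leq\; \sum_{j \geq 0} 2^{ksp} \int_{\R^N} |\Delta_{k+j} f(x)|^p\, dx.
\]
Summing over $k$ and exchanging the order of summation via Fubini, then substituting $\ell = k+j$ for fixed $j \geq 0$, yields
\[
\sum_{k \in \Z} \sum_{j \geq 0} 2^{ksp} \|\Delta_{k+j} f\|_{L^p}^p \;=\; \sum_{j \geq 0} 2^{-jsp} \sum_{\ell \in \Z} 2^{\ell s p} \|\Delta_\ell f\|_{L^p}^p \;=\; \brac{\sum_{j \geq 0} 2^{-jsp}}\, [f]_{\dot{F}^s_{p,p}(\R^N)}^p.
\]
By the geometric sum estimate \eqref{eq:geoms>0}, the bracketed quantity is $\aeq 1/s$. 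Taking $p$-th roots yields the first claim.

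For the second inequality the argument is identical, except the weight $2^j$ produces an extra factor $2^{jp}$ before the change of variables. After the same manipulations one obtains
\[
\sum_{k \in \Z}\sum_{j < 0} 2^{jp} 2^{ksp} \|\Delta_{k+j}f\|_{L^p}^p \;=\; \brac{\sum_{j < 0} 2^{jp(1-s)}}\, [f]_{\dot{F}^s_{p,p}(\R^N)}^p,
\]
and the geometric sum estimate \eqref{eq:geoms<1} with $\sigma = 1-s$ gives that the prefactor is $\aeq 1/(1-s)$. Taking $p$-th roots finishes the proof.

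There is essentially no obstacle: the nontrivial work was absorbed into \Cref{la:firststep}. The only point worth flagging is that the $\ell^p \hookrightarrow \ell^2$ embedding is the only place the assumption $p \leq 2$ is used, and that it simultaneously produces both the correct $\dot{F}^s_{p,p}$ seminorm on the right-hand side (by turning the $\ell^2$ sum into an $\ell^p$ sum that matches the definition of $\dot{F}^s_{p,p}$) and cleanly decouples the $j$- and $k$-sums. For the opposite regime $p \geq 2$ (inequality \eqref{eq:upper2}) the same scheme fails at exactly this step, explaining why one then loses the exponent $p$ in favor of $2$ and needs the vector-valued Littlewood--Paley inequality of \Cref{lem:vv}.
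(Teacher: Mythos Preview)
Your proof is correct and follows essentially the same route as the paper: use $p/2 \leq 1$ to pass from the $\ell^2$ inner sum to an $\ell^p$ sum, then swap the $j$ and $k$ sums and evaluate the resulting geometric series via \eqref{eq:geoms>0} and \eqref{eq:geoms<1}. The only minor inaccuracy is in your closing remark: for the $p\geq 2$ case of \eqref{eq:upper2} the paper uses Minkowski's inequality in $\ell^{p/2}(L^{p/2})$, not \Cref{lem:vv}.
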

\begin{proof}
Since $p \in (1,2]$, we have $\abs{\sum_j F_j}^{\frac{p}{2}} \leq \sum_j |F_j|^{\frac{p}{2}}$. Thus
\[
 \brac{\sum_{k \in \Z} \int_{\R^N}\brac{\sum_{j \geq 0} |2^{ks} \Delta_{k+j} f(x)|^2}^{\frac{p}{2}} dx}^{\frac{1}{p}} \leq  \brac{\sum_{k \in \Z} \int_{\R^N} \sum_{j \geq 0} |2^{ks} \Delta_{k+j} f(x)|^p dx}^{\frac{1}{p}}, 
\] 
and we conclude by noting that for $s > 0$, \eqref{eq:geoms>0} gives
\[
\sum_{k \in \Z} \sum_{j \geq 0} |2^{ks} \Delta_{k+j} f(x)|^p = \sum_{j \geq 0} 2^{-jsp} \sum_{k \in \Z} |2^{(k+j)s} \Delta_{k+j} f(x)|^p \aeq \frac{1}{s} \sum_{k \in \Z} | 2^{ks} \Delta_k f(x)|^p;
\]
similarly 
\[
\brac{\sum_{k \in \Z} \int_{\R^N}\brac{\sum_{j < 0} \abs{2^j 2^{ks} \Delta_{k+j} f(x)}^2}^{\frac{p}{2}} dx}^{\frac{1}{p}} \leq \brac{\sum_{k \in \Z} \int_{\R^N} \sum_{j < 0} |2^j 2^{ks} \Delta_{k+j} f(x)|^p dx}^{\frac{1}{p}},
\]
and we conclude by noting that for $s < 1$, \eqref{eq:geoms<1} with $\sigma = 1-s$ gives
\[
\sum_{k \in \Z} \sum_{j < 0} |2^j 2^{ks} \Delta_{k+j} f(x)|^p = \sum_{j < 0} 2^{j(1-s)p} \sum_{k \in \Z} |2^{(k+j)s} \Delta_{k+j} f(x)|^p \aeq \frac{1}{1-s} \sum_{k \in \Z} | 2^{ks} \Delta_k f(x)|^p.
\]
\end{proof}

Next, \eqref{eq:upper2} is a consequence \Cref{la:firststep} and the following
\begin{proposition}
Let $2 \leq p < \infty$. Then
\[
 \brac{\sum_{k \in \Z} \int_{\R^N}\brac{\sum_{j \geq 0} |2^{ks} \Delta_{k+j} f(x)|^2}^{\frac{p}{2}} dx}^{\frac{1}{p}} \aleq \frac{1}{s^{\frac{1}{2}}}\, [f]_{\dot{F}^{s}_{p,p}(\R^N)} 
 \]
 and 
 \[
 \brac{\sum_{k \in \Z} \int_{\R^N}\brac{\sum_{j < 0} \abs{2^{j} 2^{ks} \Delta_{k+j} f(x)}^2}^{\frac{p}{2}} dx}^{\frac{1}{p}} \aleq \frac{1}{(1-s)^{\frac{1}{2}}}\, [f]_{\dot{F}^{s}_{p,p}(\R^N)}.
 \]
\end{proposition}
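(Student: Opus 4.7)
The plan is to prove both estimates by exploiting that $p/2 \geq 1$ when $p \geq 2$, applying Hölder's inequality in the index $j$ with conjugate exponents $p/2$ and $(p/2)' = p/(p-2)$ (the case $p=2$ being trivial, as both estimates then reduce to an equality with constant $\aeq 1/\sqrt{s}$ or $\aeq 1/\sqrt{1-s}$). The key idea is that the geometric series $\sum_{j\geq 0} 2^{-2js}\aeq 1/s$ and $\sum_{j\geq 0} 2^{-2j(1-s)}\aeq 1/(1-s)$ will each be used twice, once inside the Hölder inequality and once after summing over $k$, and the square root in the final constant appears because we raise to the power $1/p$ at the end.

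For the first estimate, I would introduce the shorthand $a_k(x) := 2^{ks}\Delta_k f(x)$ so that $[f]_{\dot{F}^s_{p,p}}^p = \sum_k \int_{\R^N}|a_k(x)|^p\,dx$, and rewrite
\[
 \sum_{j\geq 0}|2^{ks}\Delta_{k+j}f(x)|^2=\sum_{j\geq 0}2^{-2js}|a_{k+j}(x)|^2.
\]
Applying Hölder in $j$ with exponents $p/2$ and $p/(p-2)$ to the product $2^{-2js}\cdot\bigl(2^{-2js}|a_{k+j}|^2\bigr)^0\cdot\ldots$ (splitting $2^{-2js}|a_{k+j}|^2$ as $2^{-2js\cdot (p-2)/p}\cdot 2^{-2js\cdot 2/p}|a_{k+j}|^2$) yields
\[
 \sum_{j\geq 0}2^{-2js}|a_{k+j}(x)|^2 \aleq \Bigl(\sum_{j\geq 0}2^{-2js}\Bigr)^{(p-2)/p}\Bigl(\sum_{j\geq 0}2^{-2js}|a_{k+j}(x)|^p\Bigr)^{2/p}.
\]
Raising this to the power $p/2$, the first factor gives $s^{-(p-2)/2}$ by \eqref{eq:geoms>0}, and we obtain
\[
 \Bigl(\sum_{j\geq 0}2^{-2js}|a_{k+j}(x)|^2\Bigr)^{p/2}\aleq s^{-(p-2)/2}\sum_{j\geq 0}2^{-2js}|a_{k+j}(x)|^p.
\]

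Next I would integrate over $x\in\R^N$, sum over $k\in\Z$, swap the order of summation, and perform the substitution $k\mapsto k-j$ inside the resulting inner sum over $k$ (which is translation invariant). The outer sum becomes $\sum_{j\geq 0}2^{-2js}\aeq 1/s$ by \eqref{eq:geoms>0} again, producing a total factor of $s^{-(p-2)/2}\cdot s^{-1}=s^{-p/2}$ multiplied by $[f]_{\dot{F}^s_{p,p}}^p$. Taking the $p$-th root then yields the claimed bound with constant $s^{-1/2}$.

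The second estimate is proved by the same mechanism, with the factor $2^j$ (for $j<0$) combining with $2^{-2js}$ to give $2^{2j(1-s)}$; after the substitution $j\mapsto -j$ one reduces to exactly the previous computation with $s$ replaced by $\sigma:=1-s$, and \eqref{eq:geoms<1} plays the role of \eqref{eq:geoms>0}. I do not anticipate a serious obstacle here, since for $p\ge 2$ the Hölder step is sharp and the arithmetic of the exponents of $s$ (or $1-s$) works out cleanly to give the square root; the only point to be careful about is that the Hölder conjugate exponent $p/(p-2)$ degenerates as $p\downarrow 2$, which is harmless because the case $p=2$ can be handled directly (and in fact coincides with \Cref{pr:peq2easyproof}).
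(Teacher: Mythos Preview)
Your argument is correct. The Hölder splitting $2^{-2js}|a_{k+j}|^2 = 2^{-2js(p-2)/p}\cdot 2^{-4js/p}|a_{k+j}|^2$ with exponents $p/(p-2)$ and $p/2$ works exactly as you describe, and the bookkeeping of exponents of $s$ (respectively $1-s$) is accurate: one factor $s^{-(p-2)/2}$ from the Hölder weight and another $s^{-1}$ from summing the geometric series in $j$ after integrating in $x$ and summing in $k$, giving $s^{-p/2}$ before the $p$-th root.

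The paper's proof takes a slightly different route: instead of applying Hölder in $j$ pointwise in $(k,x)$, it applies Minkowski's inequality in the mixed space $\ell^{p/2}_k L^{p/2}_x$ (valid since $p/2\geq 1$) to pull the $j$-sum outside, obtaining
\[
\brac{\sum_{k}\int\brac{\sum_{j\geq 0}|2^{ks}\Delta_{k+j}f|^2}^{p/2}}^{1/p}\leq\brac{\sum_{j\geq 0}\brac{\sum_k\int|2^{ks}\Delta_{k+j}f|^p}^{2/p}}^{1/2}=\brac{\sum_{j\geq 0}2^{-2js}}^{1/2}[f]_{\dot F^s_{p,p}}.
\]
This is arguably a hair more direct, since the geometric series is summed only once and no weight splitting is needed. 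Your approach, on the other hand, stays entirely at the level of pointwise estimates plus Fubini, which some readers may find more elementary. The two arguments are of course closely related (Minkowski being a form of the triangle inequality, which for $\ell^{p/2}$ is dual to the Hölder step you use), and both yield the same constant with the same dependence on $s$.
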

\begin{proof}
Since $p \geq 2$ we can apply Minkowski inequality for $\ell^{\frac{p}{2}} {(} L^{\frac{p}{2}}(\R^N) {)}$ and get
\[
 \brac{\sum_{k \in \Z} \int_{\R^N}\brac{\sum_{j \geq 0} |2^{ks} \Delta_{k+j} f(x)|^2}^{\frac{p}{2}} dx}^{\frac{1}{p}} \leq 
 \brac{ \sum_{j \geq 0} \brac{\sum_{k \in \Z} \int_{\R^N} |2^{ks} \Delta_{k+j} f(x)|^p dx}^{\frac{2}{p}} }^{\frac{1}{2}}
\]
which for $s > 0$ is equal to 
\[
 \brac{ \sum_{j \geq 0} 2^{-2js} \brac{\sum_{k \in \Z} \int_{\R^N} |2^{(k+j)s} \Delta_{k+j} f(x)|^p dx}^{\frac{2}{p}} }^{\frac{1}{2}} = \brac{ \sum_{j \geq 0} 2^{-2js} [f]_{\dot{F}^s_{p,p}}^2 }^{\frac{1}{2}} \aeq \frac{1}{s^{\frac{1}{2}}} [f]_{\dot{F}^s_{p,p}}
\]
using \eqref{eq:geoms>0} with $p = 2$. Similarly,
\[
 \brac{\sum_{k \in \Z} \int_{\R^N}\brac{\sum_{j < 0} \abs{2^{j} 2^{ks} \Delta_{k+j} f(x)}^2}^{\frac{p}{2}} dx}^{\frac{1}{p}} \leq 
 \brac{ \sum_{j < 0} \brac{ \sum_{k \in \Z} \int_{\R^N} \abs{2^{j} 2^{ks} \Delta_{k+j} f(x)}^p dx}^{\frac{2}{p}} }^{\frac{1}{2}}
\]
which for $s < 1$ is equal to
\[
 \brac{ \sum_{j < 0} 2^{2j(1-s)} \brac{\sum_{k \in \Z} \int_{\R^N} |2^{(k+j)s} \Delta_{k+j} f(x)|^p dx}^{\frac{2}{p}} }^{\frac{1}{2}} = \brac{ \sum_{j < 0} 2^{2j(1-s)} [f]_{\dot{F}^s_{p,p}}^2 }^{\frac{1}{2}} \aeq \frac{1}{(1-s)^{\frac{1}{2}}} [f]_{\dot{F}^s_{p,p}}
\]
using \eqref{eq:geoms<1} with $\sigma = 1-s$ and $p = 2$. 
\end{proof}

Lastly, \eqref{eq:upper3} is a consequence of \Cref{la:firststep} and the following proposition.
\begin{proposition}
Let $2 \leq p < \infty$. Then
\[
 \brac{\sum_{k \in \Z} \int_{\R^N}\brac{\sum_{j \geq 0} |2^{ks} \Delta_{k+j} f(x)|^2}^{\frac{p}{2}} dx}^{\frac{1}{p}}  \aleq \frac{1}{s^{\frac{1}{p}}}\, [f]_{\dot{F}^{s}_{p,2}}
 \]
 and 
 \[
 \brac{\sum_{k \in \Z} \int_{\R^N}\brac{\sum_{j < 0} \abs{2^{j} 2^{ks} \Delta_{k+j} f(x)}^2}^{\frac{p}{2}} dx}^{\frac{1}{p}} \aleq \frac{1}{(1-s)^{\frac{1}{p}}}\, [f]_{\dot{F}^{s}_{p,2}}.
 \]
\end{proposition}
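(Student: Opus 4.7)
The plan is to prove both estimates by the same reindexing trick, relying on the crucial fact that $p/2 \ge 1$ so that Minkowski's inequality is available in $\ell^{p/2}$. Set $a_m(x) := 2^{ms} |\Delta_m f(x)|$, so that $[f]_{\dot{F}^{s}_{p,2}} = \bigl\| \bigl(\sum_m a_m^2\bigr)^{1/2} \bigr\|_{L^p(\R^N)}$. After substituting $m = k+j$, the factor $2^{ks} \Delta_{k+j} f$ becomes $2^{-(m-k)s} \cdot 2^{ms} \Delta_m f$, and the constraint $j \ge 0$ becomes $m \ge k$. Hence
\[
\sum_{j \ge 0} |2^{ks} \Delta_{k+j} f(x)|^2 = \sum_{m \ge k} 2^{-2(m-k)s} a_m(x)^2.
\]

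Next, I would raise this to the $p/2$-th power and sum in $k$. Since $p/2 \ge 1$, Minkowski's inequality applied to the $\ell^{p/2}_k$-norm of a sum in $m$ gives, for each fixed $x$,
\[
\left( \sum_{k \in \Z} \Bigl(\sum_{m \ge k} 2^{-2(m-k)s} a_m(x)^2 \Bigr)^{p/2} \right)^{2/p} \le \sum_{m \in \Z} a_m(x)^2 \left( \sum_{k \le m} 2^{-(m-k)sp} \right)^{2/p}.
\]
The inner geometric sum equals $\sum_{\ell \ge 0} 2^{-\ell sp} \aleq s^{-1}$ by \eqref{eq:geoms>0}, so the right-hand side is bounded by $C\, s^{-2/p} \sum_m a_m(x)^2$. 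Raising to the $p/2$-th power and integrating in $x$ yields exactly $C\, s^{-1} [f]_{\dot{F}^{s}_{p,2}}^p$, giving the first inequality after taking the $p$-th root.

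For the second inequality the argument is mechanically identical. With $m = k+j$ the constraint $j < 0$ becomes $m < k$ and the weight becomes $|2^j 2^{ks} \Delta_{k+j} f|^2 = 2^{-2(k-m)(1-s)} a_m^2$, since $2^{j} 2^{ks} = 2^{m-k}\cdot 2^{ks} = 2^{ms}\cdot 2^{-(k-m)(1-s)}$. Repeating the Minkowski step in $\ell^{p/2}_k$ produces
\[
\sum_{k > m} 2^{-(k-m)(1-s)p} = \sum_{\ell \ge 1} 2^{-\ell(1-s)p} \aleq (1-s)^{-1}
\]
by \eqref{eq:geoms<1} with $\sigma = 1-s$, and the same bookkeeping yields $B \aleq (1-s)^{-1/p} [f]_{\dot{F}^{s}_{p,2}}$.

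The only place where $p \ge 2$ enters is the application of the triangle inequality in $\ell^{p/2}$; this is what makes the $\ell^2$ norm inside $\dot{F}^s_{p,2}$ compatible with pulling the sum over $k$ inside. This is arguably the key (and only) nontrivial step; the rest is bookkeeping with geometric series. No other subtleties are expected.
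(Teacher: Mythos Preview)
Your argument is correct. Both inequalities follow exactly as you describe, and the only place $p\ge 2$ is used is indeed the triangle inequality in $\ell^{p/2}_k$.

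Your route differs from the paper's. Instead of Minkowski in $\ell^{p/2}_k$, the paper first observes the pointwise domination
\[
\sum_{j\ge 0}|2^{ks}\Delta_{k+j}f(x)|^2 \le S(x):=\sum_{\ell\in\Z}|2^{\ell s}\Delta_\ell f(x)|^2,
\]
and then uses $A^{p/2}\le A\cdot S^{p/2-1}$ (valid for $0\le A\le S$, $p\ge 2$) to peel off one factor before summing in $k$; the resulting geometric series is $\sum_{j\ge0}2^{-2js}\aeq s^{-1}$. In your approach the geometric series is $\sum_{\ell\ge0}2^{-\ell sp}\aeq s^{-1}$ instead, but the outcome is the same. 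Both arguments are elementary and of comparable length; your Minkowski version is perhaps a bit more systematic (it treats the $k$-sum as a norm from the outset and does not require identifying a dominating quantity $S(x)$), while the paper's pointwise trick avoids the reindexing $m=k+j$ and keeps the $j$-sum explicit throughout.
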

\begin{proof}
Fix $x \in \R^N$. We have for any $k \in \Z$
\[
 \sum_{j \geq 0} |2^{ks} \Delta_{k+j} f(x)|^2 \leq \sum_{j \geq 0} |2^{(k+j)s} \Delta_{k+j} f(x)|^2 \leq \sum_{\ell \in \Z} |2^{\ell s} \Delta_\ell f(x)|^2.
\]
Consequently, since $p \geq 2$ we have 
\[
\sum_{k \in \Z} \brac{\sum_{j \geq 0} |2^{ks} \Delta_{k+j} f(x)|^2}^{\frac{p}{2}}
\leq \sum_{k \in \Z} \sum_{j \geq 0} |2^{ks} \Delta_{k+j} f(x)|^2 \brac{\sum_{\ell \in \Z} |2^{\ell s} \Delta_\ell f(x)|^2}^{\frac{p}{2}-1}
\]
which is
\[
=\sum_{j \geq 0} 2^{-2js} \sum_{k \in \Z} |2^{(k+j)s} \Delta_{k+j} f(x)|^2 \brac{\sum_{\ell \in \Z} |2^{\ell s} \Delta_\ell f(x)|^2}^{\frac{p}{2}-1} 
\aeq \frac{1}{s} \brac{\sum_{\ell \in \Z} |2^{\ell s} \Delta_\ell f(x)|^2}^{\frac{p}{2}} 
\]
using \eqref{eq:geoms>0} with $p = 2$. Integrating this with respect to $x$ gives the first inequality. Similarly, for any $k \in \Z$
\[
 \sum_{j < 0} |2^{j} 2^{ks} \Delta_{k+j} f(x)|^2 \leq \sum_{j < 0} |2^{(k+j)s} \Delta_{k+j} f(x)|^2 \leq \sum_{\ell \in \Z} |2^{\ell s} \Delta_\ell f(x)|^2.
\]
Consequently, since $p \geq 2$ we have 
\[
\sum_{k \in \Z} \brac{\sum_{j < 0} |2^{ks} \Delta_{k+j} f(x)|^2}^{\frac{p}{2}}
\leq \sum_{k \in \Z} \sum_{j < 0} |2^{ks} \Delta_{k+j} f(x)|^2 \brac{\sum_{\ell \in \Z} |2^{\ell s} \Delta_\ell f(x)|^2}^{\frac{p}{2}-1}
\]
which is
\[
=\sum_{j < 0} 2^{2j(1-s)} \sum_{k \in \Z} |2^{(k+j)s} \Delta_{k+j} f(x)|^2 \brac{\sum_{\ell \in \Z} |2^{\ell s} \Delta_\ell f(x)|^2}^{\frac{p}{2}-1} 
\aeq \frac{1}{1-s} \brac{\sum_{\ell \in \Z} |2^{\ell s} \Delta_\ell f(x)|^2}^{\frac{p}{2}} 
\]
using \eqref{eq:geoms<1} with $\sigma = 1-s$ and $p = 2$. Integrating this with respect to $x$ gives the second inequality. 
\end{proof}

\section{The upper bound {for \texorpdfstring{$[f]_{\dot{W}^{s,p}}$}{[f]{Wsp}} in} of Theorem~\ref{th:sobolev}}\label{s:sobolevbound}
In this section we prove the first part of \Cref{th:sobolev}{, which provides an upper bound for $[f]_{\dot{W}^{s,p}}$ in terms of $[f]_{\dot{F}^r_{p,2}}$ and $[f]_{\dot{F}^t_{p,2}}$ when $0 \leq r < s < t \leq 1$ and $1 < p < \infty$}. Namely, we show that for any {such $r,s,t,p$} and $f \in \Sw(\R^N)$,% and $0 \leq r < s < t \leq 1$
\[
\tag{\ref{eq:sobolev1}} [f]_{{\dot{W}}^{s,p}(\R^N)}  \aleq \frac{1}{(s-r)^{\frac{1}{p}}}\, [f]_{\dot{F}^{r}_{p,2}(\R^N)} + \frac{1}{(t-s)^{\frac{1}{p}}}\, [f]_{\dot{F}^{t}_{p,2}(\R^N)}.
\]
The constant $C$ depends on $p$ and $N$ only. 
%As before we observe that in view of \Cref{co:approxschawrtz} we may assume that $f \in \mathcal{F}^{-1}(C_c^\infty(\R^N \backslash \{0\}))$, which is the standing assumption for the remainder of this section. From 
In view of \Cref{la:firststep},
\eqref{eq:sobolev1} is then a consequence of the following four lemmata.

\begin{lemma}
Let $p \in (1,\infty)$, $0 \le r < s$. Then %For any $r \in [0,s)$, $s > 0$ and $p \in (1,\infty)$,
 \[
  \brac{\sum_{k < 0} \int_{\R^N}\brac{\sum_{j \geq 0} |2^{ks} \Delta_{k+j} f(x)|^2}^{\frac{p}{2}} dx}^{\frac{1}{p}}
  \aleq \frac{1}{(s-r)^{\frac{1}{p}}} [f]_{\dot{F}^{r}_{p,2}(\R^N)}. 
 \]
\end{lemma}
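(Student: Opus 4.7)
The plan is to reduce the inner sum to a form where the $\dot{F}^r_{p,2}$ seminorm appears, by extracting a factor which decays geometrically in $k$ for $k<0$. Concretely, I introduce the auxiliary sequence
\[
F_\ell(x) := 2^{\ell r}\Delta_\ell f(x),
\]
so that by the definition of the homogeneous Triebel–Lizorkin seminorm,
\[
\brac{\int_{\R^N}\brac{\sum_{\ell \in \Z}|F_\ell(x)|^2}^{p/2}dx}^{1/p} \aeq [f]_{\dot{F}^{r}_{p,2}(\R^N)}.
\]

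First, substitute $m=j$ and note that for any fixed $k$ and $x$,
\[
\sum_{j \geq 0}\abs{2^{ks}\Delta_{k+j}f(x)}^2 = 2^{2k(s-r)}\sum_{m\geq 0}2^{-2mr}\abs{F_{k+m}(x)}^2.
\]
Since $r\geq 0$ and $m\geq 0$, one has $2^{-2mr}\leq 1$, so this quantity is bounded by $2^{2k(s-r)}\sum_{\ell\in\Z}|F_\ell(x)|^2$, an expression whose dependence on $k$ has been completely isolated into the prefactor $2^{2k(s-r)}$.

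Next, raise to the $p/2$-power, integrate in $x$, and sum in $k<0$. This produces the bound
\[
\sum_{k<0}\int_{\R^N}\brac{\sum_{j\geq 0}\abs{2^{ks}\Delta_{k+j}f(x)}^2}^{p/2}dx
\;\aleq\; \brac{\sum_{k<0}2^{kp(s-r)}}\, [f]_{\dot{F}^{r}_{p,2}(\R^N)}^{p}.
\]
Since $s-r>0$, the geometric sum estimate \eqref{eq:geoms<1} (applied with $\sigma = s-r$) gives
\[
\sum_{k<0}2^{kp(s-r)} \aeq \frac{1}{s-r}.
\]
Taking $p$-th roots yields the inequality with the claimed constant $(s-r)^{-1/p}$.

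There isn't really a main obstacle here: the only moving piece is the decomposition $2^{ks}=2^{(k+m)r}\cdot 2^{-mr}\cdot 2^{k(s-r)}$, after which the factor $2^{-mr}$ is harmless (being $\leq 1$ for $m,r\geq 0$), the factor $2^{(k+m)r}$ combines with $\Delta_{k+m}f$ to form $F_{k+m}$, and the leftover factor $2^{k(s-r)}$ is exactly what delivers geometric decay in $k<0$ and hence the sharp blow-up rate $(s-r)^{-1/p}$.
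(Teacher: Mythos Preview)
Your proof is correct and is essentially identical to the paper's own argument: both rewrite $2^{ks}=2^{k(s-r)}\,2^{(k+j)r}\,2^{-jr}$, discard the harmless factor $2^{-jr}\le 1$ (equivalently, the paper inserts $2^{jr}\ge 1$), extend the inner $\ell^2$ sum to all of $\Z$, and then evaluate the remaining geometric sum $\sum_{k<0}2^{k(s-r)p}\aeq (s-r)^{-1}$ via \eqref{eq:geoms<1}. The only cosmetic difference is your introduction of the notation $F_\ell=2^{\ell r}\Delta_\ell f$, which makes the bookkeeping slightly cleaner.
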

\begin{proof}
We have for any $0 \le r < s$
\[
\begin{split}
\sum_{k < 0} \brac{\sum_{j \geq 0} \abs{ 2^{ks} \Delta_{k+j} f(x)}^2}^{\frac{p}{2}} &\leq \sum_{k < 0} \brac{\sum_{j \geq 0} \abs{2^{jr} 2^{ks} \Delta_{k+j} f(x)}^2}^{\frac{p}{2}} \\
&= \sum_{k < 0} 2^{k(s-r)p} \brac{\sum_{j < 0} \abs{2^{(k+j)r} \Delta_{k+j} f(x)}^2}^{\frac{p}{2}}.
\end{split}
\]
We extend the sum over $j$ to all integers, and use \eqref{eq:geoms<1} with $s-r$ in place of $\sigma$ to evaluate the sum over $k$. This gives
\[
\begin{split}
\sum_{k < 0} \brac{\sum_{j \geq 0} \abs{ 2^{ks} \Delta_{k+j} f(x)}^2}^{\frac{p}{2}} 
&\aleq \frac{1}{s-r} \brac{\sum_{\ell \in \Z} \abs{2^{\ell r} \Delta_{\ell} f(x)}^2}^{\frac{p}{2}},
\end{split}
\]
which gives the conclusion of the lemma upon integrating in $x$.
\end{proof}

\begin{lemma}
Let $p \in (1,\infty)$ and $s < t \leq 1$. Then %For any $t \in (s,1]$, $p \in (1,\infty)$,
\[
 \brac{\sum_{k \geq 0} \int_{\R^N} \brac{\sum_{j < 0} \abs{2^{j} 2^{ks} \Delta_{k+j} f(x)}^2}^{\frac{p}{2}} dx}^{\frac{1}{p}} \aleq \frac{1}{(t-s)^{\frac{1}{p}}}\, [f]_{\dot{F}^{t}_{p,2}(\R^N)}.
\]
\end{lemma}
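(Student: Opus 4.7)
The approach mirrors the proof of the preceding lemma, but swaps the roles of $j$ and $k$ and uses the hypothesis $t \leq 1$ to absorb the factor $2^j$. Concretely, since $t \leq 1$ and $j < 0$, we have $2^{j(1-t)} \leq 1$, and therefore
\[
2^{j} 2^{ks} \leq 2^{jt} 2^{ks} = 2^{(k+j)t} \cdot 2^{-k(t-s)}.
\]
The key gain here is that the $k$-dependent prefactor $2^{-k(t-s)}$ is independent of $j$, which is what will generate the $\tfrac{1}{t-s}$ factor after summation in $k \geq 0$.

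First I would apply this inequality inside the $\ell^2$-sum over $j<0$, yielding pointwise
\[
\sum_{j<0} \abs{2^{j}2^{ks}\Delta_{k+j}f(x)}^2 \leq 2^{-2k(t-s)} \sum_{j<0} \abs{2^{(k+j)t}\Delta_{k+j}f(x)}^2 \leq 2^{-2k(t-s)}\sum_{\ell \in \Z}\abs{2^{\ell t}\Delta_\ell f(x)}^2,
\]
where in the last step I extend the sum over $j$ to all integers (which can only increase the quantity). Raising to the power $p/2$ and summing over $k \geq 0$ produces the geometric series $\sum_{k\geq 0} 2^{-kp(t-s)}$, which by \eqref{eq:geoms>0} (applied with $t-s$ in place of $s$) is $\aeq \tfrac{1}{t-s}$.

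Putting this together and then integrating over $x \in \R^N$, I obtain
\[
\sum_{k \geq 0} \int_{\R^N}\brac{\sum_{j<0}\abs{2^{j}2^{ks}\Delta_{k+j}f(x)}^2}^{\frac{p}{2}}\,dx \aleq \frac{1}{t-s}\int_{\R^N}\brac{\sum_{\ell \in \Z}\abs{2^{\ell t}\Delta_\ell f(x)}^2}^{\frac{p}{2}}\,dx = \frac{1}{t-s}\,[f]_{\dot{F}^{t}_{p,2}(\R^N)}^{p},
\]
and taking $p$-th roots gives the claim. There is no significant obstacle here; the only thing to note is that the condition $t \leq 1$ is used precisely to ensure $2^{j}\leq 2^{jt}$ for $j<0$, whereas the condition $s<t$ is used precisely so that $\sum_{k \geq 0}2^{-kp(t-s)}$ is a convergent geometric series comparable to $\tfrac{1}{t-s}$.
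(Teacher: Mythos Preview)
Your proof is correct and follows essentially the same approach as the paper: both use $2^{j}\leq 2^{jt}$ for $j<0$ and $t\leq 1$, factor out $2^{-k(t-s)p}$, extend the $j$-sum to all of $\Z$, sum the resulting geometric series in $k\geq 0$ via \eqref{eq:geoms>0}, and then integrate in $x$.
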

\begin{proof}
We have for any $s < t \leq 1$
\[
\begin{split}
\sum_{k \geq 0} \brac{\sum_{j < 0} \abs{2^{j} 2^{ks} \Delta_{k+j} f(x)}^2}^{\frac{p}{2}} &\leq \sum_{k \geq 0} \brac{\sum_{j < 0} \abs{2^{jt} 2^{ks} \Delta_{k+j} f(x)}^2}^{\frac{p}{2}} \\
&= \sum_{k \geq 0} 2^{-k(t-s)p} \brac{\sum_{j < 0} \abs{2^{(k+j)t} \Delta_{k+j} f(x)}^2}^{\frac{p}{2}}.
\end{split}
\]
We extend the sum over $j$ to all integers, and use \eqref{eq:geoms>0} with $t-s$ in place of $s$ to evaluate the sum over $k$. This gives
\[
\begin{split}
\sum_{k \geq 0} \brac{\sum_{j < 0} \abs{2^{j} 2^{ks} \Delta_{k+j} f(x)}^2}^{\frac{p}{2}} 
&\aleq \frac{1}{t-s} \brac{\sum_{\ell \in \Z} \abs{2^{\ell t} \Delta_{\ell} f(x)}^2}^{\frac{p}{2}},
\end{split}
\]
which gives the conclusion of the lemma upon integrating in $x$.
\end{proof}

\begin{lemma}
Let $p \in (1,\infty)$, $r < s$ and $r \le 1$. Then %For any $r \in (-\infty,s)$, $s \in (0,1)$ and $p \in (1,\infty)$,
\[
 \brac{\sum_{k \leq 0} \int_{\R^N}\brac{\sum_{j < 0} \abs{2^j 2^{ks} \Delta_{k+j} f(x)}^2}^{\frac{p}{2}} dx}^{\frac{1}{p}} \aleq \frac{1}{(s-r)^{\frac{1}{p}}}\, [f]_{\dot{F}^{r}_{p,2}(\R^N)}.
\]
\end{lemma}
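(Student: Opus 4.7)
The plan is to imitate the argument of the first lemma of \Cref{s:sobolevbound}, which handled the analogous sum with $k < 0$ and $j \geq 0$. The starting point is the algebraic identity
\[
2^j 2^{ks} = 2^{j(1-r)} \cdot 2^{k(s-r)} \cdot 2^{(k+j)r},
\]
which decomposes the weight into a piece matching the target semi-norm $[f]_{\dot{F}^{r}_{p,2}}$ (namely $2^{(k+j)r}$), a geometric factor in $k$ summable over $k \leq 0$ because $s - r > 0$ (namely $2^{k(s-r)}$), and the auxiliary factor $2^{j(1-r)}$. The hypothesis $r \leq 1$ gives $1 - r \geq 0$, and combined with $j < 0$ this yields $2^{2j(1-r)} \leq 1$, so the auxiliary factor is harmless.

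Applying $2^{2j(1-r)} \leq 1$ termwise I would obtain
\[
\sum_{j < 0} |2^j 2^{ks} \Delta_{k+j} f(x)|^2 \leq 2^{2k(s-r)} \sum_{j<0} |2^{(k+j)r} \Delta_{k+j} f(x)|^2,
\]
and after the change of variables $\ell = k+j$ I would bound the inner sum by the $k$-independent quantity $\sum_{\ell \in \Z} |2^{\ell r} \Delta_\ell f(x)|^2$. Raising to the power $p/2$ and summing over $k \leq 0$ then reduces to the geometric series estimate \eqref{eq:geoms<1} with $\sigma = s - r > 0$, which yields $\sum_{k \leq 0} 2^{k(s-r)p} \aeq \frac{1}{s-r}$. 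Integrating in $x$ and extracting the $p$-th root would then deliver the required bound $\frac{1}{(s-r)^{1/p}}\,[f]_{\dot{F}^{r}_{p,2}(\R^N)}$.

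There is no real obstacle here: the argument is strictly parallel to that of the first lemma of this section, with the roles of $(k<0, j\geq 0)$ and $(k\leq 0, j<0)$ interchanged, and the only new ingredient is the bound $2^{2j(1-r)} \leq 1$, which exploits the hypothesis $r \leq 1$ exactly as the lemma is stated.
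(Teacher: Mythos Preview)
Your proposal is correct and essentially identical to the paper's proof: the paper writes $2^{j} \leq 2^{jr}$ (for $j<0$, $r\leq 1$) and then $2^{jr}2^{ks} = 2^{k(s-r)}2^{(k+j)r}$, which is exactly your factorization $2^{j}2^{ks} = 2^{j(1-r)}2^{k(s-r)}2^{(k+j)r}$ together with $2^{j(1-r)} \leq 1$. The rest (extending the $j$-sum to all integers, summing the geometric series in $k$ via \eqref{eq:geoms<1}, integrating in $x$) matches the paper verbatim.
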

\begin{proof}
We have for any $r \le 1$ and $s > r$
\[
\begin{split}
\sum_{k \leq 0} \brac{\sum_{j < 0} \abs{2^{j} 2^{ks} \Delta_{k+j} f(x)}^2}^{\frac{p}{2}} &\leq \sum_{k \leq 0} \brac{\sum_{j < 0} \abs{2^{jr} 2^{ks} \Delta_{k+j} f(x)}^2}^{\frac{p}{2}} \\
&= \sum_{k \leq 0} 2^{k(s-r)p} \brac{\sum_{j < 0} \abs{2^{(k+j)r} \Delta_{k+j} f(x)}^2}^{\frac{p}{2}}.
\end{split}
\]
We extend the sum over $j$ to all integers, and use \eqref{eq:geoms<1} with $s-r$ in place of $\sigma$ to evaluate the sum over $k$. This gives
\[
\begin{split}
\sum_{k \leq 0} \brac{\sum_{j < 0} \abs{2^{j} 2^{ks} \Delta_{k+j} f(x)}^2}^{\frac{p}{2}} 
&\aleq \frac{1}{s-r} \brac{\sum_{\ell \in \Z} \abs{2^{\ell r} \Delta_{\ell} f(x)}^2}^{\frac{p}{2}},
\end{split}
\]
which gives the conclusion of the lemma upon integrating in $x$.
\end{proof}

\begin{lemma}
Let $p \in (1,\infty)$, $s < t$ and $t \ge 0$. Then %For any $t \in (s,\infty)$, $s > 0$ and $p \in (1,\infty)$,
 \[
  \brac{\sum_{k \geq 0} \int_{\R^N}\brac{\sum_{j \geq 0} |2^{ks} \Delta_{k+j} f(x)|^2}^{\frac{p}{2}} dx}^{\frac{1}{p}}
  \aleq \frac{1}{(t-s)^{\frac{1}{p}}} [f]_{\dot{F}^{t,p}_{2}(\R^N)}. 
 \]
\end{lemma}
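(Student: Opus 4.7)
The plan is to mirror the arguments of the three preceding lemmas almost verbatim, with the roles of the two hypotheses played as follows: $t \geq 0$ is used to insert the weight $2^{jt}$ on the range $j \geq 0$, and $s < t$ is used to sum a geometric series in $k \geq 0$. The starting pointwise observation is that since $j \geq 0$ and $t \geq 0$ we have $2^{jt} \geq 1$, hence
\[
|2^{ks} \Delta_{k+j} f(x)|^2 \leq 2^{2jt} |2^{ks} \Delta_{k+j} f(x)|^2 = 2^{-2k(t-s)} |2^{(k+j)t} \Delta_{k+j} f(x)|^2.
\]

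First, I would sum over $j \geq 0$, raise to the power $p/2$, and pull the $k$-dependent factor out of the inner sum to obtain
\[
\sum_{k \geq 0} \brac{\sum_{j \geq 0} |2^{ks} \Delta_{k+j} f(x)|^2}^{\frac{p}{2}} \leq \sum_{k \geq 0} 2^{-k(t-s)p} \brac{\sum_{j \geq 0} |2^{(k+j)t} \Delta_{k+j} f(x)|^2}^{\frac{p}{2}}.
\]
Next, I would enlarge the inner sum from $j \geq 0$ to all $\ell = k+j \in \Z$, which is legitimate since each summand is nonnegative. The bracketed factor then no longer depends on $k$, and equals $\brac{\sum_{\ell \in \Z} |2^{\ell t} \Delta_\ell f(x)|^2}^{p/2}$. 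Finally, invoking \eqref{eq:geoms>0} with $t-s > 0$ in place of $s$ gives $\sum_{k \geq 0} 2^{-k(t-s)p} \aeq \frac{1}{t-s}$. Integrating in $x$ and taking the $p$-th root yields the desired inequality.

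There is no real obstacle here, as this is the direct analogue of the three preceding lemmas with the indices permuted. The only bookkeeping to monitor is that the hypothesis $t \geq 0$ is precisely what licenses the insertion $2^{jt} \geq 1$ for $j \geq 0$, while $s < t$ is precisely what makes the geometric series in $k$ convergent; both hypotheses in the statement are used exactly once, and the combination $(t-s)^{-1/p}$ in the conclusion matches the prefactor produced by the geometric sum.
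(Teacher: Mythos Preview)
Your argument is correct and is essentially identical to the paper's proof: insert the factor $2^{jt}\geq 1$ (using $j\geq 0$, $t\geq 0$), rewrite as $2^{-k(t-s)p}\bigl(\sum_{j\geq 0}|2^{(k+j)t}\Delta_{k+j}f|^2\bigr)^{p/2}$, extend the inner sum to all integers, and evaluate the geometric series in $k$ via \eqref{eq:geoms>0}.
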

\begin{proof}
We have for any $t \ge 0$ and $s < t$
\[
\begin{split}
\sum_{k \geq 0} \brac{\sum_{j \geq 0} \abs{ 2^{ks} \Delta_{k+j} f(x)}^2}^{\frac{p}{2}} &\leq \sum_{k \geq 0} \brac{\sum_{j \geq 0} \abs{2^{jt} 2^{ks} \Delta_{k+j} f(x)}^2}^{\frac{p}{2}} \\
&= \sum_{k \geq 0} 2^{-k(t-s)p} \brac{\sum_{j < 0} \abs{2^{(k+j)t} \Delta_{k+j} f(x)}^2}^{\frac{p}{2}}.
\end{split}
\]
We extend the sum over $j$ to all integers, and use \eqref{eq:geoms>0} with $t-s$ in place of $s$ to evaluate the sum over $k$. This gives
\[
\begin{split}
\sum_{k \geq 0} \brac{\sum_{j \geq 0} \abs{ 2^{ks} \Delta_{k+j} f(x)}^2}^{\frac{p}{2}}  
&\aleq \frac{1}{t-s} \brac{\sum_{\ell \in \Z} \abs{2^{\ell t} \Delta_{\ell} f(x)}^2}^{\frac{p}{2}},
\end{split}
\]
which gives the conclusion of the lemma upon integrating in $x$.
\end{proof}

\section{The lower bounds {for \texorpdfstring{$[f]_{\dot{W}^{s,p}}$}{[f]{Wsp}}}: proof via duality} \label{sect:lower}

We obtain the lower bounds {for $[f]_{\dot{W}^{s,p}}$ in \Cref{th:mainFspp}, \Cref{th:mainFsp2} and \Cref{th:sobolev}} from the {corresponding} upper bounds by a duality argument, and using the integral representation of the fractional Laplacian, adapting the proof of  \Cref{pr:peq2easyproof}.

Our main ingredient is the following duality estimate.
\begin{proposition}\label{pr:dualityforlower}
Let $p,q \in (1,\infty)$, $s \in (0,1)$. Let $t_1,t_2 >0$, such that $t_1 + t_2 = 2s$. Let $p_1,p_2 \in (1,\infty)$, such that $\frac{1}{p_1} + \frac{1}{p_2} = 1$. Then for any $f \in \Sw(\R^N)$ we have 
\[
 [f]_{\dot{F}^{s}_{p,q}(\R^N)} \aleq  \min\{s,(1-s)\}\, [f]_{{\dot{W}}^{t_1,p_1}(\R^N)}\, \sup_{[g]_{\dot{F}^{s}_{p',q'}} \leq 1} [g]_{{\dot{W}}^{t_2,p_2}(\R^N)},
\]
where the supremum on the right-hand side is over Schwartz functions $g\in \mathcal{F}^{-1}(C_c^\infty(\R^N \backslash \{0\}))$. %$g\in \Sw(\R^N) \cap \mathcal{F}^{-1}(C_c^\infty(\R^N \backslash \{0\}))$.

We also have
\[
 [f]_{\dot{F}^{s}_{p,q}(\R^N)} \aleq  [f]_{\dot{F}^{0}_{{p,q}}(\R^N)} + \min\{s,(1-s)\}\, [f]_{{\dot{W}}^{t_1,p_1}(\R^N)}\, \sup_{\substack{[g]_{\dot{F}^{s}_{p',q'}} \leq 1 \\ \text{supp} \mathcal{F}g \subset \{|\xi| \geq 1/4\}}} [g]_{{\dot{W}}^{t_2,p_2}(\R^N)}
\]
where this time the supremum on the right-hand side is over Schwartz functions $g$ with the indicated constraints. %$g\in \Sw(\R^N) \cap \mathcal{F}^{-1}(C_c^\infty(\R^N \backslash \{0\}))$.
\end{proposition}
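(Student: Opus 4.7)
The plan is to combine the duality characterization of the $\dot{F}^s_{p,q}$ seminorm with the singular integral representation of $(-\Delta)^s$ coming from \Cref{la:fraclap}, and then introduce the $\dot{W}^{t_i,p_i}$ seminorms by splitting the resulting kernel using $t_1+t_2=2s$ and applying H\"older with exponents $p_1,p_2$. Conceptually this mirrors the $p=2$ computation in \Cref{pr:peq2easyproof}, except the self-pairing of $\laps{s}f$ with itself is replaced by a dual pairing of $\laps{s}f$ with $\laps{s}g$ for an admissible test function $g$.

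First I would apply \Cref{th:dual} to produce a function $g\in\mathcal{F}^{-1}(C_c^\infty(\R^N\setminus\{0\}))$ with $[g]_{\dot{F}^s_{p',q'}}\le 1$ and
\[
[f]_{\dot{F}^s_{p,q}(\R^N)} \aleq \left|\int_{\R^N} \laps{s}f(x)\,\laps{s}g(x)\,dx\right|.
\]
Since $f\in\Sw(\R^N)$ and $\laps{s}g\in\Sw(\R^N)$, Plancherel (equivalently self-adjointness of $\laps{s}$) rewrites the right-hand side as $\left|\int_{\R^N}(-\Delta)^s f(x)\,g(x)\,dx\right|$. Now I would invoke \Cref{la:fraclap} with exponent $s\in(0,1)$ to obtain
\[
\int_{\R^N}(-\Delta)^s f(x)\,g(x)\,dx \;=\; c_{N,s}\int_{\R^N}\!\!\int_{\R^N}\frac{2f(x)-f(x+z)-f(x-z)}{|z|^{N+2s}}\,g(x)\,dz\,dx,
\]
with $c_{N,s}\aeq \min\{s,1-s\}$.

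The next step is a symmetrization in the spirit of \eqref{eq:shift}: averaging the integrand in $z\mapsto -z$ and applying the translations $x\mapsto x\pm z$ in the terms involving $f(x\pm z)g(x)$, one rewrites the double integral in the symmetric Gagliardo form
\[
c_{N,s}\int_{\R^N}\!\!\int_{\R^N}\frac{(f(x+z)-f(x))\,(g(x+z)-g(x))}{|z|^{N+2s}}\,dz\,dx.
\]
Using $\tfrac{1}{p_1}+\tfrac{1}{p_2}=1$ and $t_1+t_2=2s$, I split the kernel as $|z|^{N+2s}=|z|^{N/p_1+t_1}\cdot|z|^{N/p_2+t_2}$ and apply H\"older's inequality on the product space $\R^N_x\times\R^N_z$ to get
\[
\left|\int\!\!\int\frac{(f(x+z)-f(x))(g(x+z)-g(x))}{|z|^{N+2s}}\,dz\,dx\right| \;\le\; [f]_{\dot{W}^{t_1,p_1}(\R^N)}\,[g]_{\dot{W}^{t_2,p_2}(\R^N)}.
\]
Combining the three displays and taking the supremum over admissible $g$ gives the first claim. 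The second, inhomogeneous, statement follows by exactly the same argument, except that one starts from \Cref{th:dualinhom} in place of \Cref{th:dual}; this yields the additive $[f]_{\dot{F}^0_{p,q}}$ term and restricts the Fourier support of $g$ to $\{|\xi|\ge 1/4\}$, but all subsequent manipulations are unaffected.

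The main obstacle is really just the symmetrization step: the second-difference kernel appearing in \Cref{la:fraclap} must be rewritten as a symmetric bilinear form in $f$ and $g$, so that H\"older produces two Gagliardo seminorms rather than a one-sided expression. Once that identity is established, the rest is algebraic: the exponent splitting $t_1+t_2=2s$ is tailored so that the kernel factors, the $\min\{s,1-s\}$ prefactor is inherited directly from $c_{N,s}$, and the passage to the supremum over $g$ with $[g]_{\dot{F}^s_{p',q'}}\le 1$ is immediate since $g$ has been fixed independently of $f$.
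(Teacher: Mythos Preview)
Your proposal is correct and follows essentially the same route as the paper: invoke \Cref{th:dual} (resp.\ \Cref{th:dualinhom}) to produce an admissible $g$, pass from $\int \laps{s}f\,\laps{s}g$ to $\int(-\Delta)^s f\cdot g$ via Plancherel, apply \Cref{la:fraclap} and the shift/symmetrization of \eqref{eq:shift} to reach the bilinear Gagliardo integrand, then split the kernel via $t_1+t_2=2s$ and H\"older. One minor phrasing quibble: the $g$ produced by duality \emph{does} depend on $f$, but this is harmless since it belongs to the class over which you take the supremum.
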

\begin{proof}
By duality, \Cref{th:dual}, for any $f \in \Sw(\R^N)$ there exists $g \in \mathcal{F}^{-1}(C_c^\infty(\R^N \backslash \{0\}))$ with $[g]_{\dot{F}^{s}_{p',q'}} \leq 1$ and %$g\in \Sw(\R^N) \cap \mathcal{F}^{-1}(C_c^\infty(\R^N \backslash \{0\}))$.
\begin{equation}\label{eq:dualfl:34}
 [f]_{\dot{F}^{s}_{{p,q}}(\R^N)} \aleq \abs{\int_{\R^N} \laps{s} f(x) \laps{s} g(x) \ dx}.
\end{equation}
Integrating by parts the operator $\laps{s}$ (this can be done via Fourier transform and Plancherel, since $f,g \in \Sw(\R^N)$) we find for a constant $C_{\mathcal{F}}$ depending on the precise choice of Fourier transform
\[
\begin{split}
\int_{\R^N} \laps{s} f \laps{s} g =& C_{\mathcal{F}}\int_{\R^N} |\xi|^{s} \mathcal{F}f(\xi)\, |\xi|^s \mathcal{F}g(-\xi)\, {d\xi}\\
=& C_{\mathcal{F}}\int_{\R^N} |\xi|^{2s} \mathcal{F}f(\xi)\, \mathcal{F}g(-\xi)\, {d\xi}=\int_{\R^N} (-\lap)^{s} f\, g.
\end{split}
\]
From \Cref{la:fraclap} and symmetry arguments we then find with a constant $c_{N,s}$ such that $c_{N,s} \aeq \min\{s,1-s\}$ and
\[%\begin{equation}\label{eq:symmetryargsfraclap}
\int_{\R^N} \laps{s} f \laps{s} g= c_{N,s} \int_{\R^N} \int_{\R^N} \frac{(2f(x)-f(x+z)-f(x-z))g(x)}{|z|^{N+2s}}\, dz dx
\]
Now since $f, g \in \Sw(\R^N)$ and $2s \in (0,2)$, we may apply Fubini's theorem to interchange the $z$ and the $x$ integral, and use a similar change of variable as in \eqref{eq:shift}. Then 
\[
\int_{\R^N} \laps{s} f \laps{s} g
=c_{N,s} \int_{\R^N} \int_{\R^N} \frac{(f(x+z)-f(x))(g(x+z)-g(x))}{|z|^{N+2s}}\, dx dz.
\]
Hence using the bound for $c_{N,s}$, and writing $N+2s = \frac{N}{p_1}+t_1 + \frac{N}{p_2} + t_2$, we obtain
\[
\begin{split}
 [f]_{\dot{F}^{s}_{p,p}} \aleq
%& \min\{s,(1-s)\} \abs{\int_{\R^N} \int_{\R^N} \frac{(f(x+z)-f(x))( g(x+z)-g(x))}{|z|^{N+2s}}\, dz dx}\\  =&
  \min\{s,(1-s)\} \abs{\int_{\R^N} \int_{\R^N} \frac{(f(x+z)-f(x))}{|z|^{\frac{N}{p_1}+t_1}}\frac{( g(x+z)-g(x))}{|z|^{\frac{N}{p_2}+t_2}}\, dx dz}.
 \end{split}
\]
Applying twice the integral H\"older inequality yields
\[
 [f]_{\dot{F}^{s}_{p,p}(\R^N)} \aleq \min\{s,(1-s)\} [f]_{{\dot{W}}^{t_1,p_1}(\R^N)}\, [g]_{{\dot{W}}^{t_2,p_2}(\R^N)} .
\]
This concludes the proof of the first inequality.

The proof of the second inequality is very similar. Instead of \Cref{th:dual} we use \Cref{th:dualinhom} and obtain instead of \eqref{eq:dualfl:34}
\[
[f]_{\dot{F}^{s}_{{p,q}}} \aleq [f]_{\dot{F}^{0}_{{p,q}}(\R^N)} + \abs{\int_{\R^N} \laps{s} f(x) \, \laps{s} g(x) \ dx },
\]
where this time we have $g \in \Sw(\R^N)$, $\mathcal{F}g$ supported on $\{|\xi| \geq 1/4\}$, and $[g]_{\dot{F}^{s}_{p',q'}(\R^N)} \leq 1$. The remaining arguments are the same.
\end{proof}

With \Cref{pr:dualityforlower} we obtain the lower bound of \eqref{eq:Fspppgeq2}.
\begin{proposition}
Let $p \in [2,\infty)$, $s \in (0,1)$ and $f \in \Sw(\R^N)$. Then 
\[
 \brac{\frac{1}{s^{\frac{1}{p}}} + \frac{1}{(1-s)^{\frac{1}{p}}}}\, [f]_{\dot{F}^{s}_{p,p}(\R^N)} \aleq [f]_{{\dot{W}}^{s,p}(\R^N)}.
\]
%The constant $C$ depends on $N$ and $p$ only.
\end{proposition}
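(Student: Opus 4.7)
The strategy is to invoke the duality estimate of \Cref{pr:dualityforlower} and then absorb the auxiliary factor by applying the already-proved upper bound \eqref{eq:upper1} (valid for exponents in $(1,2]$) to the dual test function. Since $p\geq 2$ here, the dual exponent $p'$ lies in $(1,2]$, which is exactly the regime where \eqref{eq:upper1} is available.

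Concretely, first I would apply \Cref{pr:dualityforlower} with $q=p$ (so $q'=p'$) and the symmetric split $t_1=t_2=s$, $p_1=p$, $p_2=p'$. This yields
\[
[f]_{\dot F^{s}_{p,p}(\R^N)} \aleq \min\{s,1-s\}\,[f]_{\dot W^{s,p}(\R^N)}\,\sup_{[g]_{\dot F^{s}_{p',p'}}\leq 1}[g]_{\dot W^{s,p'}(\R^N)},
\]
where the supremum ranges over the class of test functions from \Cref{pr:dualityforlower}. Second, since $p'\in(1,2]$, I apply \eqref{eq:upper1} to each admissible $g$ to obtain
\[
[g]_{\dot W^{s,p'}(\R^N)} \aleq \brac{\frac{1}{s^{1/p'}}+\frac{1}{(1-s)^{1/p'}}}[g]_{\dot F^{s}_{p',p'}(\R^N)} \aleq \frac{1}{s^{1/p'}}+\frac{1}{(1-s)^{1/p'}}.
\]

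Combining these two displays gives
\[
[f]_{\dot F^{s}_{p,p}(\R^N)} \aleq \min\{s,1-s\}\brac{\frac{1}{s^{1/p'}}+\frac{1}{(1-s)^{1/p'}}}[f]_{\dot W^{s,p}(\R^N)}.
\]
Finally, a routine splitting into $s\leq 1/2$ and $s\geq 1/2$ shows that the prefactor satisfies
\[
\min\{s,1-s\}\brac{\frac{1}{s^{1/p'}}+\frac{1}{(1-s)^{1/p'}}} \aeq \min\{s,1-s\}^{1-1/p'} = \min\{s,1-s\}^{1/p} \aeq \brac{\frac{1}{s^{1/p}}+\frac{1}{(1-s)^{1/p}}}^{-1},
\]
using $1-1/p'=1/p$. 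Rearranging yields the desired lower bound.

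The only nontrivial ingredient is choosing the symmetric split $t_1=t_2=s$, $p_1=p$, $p_2=p'$, which is the unique choice that both keeps the $\dot W^{s,p}$ seminorm of $f$ on the right-hand side and makes the dual seminorm fall into the $p'\leq 2$ regime of \eqref{eq:upper1}. Everything else is a straightforward algebraic check of the $s$-dependence of the constants, so no real obstacle is expected.
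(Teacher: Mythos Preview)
Your proposal is correct and follows essentially the same argument as the paper: apply \Cref{pr:dualityforlower} with the symmetric split $t_1=t_2=s$, $p_1=p$, $p_2=p'$, use the upper bound \eqref{eq:upper1} on the dual test function (valid since $p'\in(1,2]$), and finish with the same algebraic simplification of the $s$-dependent prefactor that the paper records as \eqref{eq:s(1-s)p}.
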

\begin{proof}
Since $p \in [2, \infty)$, we have $p' =\frac{p}{p-1} \in (1,2]$. So from the upper bound \eqref{eq:Fspppleq2} for $g \in \Sw(\R^N)$ we have
\[
 [g]_{{\dot{W}}^{s,p'}(\R^N)} \aleq \brac{\frac{1}{s^{\frac{1}{p'}}} + \frac{1}{(1-s)^{\frac{1}{p'}}}} [g]_{\dot{F}^{s}_{{p'},p'}(\R^N)}.
\]
From \Cref{pr:dualityforlower}, we thus obtain
\[
\begin{split}
 [f]_{\dot{F}^{s}_{p,p}(\R^N)}  
 \aleq&\min\{s,(1-s)\} \brac{\frac{1}{s^{\frac{1}{p'}}} + \frac{1}{(1-s)^{\frac{1}{p'}}}}
 [f]_{{\dot{W}}^{s,p}(\R^N)}.
 \end{split}
\]
Now we can conclude since for any $s \in (0,1)$
\begin{equation} \label{eq:s(1-s)p}
 \brac{\min\{s,(1-s)\} \brac{\frac{1}{s^{\frac{1}{p'}}} + \frac{1}{(1-s)^{\frac{1}{p'}}}}}^{-1} \aeq \min\{s,(1-s)\}^{-\frac{1}{p}} \aeq \brac{\frac{1}{s^{\frac{1}{p}}} + \frac{1}{(1-s)^{\frac{1}{p}}}}
\end{equation}
\end{proof}

We also obtain the lower bound of \eqref{eq:Fspppleq2}:
\begin{proposition} 
Let $p \in (1,2]$, $s \in (0,1)$ and $f \in \Sw(\R^N)$. Then %For $p\in (1,2]$, $s \in (0,1)$, $f \in \Sw(\R^N)$ we have
\[
 \brac{\frac{1}{s^{\frac{1}{2}}} + \frac{1}{(1-s)^{\frac{1}{2}}}}\, [f]_{\dot{F}^{s}_{p,p}(\R^N)} \leq C [f]_{{\dot{W}}^{s,p}(\R^N)} \\
\]
%The constant $C$ depends on $N$ and $p$ only.
\end{proposition}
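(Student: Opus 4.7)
The strategy mirrors the previous proposition: we invoke the duality estimate in \Cref{pr:dualityforlower}, but this time pair the $p \in (1,2]$ case with the already-established upper bound \eqref{eq:Fspppgeq2} applied in the dual exponent $p' \in [2,\infty)$. Concretely, for any $g \in \Sw(\R^N)$ the inequality \eqref{eq:Fspppgeq2} gives
\[
[g]_{{\dot{W}}^{s,p'}(\R^N)} \aleq \brac{\frac{1}{s^{\frac{1}{2}}} + \frac{1}{(1-s)^{\frac{1}{2}}}}\, [g]_{\dot{F}^{s}_{p',p'}(\R^N)},
\]
with a constant depending only on $N$ and $p$.

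Next I apply \Cref{pr:dualityforlower} in the symmetric configuration $t_1 = t_2 = s$, $p_1 = p$, $p_2 = p'$, $q = p$, to the function $f$. This yields
\[
[f]_{\dot{F}^{s}_{p,p}(\R^N)} \aleq \min\{s,(1-s)\}\, [f]_{{\dot{W}}^{s,p}(\R^N)}\, \sup_{[g]_{\dot{F}^{s}_{p',p'}} \leq 1} [g]_{{\dot{W}}^{s,p'}(\R^N)},
\]
where the supremum is taken over Schwartz functions $g \in \mathcal{F}^{-1}(C_c^\infty(\R^N \setminus \{0\}))$. Plugging the upper bound for $[g]_{{\dot{W}}^{s,p'}}$ into this inequality gives
\[
[f]_{\dot{F}^{s}_{p,p}(\R^N)} \aleq \min\{s,(1-s)\}\, \brac{\frac{1}{s^{\frac{1}{2}}} + \frac{1}{(1-s)^{\frac{1}{2}}}}\, [f]_{{\dot{W}}^{s,p}(\R^N)}.
\]

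It remains to divide by the prefactor on the right-hand side, and to check the elementary identity
\[
\brac{\min\{s,(1-s)\} \brac{\frac{1}{s^{\frac{1}{2}}} + \frac{1}{(1-s)^{\frac{1}{2}}}}}^{-1} \aeq \min\{s,(1-s)\}^{-\frac{1}{2}} \aeq \frac{1}{s^{\frac{1}{2}}} + \frac{1}{(1-s)^{\frac{1}{2}}},
\]
entirely analogous to \eqref{eq:s(1-s)p} with $p=2$. Rearranging yields the desired lower bound. No step is genuinely hard here: the only subtlety is verifying that the exponents in \Cref{pr:dualityforlower} can be chosen so that on the $f$-side one recovers exactly $[f]_{{\dot{W}}^{s,p}}$, which forces the symmetric choice $t_1 = t_2 = s$ and hence uses the $p' \geq 2$ case of the upper bound.
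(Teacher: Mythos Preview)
Your proof is correct and follows essentially the same approach as the paper: apply \Cref{pr:dualityforlower} with $t_1=t_2=s$, $p_1=p$, $p_2=p'$, $q=p$, use the upper bound \eqref{eq:Fspppgeq2} in the dual exponent $p'\in[2,\infty)$ to control the supremum over $g$, and finish with the elementary equivalence for the prefactor. The only difference is that you spell out the parameter choices in \Cref{pr:dualityforlower} more explicitly than the paper does.
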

\begin{proof}
Since $p \in (1,2]$, we have $p'=\frac{p}{p-1} \in [2,\infty)$. So from the upper bound of \eqref{eq:Fspppgeq2} for $g \in \Sw(\R^N)$ we have
\[
 [g]_{{\dot{W}}^{s,p'}(\R^N)} \aleq \brac{\frac{1}{s^{\frac{1}{2}}} + \frac{1}{(1-s)^{\frac{1}{2}}}}\, [g]_{\dot{F}^{s}_{{p'},p'}(\R^N)}.
\]
From \Cref{pr:dualityforlower}, we thus obtain
\[
\begin{split}
 [f]_{\dot{F}^{s}_{p,p}(\R^N)}  \aleq&\min\{s,(1-s)\} \brac{\frac{1}{s^{\frac{1}{2}}} + \frac{1}{(1-s)^{\frac{1}{2}}}} [f]_{{\dot{W}}^{s,p}(\R^N)}.
% \\ \aeq&\min\{s,(1-s)\}^{\frac{1}{2}}\, [f]_{{\dot{W}}^{s,p}(\R^N)} .
 \end{split}
\]
Now we can conclude since for any $s \in (0,1)$
\[
\brac{\min\{s,(1-s)\}  \brac{\frac{1}{s^{\frac{1}{2}}} + \frac{1}{(1-s)^{\frac{1}{2}}}}}^{-1} \aeq \min\{s,(1-s)\}^{-\frac{1}{2}} \aeq \brac{\frac{1}{s^{\frac{1}{2}}} + \frac{1}{(1-s)^{\frac{1}{2}}}}.
\]
\end{proof}

Next is the proof of \eqref{eq:mainFsp2:1}.
\begin{proposition}
Let $p \in (1,2]$, $s \in (0,1)$ and $f \in \Sw(\R^N)$. Then %For $p \in (1,2]$, $s \in (0,1)$, $f \in \Sw(\R^N)$ we have
\[
 \brac{\frac{1}{s^{\frac{1}{p}}} + \frac{1}{(1-s)^{\frac{1}{p}}}}\, [f]_{\dot{F}^{s}_{p,2}(\R^N)} \aleq [f]_{{\dot{W}}^{s,p}(\R^N)}.
\]
%The constant $C$ depends on $N$ and $p$ only.
\end{proposition}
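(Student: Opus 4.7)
The plan is to mirror exactly the duality argument already used for the lower bounds of \eqref{eq:Fspppgeq2} and \eqref{eq:Fspppleq2}, replacing the role of $\dot{F}^s_{p,p}$ by $\dot{F}^s_{p,2}$ throughout. Concretely, I would apply \Cref{pr:dualityforlower} with $q=2$, $t_1=t_2=s$, $p_1=p$ and $p_2=p'=p/(p-1)$. Since $p \in (1,2]$, we have $p' \in [2,\infty)$, which puts the dual exponent in the regime where the upper bound \eqref{eq:mainFsp2:2} for $\dot{F}^s_{p',2}$ (proved in the previous section as \eqref{eq:upper3}) is already available.

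First I would invoke \Cref{pr:dualityforlower} to write
\[
 [f]_{\dot{F}^{s}_{p,2}(\R^N)} \aleq \min\{s,(1-s)\}\, [f]_{{\dot{W}}^{s,p}(\R^N)}\, \sup_{[g]_{\dot{F}^{s}_{p',2}} \leq 1} [g]_{{\dot{W}}^{s,p'}(\R^N)},
\]
where the supremum is taken over Schwartz $g$ with Fourier transform compactly supported away from the origin. Next I would bound the supremum using the already-proven upper bound \eqref{eq:mainFsp2:2} for $p' \in [2,\infty)$:
\[
 \sup_{[g]_{\dot{F}^{s}_{p',2}} \leq 1} [g]_{{\dot{W}}^{s,p'}(\R^N)} \aleq \frac{1}{s^{1/p'}} + \frac{1}{(1-s)^{1/p'}}.
\]

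Combining the two displays and using the elementary identity \eqref{eq:s(1-s)p},
\[
 \min\{s,(1-s)\}\brac{\frac{1}{s^{1/p'}}+\frac{1}{(1-s)^{1/p'}}} \aeq \min\{s,(1-s)\}^{1/p} \aeq \brac{\frac{1}{s^{1/p}}+\frac{1}{(1-s)^{1/p}}}^{-1},
\]
I obtain
\[
 [f]_{\dot{F}^{s}_{p,2}(\R^N)} \aleq \brac{\frac{1}{s^{1/p}}+\frac{1}{(1-s)^{1/p}}}^{-1} [f]_{{\dot{W}}^{s,p}(\R^N)},
\]
which after rearranging is the desired inequality. No step here is particularly delicate; all the hard analytic work (the duality statement and the upper bound for $p'\geq 2$) is already in place. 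The only thing to verify is the conjugate relation $1/p+1/p'=1$, which is what makes the exponents balance via \eqref{eq:s(1-s)p}.
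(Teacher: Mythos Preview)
Your proposal is correct and is essentially identical to the paper's own proof: apply \Cref{pr:dualityforlower} with $q=2$, $t_1=t_2=s$, $p_1=p$, $p_2=p'$, bound the dual term via \eqref{eq:mainFsp2:2} (valid since $p'\in[2,\infty)$), and simplify using \eqref{eq:s(1-s)p}. There is no difference in approach or in the steps.
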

\begin{proof}
 Since $p \in (1,2]$, we have $p' \in [2,\infty)$. So from the upper bound \eqref{eq:mainFsp2:2} for $g \in \Sw(\R^N)$ we have
\[
 [g]_{{\dot{W}}^{s,p'}(\R^N)} \aleq \brac{\frac{1}{s^{\frac{1}{p'}}} + \frac{1}{(1-s)^{\frac{1}{p'}}}} [g]_{\dot{F}^{s}_{{p',}2}(\R^N)}.
\]
From \Cref{pr:dualityforlower}, we thus obtain
\[
\begin{split}
 [f]_{\dot{F}^{s}_{p,2}(\R^N)}  \aleq& \min\{s,(1-s)\} \brac{\frac{1}{s^{\frac{1}{p'}}} + \frac{1}{(1-s)^{\frac{1}{p'}}}} [f]_{{\dot{W}}^{s,p}(\R^N)}\\
 \end{split}
\]
We conclude by using \eqref{eq:s(1-s)p}.
\end{proof}

The lower bound of \Cref{th:sobolev} is contained in the following statement:
\begin{proposition}
Let $\Lambda > 1$ such that $(1-s) \leq \frac{1}{2\Lambda}$. Let $\bar{r} \in (0,s)$ such that $(1-\bar{r})=\Lambda (1-s)$. Let $r \in [0,\bar{r}]$. Then 
\[
 [f]_{\dot{F}^{r}_{p,2}(\R^N)} \aleq \brac{\|f\|_{L^p(\R^N)} + (1-s)^{\frac{1}{p}} [f]_{{\dot{W}}^{s,p}(\R^N)}}.
\]
%The constant $C$ depends on $N$ and $p$ only.
\end{proposition}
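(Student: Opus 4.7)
The plan is to reduce the estimate for general $r \in [0, \bar r]$ to the single endpoint $r = \bar r$ via a Littlewood-Paley comparison, and then attack the endpoint using the duality \Cref{pr:dualityforlower} combined with the already-proved upper bound \eqref{eq:sobolev1}.

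\textbf{Reduction step.} For any $r \in [0, \bar r]$, I will split the Littlewood-Paley square function defining $[f]_{\dot F^r_{p,2}}$ at $j=0$. On low frequencies ($j \leq 0$) one has $2^{jr} \leq 1$; on high frequencies ($j > 0$) one has $2^{jr} \leq 2^{j\bar r}$. Minkowski in $\ell^2$ and \Cref{la:LittlewoodPaley} then give
\[
[f]_{\dot F^r_{p,2}(\R^N)} \aleq \|f\|_{L^p(\R^N)} + [f]_{\dot F^{\bar r}_{p,2}(\R^N)},
\]
with a constant independent of $r$ and $\bar r$. It therefore suffices to prove the estimate in the case $r = \bar r$.

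\textbf{Duality at level $\bar r$.} Apply the inhomogeneous version of \Cref{pr:dualityforlower} at level $\bar r$ with the splitting $t_1 = s,\ p_1 = p,\ t_2 = 2\bar r - s,\ p_2 = p'$. The hypotheses $(1-\bar r) = \Lambda(1-s)$ and $1-s \leq 1/(2\Lambda)$ yield $t_2 = 1 - (2\Lambda-1)(1-s) \in [1/(2\Lambda),\, 1)$, so the splitting is admissible. This gives
\[
[f]_{\dot F^{\bar r}_{p,2}} \aleq \|f\|_{L^p} + \min\{\bar r,\,1-\bar r\}\, [f]_{\dot W^{s,p}}\, \sup_{g}\, [g]_{\dot W^{2\bar r - s,\, p'}},
\]
with the supremum taken over Schwartz $g$ such that $[g]_{\dot F^{\bar r}_{p',2}} \leq 1$ and $\supp \hat g \subset \{|\xi| \geq 1/4\}$. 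To estimate this supremum, I will invoke the already proven Sobolev bound \eqref{eq:sobolev1} applied to $g$ (with $p$ replaced by $p'$) at level $t_2 = 2\bar r - s \in (0, \bar r)$, using endpoints $0$ and $\bar r$:
\[
[g]_{\dot W^{2\bar r - s,\, p'}} \aleq (2\bar r - s)^{-1/p'}\,\|g\|_{L^{p'}} + (s-\bar r)^{-1/p'}\,[g]_{\dot F^{\bar r}_{p',2}}.
\]
Since $\supp \hat g \subset \{|\xi| \geq 1/4\}$ forces $\Delta_j g \equiv 0$ for $j \leq -4$, while $2^{-j\bar r}$ is uniformly bounded for $j \geq -3$, we get $\|g\|_{L^{p'}} \aleq [g]_{\dot F^{\bar r}_{p',2}} \leq 1$. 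Combining with $2\bar r - s \geq 1/(2\Lambda)$ and $s - \bar r = (\Lambda-1)(1-s)$, the right-hand side is $\aleq 1 + (1-s)^{-1/p'} \aleq (1-s)^{-1/p'}$, with constants depending on $\Lambda$ and $p$.

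\textbf{Putting it together.} Because $\Lambda(1-s) \leq 1/2$, we have $\bar r \geq 1/2$ and hence $\min\{\bar r,\,1-\bar r\} = 1-\bar r = \Lambda(1-s)$. Assembling the pieces,
\[
[f]_{\dot F^{\bar r}_{p,2}} \aleq \|f\|_{L^p} + \Lambda(1-s)\cdot (1-s)^{-1/p'}\, [f]_{\dot W^{s,p}} = \|f\|_{L^p} + \Lambda\,(1-s)^{1/p}\,[f]_{\dot W^{s,p}},
\]
since $1 - 1/p' = 1/p$. The main technical obstacle is the choice of duality parameters: a naive application of \Cref{pr:dualityforlower} at the original level $r$ with $t_2 = 2r - s$ would blow up whenever $r$ is close to $s/2$ (or fail entirely when $r < s/2$). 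The trick is to perform duality at the artificial endpoint $\bar r$ and exploit the gap $s - \bar r = (\Lambda-1)(1-s)$ controlled by $\Lambda > 1$, which keeps $2\bar r - s$ bounded away from $0$ while ensuring that the loss $(s-\bar r)^{-1/p'}$ is exactly cancelled by the prefactor $1 - \bar r = \Lambda (1-s)$.
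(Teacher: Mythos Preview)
Your proof is correct and follows essentially the same route as the paper: reduce to $r=\bar r$ by the low/high frequency split, apply the inhomogeneous case of \Cref{pr:dualityforlower} at level $\bar r$ with $t_1=s$, $t_2=2\bar r-s$, bound $[g]_{\dot W^{2\bar r-s,p'}}$ via \eqref{eq:sobolev1} with endpoints $0$ and $\bar r$, use the frequency support of $g$ to control the $L^{p'}$ term, and collect the powers of $1-s$. The only cosmetic difference is that the paper bounds $2\bar r-s\ge 1-s$ whereas you use $2\bar r-s\ge 1/(2\Lambda)$; either works since the constant may depend on $\Lambda$.
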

\begin{proof}
Since for $r \in [0, \bar{r}]$,
\[
 [f]_{\dot{F}^{r}_{p,2}} \leq [f]_{\dot F^{0}_{p,2}} + [f]_{\dot{F}^{\bar{r}}_{p,2}} \aleq \|f\|_{L^p} + [f]_{\dot{F}^{\bar{r}}_{p,2}}
\]
it suffices to prove the proposition when $r = \bar{r}$.
From \Cref{pr:dualityforlower} we have 
\[
 [f]_{\dot{F}^{\bar{r}}_{{p,2}}(\R^N)} \aleq \|f\|_{L^p(\R^N)} + \min\{\bar{r},(1-\bar{r})\} [f]_{{\dot{W}}^{s,p}(\R^N)} \sup_{\substack{[g]_{\dot{F}^{\bar{r}}_{{p'},2}} \leq 1\\ \text{supp} \mathcal{F}g \subset \{|\xi| \geq 1/4\}}} [g]_{{\dot{W}}^{2\bar{r}-s,p'}(\R^N)}.
\]
Now from \eqref{eq:sobolev1}, since $\frac{1}{2} s < \bar{r} < s$, which implies $0 < 2\bar{r}-s < \bar{r}$, we find for any $g \in \Sw(\R^N)$ with $\mathcal{F}g$ supported in $\{|\xi| \geq 1/4\}$,
\[
\begin{split}
 [g]_{{\dot{W}}^{2\bar{r}-s,p'}(\R^N)}  \aleq &\frac{1}{(2\bar{r}-s)^{\frac{1}{p}}}\, [g]_{\dot{F}^{0}_{{p',}2}(\R^N)} + \frac{1}{(s-\bar{r})^{\frac{1}{p}}}\, [g]_{\dot{F}^{\bar{r}}_{{p',}2}(\R^N)} \\
 \leq &\left( \frac{1}{(2\bar{r}-s)^{\frac{1}{p'}}} + \frac{1}{(s-\bar{r})^{\frac{1}{p'}}} \right)  [g]_{\dot{F}^{\bar{r}}_{{p',}2}(\R^N)}.
% \\ \aleq &\frac{1}{(2\bar{r}-s)^{\frac{1}{p'}}} + \frac{1}{(s-\bar{r})^{\frac{1}{p'}(\R^N)}}.
 \end{split}
\]
Here the support condition on $\mathcal{F}g$ guarantees that $[g]_{\dot{F}^{0}_{{p',}2}} \aleq [g]_{\dot{F}^{\bar{r}}_{{p',}2}}$.
So we arrive at 
\[
 [f]_{\dot{F}^{\bar{r}}_{{p,}2}(\R^N)} \aleq \|f\|_{L^p(\R^N)} + \min\{\bar{r},(1-\bar{r})\} \brac{\frac{1}{(2\bar{r}-s)^{\frac{1}{p'}}} + \frac{1}{(s-\bar{r})^{\frac{1}{p'}}}} [f]_{{\dot{W}}^{s,p}(\R^N)}
\]
Now we have 
\[
 \min\{\bar{r},(1-\bar{r})\} \aleq (1-s), \quad
 s-\bar{r} = (\Lambda -1) (1-s) \quad \text{and}
\quad 
 2\bar{r} -s = 2-2\Lambda (1-s)-s \geq 1-s.
 \]
So 
\[
 \min\{\bar{r},(1-\bar{r})\} \brac{\frac{1}{(2\bar{r}-s)^{\frac{1}{p'}}} + \frac{1}{(s-\bar{r})^{\frac{1}{p'}}}} \aleq \frac{1-s}{(1-s)^{\frac{1}{p'}}} = (1-s)^{\frac{1}{p}}.
\]
This establishes the claim of the proposition.
\end{proof}

\section{Strong Convergence as \texorpdfstring{$s \to 1$}{s to 1}: Proof of Corollary~\ref{co:strongconv}}\label{s:co:strongconv}
\begin{proof}[Proof of \Cref{co:strongconv}]
Let $p \in (1,\infty)$, assume that $f_k \in \Sw(\R^N)$ such that 
\[
 f_k \rightharpoonup f \quad \text{weakly in $L^p(\R^N)$ as $k \to \infty$}.
\]
Let $(s_k)_{k \in \N} \subset (0,1)$ such that $s_k \uparrow 1$ and assume that
\begin{equation}\tag{\ref{eq:co:strongconv435}}
\Lambda := \sup_{k} \brac{\|f_k\|_{L^p(\R^N)} + (1-s_k)^{\frac{1}{p}} [f_k]_{\dot W^{s_k,p}(\R^N)}} < \infty.
\end{equation}
First we claim that 
\begin{equation}\label{eq:bbm2:conv:1}
 \limsup_{k \to \infty} \|f_k\|_{L^p(\R^N)} + [f_k]_{\dot{F}^{r}_{p,2}(\R^N)} \aleq \Lambda \quad \forall r \in (0,1).
\end{equation}
with constant independent of $r$. If $p \leq 2$ this follows easily from \eqref{eq:mainFsp2:1}, but the following proof, using \eqref{eq:sobolev2} instead, works for all $p \in (1, \infty)$. Up to removing finitely many sequence elements, we may assume that $(1-s_k) < \frac{1}{4}$ for all $k \in \N$. From \eqref{eq:sobolev2} we have for any $r<1-2(1-s_k)$,
% \item Let $\Lambda > 1$, $s \in (0,1)$ such that $(1-s) \leq \frac{1}{2\Lambda}$. Take $\bar{r} \in (0,s)$ such that $(1-\bar{r})=\Lambda (1-s)$. Then for any $r \leq \bar{r}$.
\[
 [f_k]_{\dot{F}^{r}_{p,2}(\R^N)} \leq C  \brac{\|f_k\|_{L^p(\R^N)} + (1-s_k)^{\frac{1}{p}} [f_k]_{{\dot{W}}^{s_k,p}(\R^N)}} \leq C\, \Lambda.
\]
Since $s_k \xrightarrow{k \to \infty} 1$, this proves \eqref{eq:bbm2:conv:1}.

%In view of the classifications of Triebel-Lizorkin spaces, see e.g. \cite[\textsection 2.1.2, p.14]{RS96}, this implies in particular
%\begin{equation}\label{eq:bbm2:conv:2}
% \limsup_{k \to \infty} \|f_k\|_{L^p(\R^N)} + \|\laps{r} f_k\| \aleq C \Lambda \quad \forall r \in (0,1).
%\end{equation}
In view of \Cref{la:triebelconv} we deduce from \eqref{eq:bbm2:conv:1} that $f \in L^p(\R^N)$ and $[f]_{\dot{F}^{1}_{{p,}2}(\R^N)} < \infty$ with
\[
 \|f\|_{L^p(\R^N)} + [f]_{\dot{F}^{1}_{{p,}2}(\R^N)} \aleq \Lambda.
\]
In view of {\Cref{la:LittlewoodPaley},}
%the classifications of Triebel-Lizorkin spaces, see e.g. \cite[\textsection 2.1.2, p.14]{RS96}, 
we conclude that $f \in H^{1,p}(\R^N)$ and
\[
 \|f\|_{L^p(\R^N)} + \|\nabla f\|_{L^p(\R^N)} \aleq \Lambda.
\]
The locally strong convergence of $f_k \to f$ in $H^{t,p}$ for any $t \in (0,1)$ follows from Rellich's Theorem. 
More precisely, fix $0<t < r < 1$ and a ball $B(0,R)$ for some $R > 0$. 
Denote by $\eta \in C_c^\infty(B(0,2R))$, $\eta \equiv 1$ in $B(0,R)$ any usual bump function. We then have by \eqref{eq:bbm2:conv:1}
\[
 \sup_{k \in \N}  \|\eta f_k \|_{L^p(\R^N)} + \|\laps{r} (\eta f_k) \|_{L^p(\R^N)} \aleq \sup_{k \in \N}  \|f_k\|_{L^p(\R^N)} + \|\laps{r} f_k \|_{L^p(\R^N)} \aleq \Lambda < \infty.
\]
Here we have used the Coifman--McIntosh--Meyer commutator estimate for
\[
 [\eta ,\laps{\tilde{t}}] (g) := \eta \laps{\tilde{t}} g - \laps{\tilde{t}} (\eta g),
\]
which implies that for any $\tilde{t} \in (0,1)$
\[
 \|[\eta ,\laps{\tilde{t}}] (g)\|_{L^p(\R^N)} \aleq \brac{\|\eta\|_{L^\infty} + [\eta]_{\lip}}\, \|g\|_{L^p(\R^N)}.
\]
For an overview of these commutator estimates see, e.g., \cite{LS20}.

Then, $\eta f_k$ has uniformly compact support and is uniformly bounded in $H^{r,p}(\R^N)$ and thus, up to taking a subsequence, converges strongly in $H^{t,p}(\R^N)$ (this can be either proven via the usual Rellich--Kondrachov argument, or by interpolation theory). That is, after passing to a subsequence (which we denote by $f_{n_k}$)
\begin{equation}\label{eq:bbm2:conv:2253}
 \lim_{k \to \infty} \|\eta f_{n_k} -\eta f\|_{L^p(\R^N)}  + \|\laps{t} (\eta f_{n_k} -\eta f)\|_{L^p(\R^N)} =0.
\end{equation}
Repeating this argument for different balls $B(0,\Gamma R)$ (extracting subsequence again if necessary) we obtain that
\begin{equation}\label{eq:bbm2:conv:2254}
 \lim_{k \to \infty} \|f_{n_k}-f\|_{L^p(B(0,\Gamma R))} = 0 \quad \forall \Gamma > 0.
\end{equation}
Now we have 
\[
\begin{split}
 &\|\laps{t} f_{n_k}-\laps{t} f\|_{L^p(B(0,R))}\\
 \leq &\|\eta\laps{t} (f_{n_k}-f)\|_{L^p(B(0,R))} \\
 \leq& \|\laps{t} (\eta f_{n_k}- \eta f)\|_{L^p(B(0,R))} +  \|[\eta,\laps{t}] (f_{n_k}- f)\|_{L^p(B(0,R))}\\
 \leq& \|\laps{t} (\eta f_{n_k}- \eta f)\|_{L^p(B(0,R))} +  \|[\eta,\laps{t}] (\chi_{B(0,\Gamma R)}f_{n_k}- \chi_{B(0,\Gamma R)}f)\|_{L^p(B(0,R))} \\
 &+ \left \|[\eta,\laps{t}] (\chi_{B(0,\Gamma R)^c}f_{n_k}- \chi_{B(0,\Gamma R)^c}f) \right \|_{L^p(B(0,R))}.
 \end{split}
\]
By \eqref{eq:bbm2:conv:2253} we have 
\[
 \lim_{k \to \infty} \|\laps{t} (\eta f_{n_k}- \eta f)\|_{L^p(B(0,R))} = 0.
\]
By the Coifman--McIntosh--Meyer estimate and then \eqref{eq:bbm2:conv:2254} we have 
\[
 \lim_{k \to \infty}\|[\eta,\laps{t}] (\chi_{B(0,\Gamma R)}f_{n_k}- \chi_{B(0,\Gamma R)}f)\|_{L^p(B(0,R))} \leq C(\eta) \lim_{k \to \infty} \|f_{n_k}-f\|_{L^p(B(0,\Gamma R)} = 0.
\]
Lastly, observe that since $\eta \chi_{B(0,\Gamma R)^c} \equiv 0$,
\[
\left \|[\eta,\laps{t}] (\chi_{B(0,\Gamma R)^c}f_{n_k}- \chi_{B(0,\Gamma R)^c}f) \right \|_{L^p(B(0,R))}\leq \left \|\laps{t} \brac{\chi_{B(0,\Gamma R)^c}(f_{n_k}-f)} \right \|_{L^p(B(0,R))}.
\]
for $t \in (0,1)$, $\Gamma > 2$, and $x \in B(0,R)$ from the integral representation of the fractional Laplacian $\laps{t}$ we find
\[
 |\laps{t} \brac{\chi_{B(0,\Gamma R)^c}(f_{n_k}-f)}(x)| \leq  C(t) \int_{B(0,\Gamma R)^c} \frac{|f_{n_k}(y)-f(y)|}{|x-y|^{N+t}}\, dy \aleq (\Gamma R)^{-t-\frac{N}{p}} \|f_{n_k}-f\|_{L^p(\R^N)}
\]
Consequently, for any $\Gamma > 2$,
\[
 \lim_{k \to \infty} \left \|[\eta,\laps{t}] (\chi_{B(0,\Gamma R)^c}f_{n_k}- \chi_{B(0,\Gamma R)^c}f) \right \|_{L^p(B(0,R))} \aleq R^{-t} \Gamma^{-t-\frac{N}{p}} \Lambda.
\]
We conclude that for any $\Gamma > 2$,
\[
\begin{split}
 \lim_{k \to \infty} \|\laps{t} f_{n_k}-\laps{t} f\|_{L^p(B(0,R))}\aleq R^{-t} \Gamma^{-t-\frac{N}{p}} \Lambda.
 \end{split}
\]
Taking $\Gamma \to \infty$ we conclude
\[
 \lim_{k \to \infty} \|\laps{t} f_{n_k}-\laps{t} f\|_{L^p(B(0,R))} = 0.
\]
This holds for any $R > 0$ and thus in particular for any compact set $K \subset \R^N$
\[
 \lim_{k \to \infty} \|\laps{t} f_{n_k}-\laps{t} f\|_{L^p(K)} = 0.
\]
Since the weak limit $f$ is unique, we can apply this argument to any subsequence of $(f_k)_{k \in \N}$, and find that actually
%\[
% \limsup_{k \to \infty} \|\laps{t} f_k-\laps{t} f\|_{L^p(K)} = 0.
%\]
%That is
\[
  \lim_{k \to \infty} \|\laps{t} f_k-\laps{t} f\|_{L^p(K)} = 0.
\]
This implies \eqref{eq:co:strongconv4386}. 
%\ipolam{I reworded the proof above, so that people won't think we wrongly thought $$\liminf_{k \to \infty} (a_k + b_k + c_k) \leq \liminf_{k \to \infty} a_k + \liminf_{k \to \infty} b_k + \liminf_{k \to \infty} c_k.$$}
As for \eqref{eq:co:strongconv43862}, from Sobolev embedding one finds that for any $0 < \tilde{t} < t$ and $\tilde{K} \subset K$ both compact with $\dist(\tilde{K},\partial K) > 0$ we have 
\[
 [f_k-f]_{{\dot{W}}^{\tilde{t},p}(\tilde{K})} \leq C(t,\tilde{t},p,K,\tilde{K},N) \brac{ \|\laps{t} (f_k- f)\|_{L^p(K)} + \|f_k-f\|_{L^p(K)}}.
\]
So we conclude \eqref{eq:co:strongconv43862} as well.
\end{proof}

\appendix 
\section{Proof of {the BBM formula on \texorpdfstring{$\R^N$}{RN}}%global norm convergence
}\label{s:bbmglobaleq}
For the convenience of the reader we give here the proof of the following {BBM formula on $\R^N$:

\begin{theorem}
For $1 < p < \infty$ and $f \in L^p(\R^N)$, one has 
\begin{equation} \label{eq:BBMRN}
 \|\nabla f\|_{L^p(\R^N)}  = \left( \frac{p}{k(p,N)} \right)^{1/p} \lim_{s \to 1^-} (1-s)^{\frac{1}{p}} [f]_{\dot W^{s,p}(\R^N)}
\end{equation}
in the sense that the left hand side of the equality is finite if and only if the right hand side is finite, in which case the two sides are equal. In fact, we have $f \in H^{1,p}(\R^N)$ as long as $\liminf_{s \to 1^-} (1-s)^{\frac{1}{p}} [f]_{\dot{W}^{s,p}(\R^N)} < \infty$.
\end{theorem}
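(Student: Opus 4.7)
The plan is to prove two implications separately: (I) if $f \in H^{1,p}(\R^N)$, then the limit exists and equals $(k(p,N)/p)^{1/p}\|\nabla f\|_{L^p(\R^N)}$; and (II) if $\liminf_{s\to 1^-}(1-s)^{1/p}[f]_{\dot W^{s,p}(\R^N)}<\infty$, then $f\in H^{1,p}(\R^N)$. The first gives the non-trivial value of the limit whenever it is finite; the second gives the ``only if'' part of the finiteness equivalence, so the two together imply \eqref{eq:BBMRN} and the last sentence of the theorem.

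For (I), I would first handle $g\in C_c^\infty(\R^N)$ by reducing to the bounded-domain BBM formula \eqref{eq:BBMglobaleq}. Pick $R$ with $\supp g\subset B_R$ and apply \eqref{eq:BBMglobaleq} on the smooth domain $B_{2R}$: since $\nabla g\equiv 0$ off $B_R$ one has $\|\nabla g\|_{L^p(B_{2R})}=\|\nabla g\|_{L^p(\R^N)}$, while the discrepancy between the full-space and restricted Gagliardo seminorms simplifies, using $g|_{B_{2R}^c}\equiv 0$, to
\[
[g]_{\dot W^{s,p}(\R^N)}^p-[g]_{\dot W^{s,p}(B_{2R})}^p
= 2\int_{B_R}|g(x)|^p\int_{B_{2R}^c}\frac{dy}{|x-y|^{N+sp}}\,dx
\aleq \frac{R^{-sp}}{sp}\,\|g\|_{L^p(\R^N)}^p,
\]
which vanishes once multiplied by $(1-s)$ as $s\to 1^-$. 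To pass from $C_c^\infty(\R^N)$ to arbitrary $f\in H^{1,p}(\R^N)$, I would approximate $f$ in $H^{1,p}$ by $g_n\in C_c^\infty(\R^N)$ and invoke \Cref{co:fracBBMupper}, extended from $\Sw$ to $H^{1,p}(\R^N)$ by density, to get the uniform bound
\[
\sup_{s\in(1/2,1)}(1-s)^{1/p}[f-g_n]_{\dot W^{s,p}(\R^N)}
\aleq \|f-g_n\|_{H^{1,p}(\R^N)}\xrightarrow{n\to\infty}0.
\]
A standard three-$\varepsilon$ argument, combining this tail control with the formula for each $g_n$ and the continuity $\|\nabla g_n\|_{L^p}\to\|\nabla f\|_{L^p}$, then yields \eqref{eq:BBMRN} for $f$.

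For (II), choose $s_k\uparrow 1$ with $(1-s_k)^{1/p}[f]_{\dot W^{s_k,p}(\R^N)}\le M$, fix any $\Lambda>1$, and apply \eqref{eq:sobolev2} from \Cref{th:sobolev}: for $k$ sufficiently large,
\[
[f]_{\dot F^{r}_{p,2}(\R^N)}
\aleq \|f\|_{L^p(\R^N)}+M
\qquad\text{for every }r\in[0,\,1-\Lambda(1-s_k)].
\]
As $k\to\infty$ the admissible range of $r$ exhausts $[0,1)$, so Fatou's lemma applied inside the Littlewood-Paley square function gives $[f]_{\dot F^{1}_{p,2}(\R^N)}\aleq \|f\|_{L^p(\R^N)}+M$, and \Cref{la:LittlewoodPaley} then identifies $f$ as an element of $H^{1,p}(\R^N)$. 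The direction (I) applied afterwards promotes the $\liminf$ to a genuine limit and produces the identity.

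The main obstacle is that \eqref{eq:sobolev2} (and similarly \Cref{co:fracBBMupper}) is stated only for Schwartz functions, whereas in (II) we must apply it to an $f$ known only to lie in $L^p(\R^N)\cap\dot W^{s_k,p}(\R^N)$. I would dispose of this by a density argument: approximate $f$ in $W^{s_k,p}(\R^N)$ by Schwartz functions $f_n$ (mollification followed by smooth truncation, which converges in this fractional space), apply \eqref{eq:sobolev2} to each $f_n$, and pass to the limit via the lower semicontinuity of $[\,\cdot\,]_{\dot F^{r}_{p,2}(\R^N)}$ under $L^p$-convergence. This semicontinuity in turn follows from the $L^p$-boundedness of each Littlewood-Paley projection $\Delta_j$, which guarantees $\Delta_j f_n\to \Delta_j f$ almost everywhere along a subsequence, combined with Fatou's lemma applied inside the square function.
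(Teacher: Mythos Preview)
Your proposal is correct, and part (I) is close in spirit to the paper's argument, but part (II) proceeds along a genuinely different route.

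For (I), the paper works with $f\in C^1\cap H^{1,p}(\R^N)$ rather than $C_c^\infty(\R^N)$, so it cannot simply discard the tail; it instead controls the difference $[f]_{\dot W^{s,p}(\R^N)}-[f]_{\dot W^{s,p}(B_R)}$ by splitting into $|x-y|\le R/4$ (handled via the maximal-function pointwise bound $|f(x)-f(y)|\lesssim |x-y|(\mathcal M|\nabla f|(x)+\mathcal M|\nabla f|(y))$) and $|x-y|\ge R/4$ (handled by the $L^p$-norm of $f$). Your choice of $C_c^\infty$ makes this step shorter. For the passage to general $H^{1,p}$, the paper does not extend \Cref{co:fracBBMupper} by density; it proves the uniform bound $(1-s)^{1/p}[f]_{\dot W^{s,p}(\R^N)}\lesssim \|f\|_{H^{1,p}}$ directly for $f\in H^{1,p}$ using the difference-quotient estimate $\int |f(x+h)-f(x)|^p\,dx\le |h|^p\|\nabla f\|_{L^p}^p$, then runs the same three-$\varepsilon$ argument you describe.

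For (II), the paper avoids Triebel--Lizorkin spaces entirely: it simply observes that $\liminf_{s\to 1^-}(1-s)^{1/p}[f]_{\dot W^{s,p}(\Omega)}\le A$ for every bounded smooth $\Omega$, invokes the bounded-domain result of \cite{BBM01} to get $f\in H^{1,p}(\Omega)$ with $\|\nabla f\|_{L^p(\Omega)}\le (p/k(p,N))^{1/p}A$, and exhausts $\R^N$. This is more self-contained (it does not use \Cref{th:sobolev}) and yields the sharp constant in the lower bound for free. Your route through \eqref{eq:sobolev2} and Fatou in the square function is correct (you could also cite \Cref{la:triebelconv} with the constant sequence $f_k=f$), and has the virtue of illustrating the paper's own machinery; the density step you flag is indeed routine, since $f_n\to f$ in $L^p$ gives $\Delta_j f_n\to \Delta_j f$ in $L^p$ and Fatou handles the rest.
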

% \eqref{eq:BBMglobaleq} which follows from \cite{BBM01}.
}
\begin{proof}
{\textbf{Step 1.} First, we establish \eqref{eq:BBMRN} for $f \in C^1 \cap H^{1,p}(\R^N)$.} Let $R \geq 100$ and $s \in [\frac{1}{2},1)$. Then
\[
\begin{split}
 \abs{(1-s)^{\frac{1}{p}} [f]_{{\dot{W}}^{s,p}(\R^N)} - (1-s)^{\frac{1}{p}} [f]_{{\dot{W}}^{s,p}(B(R))}} \aleq& (1-s)^{\frac{1}{p}}\brac{ \int_{\R^N \backslash B(R)}\int_{\R^N}  \frac{|f(x)-f(y)|^p}{|x-y|^{N+sp}}\, dx\, dy}^{\frac{1}{p}}\\
\aleq&(1-s)^{\frac{1}{p}}\brac{ \int_{\R^N \backslash B(R)} \int_{|x-y| \leq \frac{1}{4}R} \frac{|f(x)-f(y)|^p}{|x-y|^{N+sp}}\, dx\, dy}^{\frac{1}{p}}\\
&+(1-s)^{\frac{1}{p}}\brac{ \int_{\R^N \backslash B(R)} \int_{|x-y| \geq \frac{1}{4} R} \frac{|f(x)|^p+|f(y)|^p}{|x-y|^{N+sp}}\, dx\, dy}^{\frac{1}{p}}.
 \end{split}
\]
We observe for the second term
\[
 (1-s)^{\frac{1}{p}}\brac{ \int_{\R^N \backslash B(R)} \int_{|x-y| \geq \frac{1}{4}R} \frac{|f(x)|^p+ |f(y)|^p}{|x-y|^{N+sp}}\, dx\, dy}^{\frac{1}{p}} \aleq \brac{\frac{(1-s)}{s}}^{\frac{1}{p}}\,  R^{-s} \|f\|_{L^p(\R^N)}.
\]
For the first term, we use
\[|f(x)-f(y)| \aleq |x-y| \brac{\mathcal{M}_{2|x-y|}|\nabla f(x)| + \mathcal{M}_{2|x-y|}|\nabla f(y)|},\] 
where
\[
 \mathcal{M}_{r} g(x) := \sup_{\sigma \in (0,r)} \mvint_{B(x,\sigma)} |g(z)|\, dz
\]
is the centered maximal function, cf. \cite{BH93,H96}. Then we have 
\[
\begin{split}
 &(1-s)^{\frac{1}{p}}\brac{ \int_{\R^N \backslash B(R)} \int_{|x-y| \leq \frac{1}{4} R} \frac{|f(x)-f(y)|^p}{|x-y|^{N+sp}}\, dx\, dy}^{\frac{1}{p}}\\
 \aleq&(1-s)^{\frac{1}{p}}\brac{ \int_{\R^N \backslash B(3R/4)}  \brac{\mathcal{M}_{R/2} |\nabla f(z)|}^p dz\, \int_{|w| \leq R} \frac{1}{|w|^{N+(s-1)p}}\, dw}^{\frac{1}{p}}\\
 \aleq& R^{1-s} \brac{ \int_{\R^N \backslash B(R/4)}  |\nabla f(z)|^p dz}^{\frac{1}{p}}
 \end{split}
\]
In the last step we used the maximal theorem. That is, we have shown that for any $s \in [\frac{1}{2},1)$
\begin{equation}\label{eq:bbmlocal:23}
 \abs{(1-s)^{\frac{1}{p}} [f]_{{\dot{W}}^{s,p}(\R^N)} - (1-s)^{\frac{1}{p}} [f]_{{\dot{W}}^{s,p}(B(R))}} \aleq R^{1-s} \|\nabla f\|_{L^p(\R^N \setminus B(R/4))} + \brac{1-s}^{\frac{1}{p}}\,  R^{-s} \|f\|_{L^p(\R^N)}.
\end{equation}
Now we can conclude from the local case in \cite{BBM01}; recall that in \cite[Corollary 2]{BBM01} it is proven that for any $R > 0$ 
\begin{equation}\label{eq:bbmlocal}
  \|\nabla f\|_{L^p(B(R))}  = \left( \frac{p}{k(p,N)} \right)^{1/p} \lim_{s \to 1^-} (1-s)^{\frac{1}{p}} [f]_{{\dot{W}}^{s,p}(B(R))},
\end{equation}
where $k(p,N) := \int_{\S^{N-1}} |e \cdot \omega|^p d\omega$ and $e$ is any unit vector in $\R^N$. Fix $\eps > 0$. Since $\nabla f \in L^p(\R^N)$ there must be a large radius $R>0$ such that 
\begin{equation}\label{eq:bbmlocal:24}
 \|\nabla f\|_{L^p(\R^N \backslash B(R/4))} < \eps.
\end{equation}
Then
\begin{align*}
 &\abs{\left( \frac{p}{k(p,N)} \right)^{1/p} (1-s)^{\frac{1}{p}} [f]_{{\dot{W}}^{s,p}(\R^N)} - \|\nabla f\|_{L^p(\R^N)}} \\
\leq & \,  \eps + \abs{\left( \frac{p}{k(p,N)} \right)^{1/p} (1-s)^{\frac{1}{p}} [f]_{{\dot{W}}^{s,p}(\R^N)} - \|\nabla f\|_{L^p(B(R))}} \\
\aleq &  \, \eps + R^{1-s} \|\nabla f\|_{L^p(\R^N \setminus B(R/4))} + \brac{1-s}^{\frac{1}{p}}\,  R^{-s} \|f\|_{L^p(\R^N)} + \abs{\left( \frac{p}{k(p,N)} \right)^{1/p} (1-s)^{\frac{1}{p}} [f]_{{\dot{W}}^{s,p}(B(R))} - \|\nabla f\|_{L^p(B(R))}} \\
\leq &  \, (R^{1-s} + 1) \eps + \brac{1-s}^{\frac{1}{p}}\,  R^{-s} \|f\|_{L^p(\R^N)} + \abs{\left( \frac{p}{k(p,N)} \right)^{1/p} (1-s)^{\frac{1}{p}} [f]_{{\dot{W}}^{s,p}(B(R))} - \|\nabla f\|_{L^p(B(R))}}
\end{align*}
where the first and the third inequality follows from \eqref{eq:bbmlocal:24} and the second inequality follows from  \eqref{eq:bbmlocal:23}. Since $R$ is fixed once $\eps$ is fixed, we may let $s \to 1^-$ and use \eqref{eq:bbmlocal}. This shows
\[
\limsup_{s \to 1^-} \abs{\left( \frac{p}{k(p,N)} \right)^{1/p} (1-s)^{\frac{1}{p}} [f]_{{\dot{W}}^{s,p}(\R^N)} - \|\nabla f\|_{L^p(\R^N)}} \aleq \eps,
\]
but since $\eps > 0$ is arbitrary, this proves
\[
\left( \frac{p}{k(p,N)} \right)^{1/p} \lim_{s \to 1^-} (1-s)^{\frac{1}{p}} [f]_{{\dot{W}}^{s,p}(\R^N)} = \|\nabla f\|_{L^p(\R^N)}.
\]

{\textbf{Step 2.} Next, assume $f \in H^{1,p}(\R^N)$. We show that
\begin{equation} \label{eq:liminf_upperbdd}
\limsup_{s \to 1^-} (1-s)^{1/p} [f]_{\dot{W}^{s,p}(\R^N)} \leq \left( \frac{k(p,N)}{p} \right)^{1/p} \|\nabla f\|_{L^p(\R^N)}.
\end{equation}
We first observe that for $f \in H^{1,p}(\R^N)$ and $s \in [1/2,1)$,
\[
\begin{split}
[f]_{\dot{W}^{s,p}(\R^N)} &= \left( \int_{\R^N} \int_{\R^N} \frac{|f(x+h)-f(x)|^p}{|h|^{N+sp}} dx dh \right)^{1/p} \\
&\leq \left( \int_{|h| \leq 1} \int_{\R^N} \frac{|f(x+h)-f(x)|^p}{|h|^p} dx \frac{|h|^{(1-s)p}}{|h|^{N}} dh \right)^{1/p} \\
& \qquad + \left( \int_{|h| > 1} \int_{\R^N} (|f(x+h)|^p + |f(x)|^p) dx \frac{1}{|h|^{N+sp}} dh \right)^{1/p}.
\end{split}
\]
We appeal to the facts that 
\[
\int_{\R^N} |f(x+h)-f(x)|^p dx \leq |h|^p \|\nabla f\|_{L^p(\R^N)}^p
\]
(see \cite[Proposition 9.3]{Brezis_book}) and that $\int_{\R^N} |f(x+h)|^p + |f(x)|^p dx = 2 \|f\|_{L^p(\R^N)}^p$. Thus
\[
\begin{split}
[f]_{\dot{W}^{s,p}(\R^N)} &\lesssim \left(\int_{|h| \leq 1} \frac{|h|^{(1-s)p}}{|h|^{N}}  dh \right)^{1/p} \|\nabla f\|_{L^p(\R^N)} + \left(\int_{|h| > 1} \frac{1}{|h|^{N+sp}} dh \right)^{1/p} \|f\|_{L^p(\R^N)} \\
&\lesssim \frac{1}{(1-s)^{1/p}} \|\nabla f\|_{L^p(\R^N)} + \frac{1}{s^{1/p}} \|f\|_{L^p(\R^N)}
\end{split}
\]
which implies
\begin{equation} \label{eq:generalsupest}
\sup_{1/2 \leq s < 1} (1-s)^{1/p} [f]_{\dot{W}^{s,p}(\R^N)} \leq C \|f\|_{H^{1,p}(\R^N)}
\end{equation}
for some constant $C = C_{p,N}$, whenever $f \in H^{1,p}(\R^N)$ (note that \eqref{eq:generalsupest} strengthens the second conclusion of \Cref{co:fracBBMupper} by weakening the hypothesis on $f$ from $f \in \Sw(\R^N)$ to $f \in H^{1,p}(\R^N)$).
To proceed further, for any $\varepsilon > 0$, pick $g \in C^1 \cap H^{1,p}(\R^N)$ so that $\|f-g\|_{H^{1,p}(\R^N)} < \varepsilon$. Then 
\[
\begin{split}
(1-s)^{1/p} [f]_{\dot{W}^{s,p}(\R^N)} & \leq (1-s)^{1/p}  [g]_{\dot{W}^{s,p}(\R^N)} +  (1-s)^{1/p} [f-g]_{\dot{W}^{s,p}(\R^N)} \\
&\leq (1-s)^{1/p} [g]_{\dot{W}^{s,p}(\R^N)} + C \varepsilon,
\end{split}
\]
where in the last inequality we applied \eqref{eq:generalsupest} to $f-g \in H^{1,p}(\R^N)$ in place of $f$.
Now recall \eqref{eq:BBMRN} has already been proved for $g \in C^1 \cap H^{1,p}(\R^N)$. As a result, letting $s \to 1^-$, we obtain
\[
\begin{split}
\limsup_{s \to 1^-} (1-s)^{1/p} [f]_{\dot{W}^{s,p}(\R^N)} & \leq  \left( \frac{k(p,N)}{p} \right)^{1/p} \|\nabla g\|_{L^p(\R^N)} + \varepsilon \\
&\leq \left( \frac{k(p,N)}{p} \right)^{1/p} (\|\nabla f\|_{L^p(\R^N)} + \varepsilon) + C \varepsilon.
\end{split}
\]
Since $\varepsilon > 0$ is arbitrary, \eqref{eq:liminf_upperbdd} follows.

\textbf{Step 3.} Finally, assume $f \in L^p(\R^N)$ and
\[
A := \liminf_{s \to 1^-} (1-s)^{1/p} [f]_{\dot{W}^{s,p}(\R^N)} < \infty. 
\]
It is known that then $f \in H^{1,p}(\R^N)$ and 
\begin{equation} \label{eq:liminf_lowerbdd}
\|\nabla f\|_{L^p(\R^N)} \leq \left(\frac{p}{k(p,N)}\right)^{1/p} A.
\end{equation}
In fact, then for every bounded smooth domain $\Omega \subset \R^N$, we have
\[
\liminf_{s \to 1^-} (1-s)^{1/p} [f]_{\dot{W}^{s,p}(\Omega)} \leq A < \infty,
\] 
so \cite[Theorem 2]{BBM01} (and its proof) shows that $f \in H^{1,p}(\Omega)$ with 
\[
\|\nabla f\|_{L^p(\Omega)} \leq \left(\frac{p}{k(p,N)}\right)^{1/p} A.
\]
Since $\Omega$ is an arbitrary bounded smooth domain in $\R^N$, this shows $f \in H^{1,p}(\R^N)$ and that \eqref{eq:liminf_lowerbdd} holds.
}
\end{proof}

\section{Proof of Corollary~\ref{co:sharpshob2}} \label{s:co:sharpshob2}
\begin{proof}[Proof of \Cref{co:sharpshob2}]
{Let $0 < s \leq t <1$ and $f \in \Sw(\R^N)$.} From \Cref{th:mainFspp} we have %for $s,t \in (0,1)$,
\[
 \min\{s,(1-s)\}^{\frac{1}{2}} [f]_{{\dot{W}}^{s,2}(\R^N)} \aleq [f]_{\dot{F}^{s}_{2,2}}
\]
and
\[
 [f]_{\dot{F}^{t}_{{2,}2}} \aleq \min\{t,(1-t)\}^{\frac{1}{2}} [f]_{{\dot{W}}^{t,2}(\R^N)}.
\]
The result now follows from the inequality $[f]_{\dot{F}^{s}_{2,2}} \aleq \|f\|_{L^2} + [f]_{\dot{F}^{t}_{2,2}}$ when $0 \leq s \leq t$.
% \begin{comment}
% So, for any $0 \leq s \leq t$,
% \[
% \begin{split}
%  [f]_{\dot{F}^{s}_{2,2}} \aleq&\brac{\int_{\R^N} \sum_{j \geq 0} 2^{2sj} |\Delta_j f(x)|^2 \ dx}^{\frac{1}{2}}
%  + \brac{\int_{\R^N} \sum_{j \leq 0} 2^{2sj} |\Delta_j f(x)|^2 \ dx}^{\frac{1}{2}}\\
%  \leq &\brac{\int_{\R^N} \sum_{j \geq 0} |\Delta_j f(x)|^2 \ dx}^{\frac{1}{2}}
%  + \brac{\int_{\R^N} \sum_{j \leq 0} 2^{2tj} |\Delta_j f(x)|^2 \ dx}^{\frac{1}{2}}\\
%  \aleq&\|f\|_{L^2}+[f]_{\dot{F}^{t,2}_2}.
%  \end{split}
% \]
% In the last inequality we have used the Littlewood-Paley theorem, cf. \Cref{la:LittlewoodPaley}.
% \end{comment}
\end{proof}

\bibliographystyle{abbrv}
\bibliography{bib}

\begin{thebibliography}{10}

\bibitem{AMV12}
R.~Alabern, J.~Mateu, and J.~Verdera.
\newblock A new characterization of {S}obolev spaces on {$\mathbb{R}^n$}.
\newblock {\em Math. Ann.}, 354(2):589--626, 2012.

\bibitem{Benedek61}
A.~Benedek and R.~Panzone.
\newblock The space {$L^{p}$}, with mixed norm.
\newblock {\em Duke Math. J.}, 28:301--324, 1961.

\bibitem{BH93}
B.~Bojarski and P.~Haj\l{}asz.
\newblock Pointwise inequalities for {S}obolev functions and some applications.
\newblock {\em Studia Math.}, 106(1):77--92, 1993.

\bibitem{BBM01}
J.~Bourgain, H.~Brezis, and P.~Mironescu.
\newblock Another look at {S}obolev spaces.
\newblock In {\em Optimal control and partial differential equations}, pages
  439--455. IOS, Amsterdam, 2001.

\bibitem{BPS16}
L.~Brasco, E.~Parini, and M.~Squassina.
\newblock Stability of variational eigenvalues for the fractional
  {$p$}-{L}aplacian.
\newblock {\em Discrete Contin. Dyn. Syst.}, 36(4):1813--1845, 2016.

\bibitem{Brezis_book}
H.~Brezis.
\newblock {\em Functional analysis, {S}obolev spaces and partial differential
  equations}.
\newblock Universitext. Springer, New York, 2011.

\bibitem{D2002}
J.~D\'{a}vila.
\newblock On an open question about functions of bounded variation.
\newblock {\em Calc. Var. Partial Differential Equations}, 15(4):519--527,
  2002.

\bibitem{dTGCV20}
F.~del Teso, D.~G\'{o}mez-Castro, and J.~L. V\'{a}zquez.
\newblock Estimates on translations and {T}aylor expansions in fractional
  {S}obolev spaces.
\newblock {\em Nonlinear Anal.}, 200:111995, 12, 2020.

\bibitem{Hitchhiker}
E.~Di~Nezza, G.~Palatucci, and E.~Valdinoci.
\newblock Hitchhiker's guide to the fractional {S}obolev spaces.
\newblock {\em Bull. Sci. Math.}, 136(5):521--573, 2012.

\bibitem{DM21}
O.~Dom\'inguez and M.~Milman.
\newblock {Bourgain-Brezis-Mironescu-Maz'ya-Shaposhnikova limit formulae for
  fractional Sobolev spaces via interapolation and extrapolation}.
\newblock {\em Preprint}, 2021.

\bibitem{F70}
C.~Fefferman.
\newblock Inequalities for strongly singular convolution operators.
\newblock {\em Acta Math.}, 124:9--36, 1970.

\bibitem{Gmodern14}
L.~Grafakos.
\newblock {\em Modern {F}ourier analysis}, volume 250 of {\em Graduate Texts in
  Mathematics}.
\newblock Springer, New York, third edition, 2014.

\bibitem{GY21}
Q.~Gu and P.-L. Yung.
\newblock A new formula for the {$L^p$} norm.
\newblock {\em J. Funct. Anal.}, 281(4):Paper No. 109075, 19, 2021.

\bibitem{H96}
P.~Haj\l{}asz.
\newblock Sobolev spaces on an arbitrary metric space.
\newblock {\em Potential Anal.}, 5(4):403--415, 1996.

\bibitem{Milman2}
G.~E. Karadzhov, M.~Milman, and J.~Xiao.
\newblock Limits of higher-order {B}esov spaces and sharp reiteration theorems.
\newblock {\em J. Funct. Anal.}, 221(2):323--339, 2005.

\bibitem{LS20}
E.~Lenzmann and A.~Schikorra.
\newblock Sharp commutator estimates via harmonic extensions.
\newblock {\em Nonlinear Anal.}, 193:111375, 37, 2020.

\bibitem{LS11}
G.~Leoni and D.~Spector.
\newblock Characterization of {S}obolev and {$BV$} spaces.
\newblock {\em J. Funct. Anal.}, 261(10):2926--2958, 2011.

\bibitem{LS11b}
G.~Leoni and D.~Spector.
\newblock Corrigendum to ``{C}haracterization of {S}obolev and {$BV$} spaces''
  [{J}. {F}unct. {A}nal. 261 (10) (2011) 2926--2958].
\newblock {\em J. Funct. Anal.}, 266(2):1106--1114, 2014.

\bibitem{Mazya1}
V.~Maz'ya and T.~Shaposhnikova.
\newblock On the {B}ourgain, {B}rezis, and {M}ironescu theorem concerning
  limiting embeddings of fractional {S}obolev spaces.
\newblock {\em J. Funct. Anal.}, 195(2):230--238, 2002.

\bibitem{Mazya2}
V.~Maz'ya and T.~Shaposhnikova.
\newblock Erratum to: ``{O}n the {B}ourgain, {B}rezis and {M}ironescu theorem
  concerning limiting embeddings of fractional {S}obolev spaces'' [{J}.
  {F}unct. {A}nal. {\bf 195} (2002), no. 2, 230--238; {MR}1940355
  (2003j:46051)].
\newblock {\em J. Funct. Anal.}, 201(1):298--300, 2003.

\bibitem{Milman1}
M.~Milman.
\newblock Notes on limits of {S}obolev spaces and the continuity of
  interpolation scales.
\newblock {\em Trans. Amer. Math. Soc.}, 357(9):3425--3442, 2005.

\bibitem{Ponce04}
A.~C. Ponce.
\newblock A new approach to {S}obolev spaces and connections to
  {$\Gamma$}-convergence.
\newblock {\em Calc. Var. Partial Differential Equations}, 19(3):229--255,
  2004.

\bibitem{P19}
M.~Prats.
\newblock Measuring {T}riebel-{L}izorkin fractional smoothness on domains in
  terms of first-order differences.
\newblock {\em J. Lond. Math. Soc. (2)}, 100(2):692--716, 2019.

\bibitem{RS96}
T.~Runst and W.~Sickel.
\newblock {\em Sobolev spaces of fractional order, {N}emytskij operators, and
  nonlinear partial differential equations}, volume~3 of {\em De Gruyter Series
  in Nonlinear Analysis and Applications}.
\newblock Walter de Gruyter \& Co., Berlin, 1996.

\bibitem{Stein1961}
E.~M. Stein.
\newblock The characterization of functions arising as potentials.
\newblock {\em Bull. Amer. Math. Soc.}, 67(1):102--104, 01 1961.

\bibitem{MR0290095}
E.~M. Stein.
\newblock {\em Singular integrals and differentiability properties of
  functions}.
\newblock Princeton Mathematical Series, No. 30. Princeton University Press,
  Princeton, N.J., 1970.

\bibitem{MR1232192}
E.~M. Stein.
\newblock {\em Harmonic analysis: real-variable methods, orthogonality, and
  oscillatory integrals}, volume~43 of {\em Princeton Mathematical Series}.
\newblock Princeton University Press, Princeton, NJ, 1993.
\newblock With the assistance of Timothy S. Murphy, Monographs in Harmonic
  Analysis, III.

\bibitem{T83}
H.~Triebel.
\newblock {\em Theory of function spaces}, volume~78 of {\em Monographs in
  Mathematics}.
\newblock Birkh\"{a}user Verlag, Basel, 1983.

\end{thebibliography}

\end{document}